\title{Wagoner's Complexes Revisited}
\author{Jeremias Epperlein}
\newtheorem{thm}{Theorem}[section]
\newtheorem{lem}[thm]{Lemma}
\newtheorem{defn}[thm]{Definition}
\newtheorem{exam}[thm]{Example}
\newtheorem{cor}[thm]{Corollary}
\newtheorem{rem}[thm]{Remark}
\newcommand{\setsep}{\:|\:}
\newcommand{\N}{\mathbb{N}}
\newcommand{\R}{\mathbb{R}}
\newcommand{\Z}{\mathbb{Z}}
\newcommand{\PP}{\mathcal{P}}
\newcommand{\NN}{\mathcal{N}}
\newcommand{\CG}{\mathcal{G}}
\newcommand{\CU}{\mathcal{U}}
\newcommand{\CR}{\mathcal{R}}
\newcommand{\eps}{\varepsilon}
\newcommand*{\simplex}{
   \raisebox{0pt}{%
   \begin{tikzpicture}[scale=0.28]
     \draw[] (0,0) -- (1,0) -- (1,1) -- (0,0);
   \end{tikzpicture}}\hspace{0.06cm}}
\DeclareMathOperator{\SSE}{\text{SSE}}
\DeclareMathOperator{\Aut}{Aut}
\DeclareMathOperator{\id}{id}
\DeclareMathOperator{\supp}{supp}
\newcommand{\loc}{\text{loc}}
\DeclareMathOperator{\source}{s}
\DeclareMathOperator{\target}{t}
\DeclareMathOperator{\pro}{\delta}
\DeclareMathOperator{\im}{im}
\DeclareMathOperator{\sgn}{sgn}
\begin{document}

\maketitle

\begin{abstract}
  We generalize Wagoner's representation of the automorphism group
  of a two-sided  subshifts of finite type as
  the fundamental group of a certain CW-complex to 
  groupoids having a certain refinement structure.
  This significantly streamlines the original proof and allows
  us to extend this construction to, e.g., the automorphism group
  of subshifts of finite type over arbitrary finitely generated groups
  and the automorphism group of $G$-SFTs.
\end{abstract}

\section{Introduction}
\label{sec:intro}

The classification of two-sided subshifts of finite type
up to topological conjugacy is a fundamental open problem in
symbolic dynamics. The basic starting point
for this is Williams seminal paper
\cite{williamsClassificationSubshiftsFinite1973}
where he showed that every topological conjugacy between
edge shifts can be decomposed into a series of state-splittings and
-amalgamations. This allows for a algebraic formulation of topological
conjugacy in the form of strong shift equivalence of the matrices
defining the subshifts. Every conjugacy
can be represented as a series of
elementary strong shift equivalences.
This decomposition however is non-unique.

In a series of papers
\cite{wagonerMarkovPartitionsK21987},
\cite{wagonerTriangleIdentitiesSymmetries1990},
\cite{wagonerEventualFiniteOrder1990},
\cite{wagonerHigherDimensionalShiftEquivalence1990}
Wagoner introduced certain CW-complexes (more precisely geometric
realizations of simplicial
complexes) to deal with this ambiguity.
One of these complexes is the
complex of strong shift equivalences
between $\{0,1\}$-matrices.
Every edge path in this complex corresponds to
a sequence of elementary strong shift equivalences
and two such paths are homotopic iff they
define the same conjugacy. The automorphism
group of a subshift of finite type
can thus be recovered as the fundamental group of this complex.
This can be used to construct homomorphism from this
automorphism group into simpler groups.
There are two main ways to do this.
By allowing a large set of matrices or gluing in more triangles
one can embed the complex in a larger one, thus getting
a homomorphism from one fundamental group to the other.
This is used for example in
\cite{wagonerEventualFiniteOrder1990}
to construct the dimension group representation in that way.

One can also define the homomorphism on elementary strong shift
equivalences based on the matrices describing them. Checking
well-definedness then has to be done only along
the boundary of the triangles in the complex.
This was done for the sign-gyration-compatibility-condition homomorphism
in \cite{kimAutomorphismsDimensionGroup1992}

The development culminated in the celebrated paper
\cite{kimWilliamsConjectureFalse1999}
by Kim and Roush
showing that shift equivalence does not imply strong shift
equivalence.
While there are multiple expositions (for example
\cite{boylePositiveKTheorySymbolic2002},
\cite{zbMATH05347058} and
\cite{wagonerStrongShiftEquivalence1999}) of this result
they all take the construction of the complexes as a black box
(\cite{wagonerStrongShiftEquivalence1999} gives the most details).

Wagoners investigation actually started with another, similarly defined complex,
which he called the space of Markov partitions.
We use the fact that (ordered) Markov partitions in this context
correspond to conjugacies to topological Markov shifts
to give a new and simplified definition of this space.

This has two advantages.
First of all it significantly shortens many of the proofs.
Additionally this construction makes sense
for any groupoid with a generating set of morphisms.
The complex corresponding
to the complex of strong shift equivalences
can then be interpreted as the nerve
of the groupoid with respect to the generating set.
This abstraction neatly isolates the assumptions
we need on the groupoid. We can also
apply this abstract result
to other situations in symbolic dynamics
and obtain an analogous complex
capturing conjugacies between
subshift of finite type over
finitely generated groups
and $G$-SFTs.

The paper is structured as follows.
We start in \Cref{sec:strong-shift-eq} by giving an exposition of the construction
in the classical case of two-sided
subshifts of finite type. There are no proofs
in this section but references to the corresponding
abstract versions of the results in the later sections.
Next we give a detailed description of the construction for arbitrary
groupoids in \Cref{sec:hug} and introduce a certain
refinement structure which allows us
to analyze the topological structure of our complex.
In \Cref{sec:markov-shifts-fg} we show
that we can construct such a structure
for Markov shifts over finitely generated groups.
The complex of elementary strong shift equivalences
is finally constructed
in \Cref{sec:matrix-eq} and there we show that it is isomorphic
to the complex constructed earlier.
In \Cref{sec:gsfts} we construct an analogous complex
for $G$-SFTs.
The next section \Cref{sec:degenerate} answers a question
of Boyle and Wagoner about the SSE complex with
possibly degenerate matrices.
Finally we show in \Cref{sec:contractability} that under certain additional restrictions
the analogue to the space of Markov partitions
we constructed is non only simply connected but even contractible.
For this we use a certain decomposition of the simplex
going back to Freudenthal which we cover in more detail in
\Cref{sec:freudenthal}.

\section{Strong shift equivalence}
\label{sec:strong-shift-eq}
Before we paint the picture in the abstract setting, we sketch the
way ahead in the familiar setting of two-sided subshifts of
finite time.
Let $A$ be a square $\{0,1\}$-matrix of size $n \times n$. We say that $A$ in
\emph{non-degenerate} if $A$ has no zero columns or rows.
We say that two such matrices $A$ and $B$
are \emph{elementarily strong shift equivalent}
if there are non-degenerate $\{0,1\}$-matrices $R,S$
with $A=RS$ and $B=SR$. The matrices $A$ and $B$ are
\emph{strong shift equivalent} if there
is a chain
of non-degenerate square matrices $A=C_1,C_2,\dots,C_n=B$
such that $C_{\ell}$ is elementarily strong shift equivalent to
$C_{\ell+1}$ for all $\ell \in \{1,\dots,n-1\}$.

Every non-degenerate square matrix defines a directed graph $G_A$ with
vertex set $\{1,\dots,n\}$ and edge set
$\{(i,j) \in \{1,\dots,n\}^2 \setsep A_{ij}=1\}$.  The set of
bi-infinite paths in this graph is called the \emph{vertex shift}
associated to $A$ and is denoted by $X_A$. Such shift spaces
are also called \emph{topological Markov shifts}. More precisely, we have
\[X_A = \{x \in \{1,\dots,n\}^\Z \setsep A_{x_\ell,x_{\ell+1}}=1
\text{ for all } \ell \in \Z\}.\]  Endowing $\{1,\dots,n\}$ with the
discrete topology and $\{1,\dots,n\}^\Z$ with the product topology
turns this into a metrizable, compact, totally disconnected space.
The left shift $\sigma$ defined by $\sigma(x)_\ell = x_{\ell+1}$ acts
continuously on it.

Two such vertex shifts $X_A$ and $X_B$ are \emph{topologically conjugate} to 
each other if there is a homeomorphism between them which commutes
with the shift.
To every elementary strong shift equivalence given by
matrices $R \in \{0,1\}^{m \times n} $ and $S \in \{0,1\}^{n \times m}$
between $A$ and $B$ one can associate
a map $\varphi_{R,S}: X_A \to X_B$ by
the condition that $\varphi_{R,S}(x)_i$ is
the unique element $a$ of $\{1,\dots,n\}$
such that $R_{x_i,a} S_{a,x_{i+1}}=A_{x_i,x_{i+1}}=1$.
This map clearly commutes with the shift and also turns out to be a
homeomorphism (see \Cref{thm:elementary-from-algebraic}).
We call such a conjugacy an \emph{elementary conjugacy}.

We will show that a conjugacy $\varphi$ is elementary if and only if
the value of $\varphi(x)_i$ only depends on $x_i$ and $x_{i+1}$
and the value of $\varphi^{-1}(y)_i$ only depends on $y_i$ and $y_{i-1}$
(see \Cref{thm:algebraic-from-elementary} and \ref{thm:elementary-from-algebraic}).
This characterization should be well-known but the author couldn't
find it precisely in the literature. However, it can be derived from
statements about Markov partitions in
\cite{wagonerStrongShiftEquivalence1999}
or the characterization of elementary conjugacies as
the composition of a out-splitting and
an in-amalgamation, see \cite[Proposition 7.2.11]{lindIntroductionSymbolicDynamics1995}

Williams showed that every conjugacy can be represented as a series
of elementary conjugacies and their inverses.
Hence $X_A$ and $X_B$ are conjugate whenever $A$ and $B$ are strongly
shift equivalent.

This representation however is non-unique. In order to cope with this,
we define a CW-complex of elementary conjugacies as follows. We start with isolated
points corresponding to vertex shifts
defined by non-degenerate square matrices as above. For every
elementary conjugacy between vertex shifts $X_A$ and $X_B$
we add a line segment. Notice that
there might be multiple such line segments between $X_A$ and $X_B$.
Furthermore for some line segments the start and end point might
coincide, i.e. there will be loops. For example the identity map
determines a loop at very vertex. Finally we glue a disk into
every triangle of the form 
\begin{center}
  \begin{tikzcd}
    & X_B \arrow[rd, "\varphi_2"] &\\
    X_A \arrow[ru, "\varphi_1"]
    \arrow[rr ,"\varphi_3" below]& & X_C
  \end{tikzcd}.
\end{center}
where $\varphi_1,\varphi_2$ and $\varphi_3$ are elementary conjugacies,
whenever the triangle commutes. Later we will also clue in higher
dimensional simplices, but in this section we are only interested in
the fundamental group of this space so this does not matter.

This construction resembles the construction of a nerve of a
category (see for example \cite{segalClassifyingSpacesSpectral1968},
a reference that is general enough to cover our case is
\cite[Definition 15.5]{kozlovCombinatorialAlgebraicTopology2008}), but notice that the elementary conjugacies are not closed
under composition. Now every homotopy class of paths between $X_A$ and
$X_B$ has a representative that is a concatenation of $m$ line
segments which we glued in and each of these line segments
corresponds to an elementary conjugacy $\varphi_i$ and a sign
$\varepsilon_i$ depending on the direction we pass through that
segment.  Then the concatenation $\varphi_n^{\varepsilon_n} \circ
\dots \circ \varphi_1^{\varepsilon_1}$ is a well defined conjugacy.
The result of this concatenation only depends on the homotopy
class of the path.
Thus we get a homomorphism from the fundamental group of our complex
based at $X_A$ to the automorphism group of $X_A$. 

Williams result from above
tells us that this homomorphism is surjective and Wagoner showed that
it is injective.

To do so,  Wagoner considered the space of Markov partitions.
A Markov partition of a space $X$ with a homeomorphism
$f: X \to X$ here means a partition of $X$ into finitely many clopen sets
$\CU=\{U_i \setsep i \in \{1,\dots,n\}\}$ such that for every
two-sided sequence
$(x_n)_{n \in \Z} \in \{1,\dots,n\}^\Z$ with
\begin{align*}
  U_{x_i} \cap f^{-1}(U_{x_{i+1}}) \neq \emptyset
\end{align*}
there is a unique element
in the intersection $\bigcap_{n \in \Z} f^{-n}(U_{x_n})$.
Every partition $\CU$ of $X$ into clopen sets gives rise
to
\begin{enumerate}[(a)]
\item 
  a matrix $A_\CU \in \{0,1\}^{n \times n}$ defined by
  $A_{i,j}=1$ iff $U_i \cap f^{-1}(U_j) \neq \emptyset$, and
\item
  to a continuous map
  from $X$ to the vertex shift $X_{A_{\CU}}$
  which maps the point $x$ to the sequence of partition elements
  that $x$ traverses, i.e. $x \mapsto (y_k)_{k \in \Z}$
  such that $f^k(x) \in U_{y_k}$ for all $k \in \Z$.
\end{enumerate}
A partition $\CU$ is a Markov partition 
iff this induced map is a conjugacy.
On the other hand every conjugacy
$\varphi: X_A \to X_B$ with $B \in \{0,1\}^{m \times m}$
induces a Markov partition $\CU$ of $X_A$
via $\CU=\{ \varphi^{-1}([k]_0) \setsep k \in \{1,\dots,n\}\}$
where $[k]_0 := \{x \in X_B \setsep x_0 = k\}$.
Thus there is a bijection between Markov partitions
of a vertex shift $X_A$ and conjugacies
from $X_A$ to other vertex shifts.

Hence we define the space of ordered Markov partitions of $X_A$ as follows.
We again build up our space from vertices, edges and triangles.
The vertices are conjugacies $X_A \to X_B$ for some vertex shift
$X_B$. We add an edge from $\varphi_1: X_A \to X_B$ to $\varphi_2: X_A
\to X_C$ whenever
$\varphi_2 \circ \varphi_1^{-1}: X_B \to X_C$ is an elementary
conjugacy. In this space
we can have at most two edges between vertices and
we have precisely one loop at every vertex as the idendity
is an elementary conjugacy.
This space of Markov partitions is a covering space
of the connected component of $X_A$ in the space we constructed
earlier (\Cref{thm:simply-connected}).

We will show that this space of Markov partitions is simply connected
(\Cref{thm:simply-connected} and \Cref{lem:refinment-ongroups}, in the later version we
also glue in higher simplices and this will make the space even
contractible, see \Cref{sec:contractability}). This
implies that two paths in the complex of
elementary conjugacies are homotopic iff
the corresponding conjugacies are the same, see \Cref{thm:covering}.

Finally we define yet another complex $\SSE(A,\{0,1\})$ whose vertices are
non-degenerate square $\{0,1\}$-matrices, whose edges correspond to
elementary strong shift equivalences and triangles correspond to
diagrams of the form
\begin{center}
  \begin{tikzcd}
    & B \arrow[rd, "{R_2, S_2}"] &\\
    A \arrow[ru, "{R_1, S_1}"]
    \arrow[rr ,"{R_3, S_3}" below]& & C
  \end{tikzcd}
\end{center}
where the following triangle equations
\begin{align*}
  R_1 R_2 &= R_3, \\
  R_2 S_3 &= S_1, \\
  S_3 R_1 &= S_2
\end{align*}
hold. We then show that the map sending a matrix to its corresponding
vertex shift and an elementary strong shift equivalence to the
corresponding elementary conjugacy induces a isomorphism between
these spaces (see \Cref{thm:simplicial-set-iso}).

This finally shows that the automorphism group of a vertex shift
can be represented as the fundamental group of such a very
algebraically defined topological space (see \Cref{cor:sse-automorphismgroup}).

\section{Homotopically Unique Generation}
\label{sec:hug}
Let $\Gamma$ be a groupoid. Denote its  objects by $\Gamma^0$ and
its morphisms by $\Gamma^1$. We say that $H \subseteq \Gamma^1$ \emph{generates}
$\Gamma$ if $H$ contains all identity morphisms and for every $\psi \in \Gamma^1$ there are $\varphi_1,\dots,\varphi_n \in H
\cup H^{-1}$
with $\psi=\varphi_n \circ \dots \circ \varphi_1$.
Notice that for morphisms in a groupoid we write composition
from right to left as is customary for function composition.
Denote by $s(\varphi)$ and $t(\varphi)$ the
source and target of the morphism $\varphi$.
We now define a $\Delta$-complex (see for
example \cite[Chapter 2.1]{hatcherAlgebraicTopology2002})
$\NN(\Gamma,H)$,
the \emph{nerve} of $(\Gamma ,H)$.
The $n$-simplices of $\NN(\Gamma,H)$ are 
ordered tuples of objects $(X_0,\dots,X_n)$
together with morphisms $(\varphi_{i,j})_{i \neq j}$ 
with $\varphi_{i,j}: X_i \to X_j$ such that
$\varphi_{j,k} \circ \varphi_{i,j} = \varphi_{i,k}$
whenever $i < j < k$. 

We will also write $\NN(\Gamma,H)$
for the geometric realization of $\NN(\Gamma,H)$
and this CW-complex is the object we will mainly talk
about. In the following it will be enough to think of $\NN(\Gamma,H)$
as a CW-complex.

Hence $\NN(\Gamma,\Gamma^1)$ is precisely the usual notion of
a nerve.

\begin{rem}
	An $n$-simplex in $\NN(\Gamma,H)$ is given by a sequence $\varphi_{1,2}, \dots,
	\varphi_{n-1,n}$ of composable morphisms in $H$ such that
	$\varphi_{\ell,\ell+1}\circ\dots\circ\varphi_{k,k+1}$ is
	again in $H$ for all $k,\ell$ with $k<\ell$.
\end{rem}

\begin{defn}
	We say that $H$ \emph{homotopically uniquely generates} $\Gamma$
	if $H$ generates $\Gamma$ and every
	two sequences $\varphi_1,\dots,\varphi_n$ and 
	$\varphi'_1,\dots,\varphi_{n'}'$ in $H \cup H^{-1}$ with
	$\varphi_1 \circ \dots \circ \varphi_n=\varphi'_1 \circ \dots \circ \varphi'_{n'}$
	are homotopic as paths in $\NN(\Gamma,H)$.
\end{defn}

Based on $(\Gamma,H)$ we can now define for every object $X \in \Gamma^0$
a $\Delta$-complex $\PP(\Gamma,H,X)$. We will later see
that this is the universal covering space for $\NN(\Gamma,H)$.

The vertices of $\PP(\Gamma,H,X)$ are morphisms $\varphi: X \to Y$.
The $n$-simplices of $\PP(\Gamma,H,X)$ are tuples $(\varphi_0,\dots,\varphi_n)$
such that for every $i,j$ we have $\varphi_j \circ  \varphi_i^{-1} \in
H$. To simplify notation we write $\varphi_1 \to \varphi_2$ if
$\varphi_2 \circ \varphi_1^{-1} \in H$ and $\varphi_1 - \varphi_2$
if either $\varphi_1 \to \varphi_2$ or $\varphi_2 \to \varphi_1$.

\begin{lem}\label{lem:lifting-triangles} 
	Let $\varphi_1,\varphi_2,\varphi_3$ be morphisms in $\Gamma$ with
   source $X$ such that 
	\begin{center}
		\begin{tikzcd}
			& \varphi_2 \arrow[rd] &\\
			\varphi_1 \arrow[ru] \arrow[rr]& & \varphi_3
		\end{tikzcd}.
	\end{center}
	is a triangle in $\PP(\Gamma,H,X)$. Then 
	\begin{center}
		\begin{tikzcd}
			& \target(\varphi_2) 
			\arrow[rd, "\varphi_3 \circ \varphi_2^{-1}"] &\\
			\target(\varphi_1) 
			\arrow[ru, "\varphi_2 \circ \varphi_1^{-1}"] 
			\arrow[rr ,"\varphi_3 \circ \varphi_1^{-1}" below]& & \target(\varphi_3)
		\end{tikzcd}.
	\end{center}
	is a triangle in $\NN(\Gamma,H)$.
\end{lem}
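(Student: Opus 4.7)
The plan is to unwind the definitions on both sides and check that the three stated morphisms are the edges of a genuine $2$-simplex in $\NN(\Gamma,H)$. A triangle there demands two things: (i) each of the three edges is a morphism in $H$, and (ii) the cocycle condition $\varphi_{j,k}\circ\varphi_{i,j}=\varphi_{i,k}$ for $0\le i<j<k\le 2$ holds.

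For (i), recall that in $\PP(\Gamma,H,X)$ an $n$-simplex $(\varphi_0,\dots,\varphi_n)$ is a tuple with $\varphi_j\circ\varphi_i^{-1}\in H$ for all $i\neq j$. Applying this to the hypothesized triangle on the vertices $\varphi_1,\varphi_2,\varphi_3$ directly gives $\varphi_2\circ\varphi_1^{-1}$, $\varphi_3\circ\varphi_2^{-1}$, and $\varphi_3\circ\varphi_1^{-1}$ in $H$, which are exactly the three edges of the proposed simplex in $\NN(\Gamma,H)$. Their sources and targets also match the diagram, since $\varphi_i\colon X\to\target(\varphi_i)$.

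For (ii), the only nontrivial composition is
\[
(\varphi_3\circ\varphi_2^{-1})\circ(\varphi_2\circ\varphi_1^{-1})=\varphi_3\circ\varphi_1^{-1},
\]
which follows immediately from associativity in the groupoid together with $\varphi_2^{-1}\circ\varphi_2=\id_{X}$. Hence the diagram is a triangle in $\NN(\Gamma,H)$.

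There is essentially no obstacle: the lemma is a translation between the two combinatorial descriptions, and the groupoid axioms do all the work. The only thing worth flagging is the bookkeeping convention that composition is written right-to-left, so that the edge of the nerve going from $\target(\varphi_i)$ to $\target(\varphi_j)$ is literally the morphism $\varphi_j\circ\varphi_i^{-1}$, and the cocycle condition written in this order is compatible with the $i<j<k$ indexing of $n$-simplices in $\NN(\Gamma,H)$.
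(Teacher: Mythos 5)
Your proof is correct and follows the same line as the paper's: read off from the definition of $\PP(\Gamma,H,X)$ that each of $\varphi_2\circ\varphi_1^{-1}$, $\varphi_3\circ\varphi_2^{-1}$, $\varphi_3\circ\varphi_1^{-1}$ lies in $H$, then verify the cocycle condition by the single identity $(\varphi_3\circ\varphi_2^{-1})\circ(\varphi_2\circ\varphi_1^{-1})=\varphi_3\circ\varphi_1^{-1}$. The paper gives this in two sentences; yours is a more spelled-out version of the same argument.
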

\begin{proof}
	The edges in this triangle are $1$-simplices in $\NN(\Gamma,H)$ by the 
	definition of $\PP(\Gamma,H,X)$. The
	triangle is a $2$-simplex since $(\varphi_3 \circ \varphi_2^{-1}) \circ (\varphi_2 \circ \varphi_1^{-1}) = \varphi_3 \circ \varphi_1^{-1}$. 
\end{proof}

Denote by $\pi_1(\NN(\Gamma,H))$ the
``combinatorial'' fundamental groupoid of $\NN(\Gamma,H)$
whose objects are vertices in this space (i.e. objects of $\Gamma$)
and whose morphisms are homotopy classes of paths. A more precise
but also more cumbersome notation would be $\pi_1(\NN(\Gamma,H),\Gamma^0)$.

\begin{thm}
  \label{thm:covering}
  The space $\PP(\Gamma,H,X)$ is a covering space of $\NN(\Gamma,H)$ for
  every $X$ via the covering map defined by
  $[\varphi_1,\dots,\varphi_n] \mapsto
  [t(\varphi_1),\dots,t(\varphi_n)]$.

  If $\PP(\Gamma,H,X)$ is simply
  connected then the map
  \[\omega: \pi_1(\NN(\Gamma,H)) \to \Gamma, (\varphi_1, \dots,
    \varphi_n) \mapsto \varphi_n \circ \dots \circ \varphi_1\] is an
  isomorphism.  In particular, in that case $H$ homotopically uniquely generates
  $\Gamma$.
\end{thm}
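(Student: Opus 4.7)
The plan is to address the two parts in turn and reduce everything to standard covering-space machinery. For the covering assertion, I would first check that $[\varphi_0,\dots,\varphi_n] \mapsto [t(\varphi_0),\dots,t(\varphi_n)]$ is a well-defined simplicial map of $\Delta$-complexes: edges go to edges by the definition of $\PP(\Gamma,H,X)$, triangles go to triangles by \Cref{lem:lifting-triangles}, and the analogous computation handles higher simplices. The crucial step is then the standard local criterion for a simplicial covering: for every vertex $\varphi : X \to Y$ of $\PP(\Gamma,H,X)$, the open star of $\varphi$ should map bijectively onto the open star of $Y$ in $\NN(\Gamma,H)$. Given a simplex in $\NN(\Gamma,H)$ with vertices $Y_0,\dots,Y_n$, $Y_k = Y$, and edges $\psi_{i,j}\in H$, the only possible lift containing $\varphi$ is the tuple with vertices $\varphi_i = \psi_{k,i} \circ \varphi$; conversely these vertices really do form a simplex of $\PP(\Gamma,H,X)$ since $\varphi_j \circ \varphi_i^{-1} = \psi_{i,j} \in H$. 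This existence-and-uniqueness of lifts gives the required bijection of open stars.

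For the second part, assume $\PP(\Gamma,H,X)$ is simply connected. The functor $\omega$ is well-defined on homotopy classes because every $2$-simplex of $\NN(\Gamma,H)$ encodes a relation $\varphi_{j,k}\circ\varphi_{i,j} = \varphi_{i,k}$, so the boundary of each $2$-cell maps to an identity in $\Gamma$; surjectivity onto morphisms with source $X$ is the generation hypothesis on $H$. For injectivity I would lift any edge-path $p = \varphi_1^{\varepsilon_1}\cdots\varphi_n^{\varepsilon_n}$ in $\NN(\Gamma,H)$ starting at $X$ to the unique edge-path in $\PP(\Gamma,H,X)$ starting at $\id_X$; by construction the consecutive vertices of the lift are the partial composites $\id_X,\, \varphi_1^{\varepsilon_1},\, \varphi_2^{\varepsilon_2}\circ\varphi_1^{\varepsilon_1},\dots$, so its endpoint is the vertex $\omega(p)\colon X\to Y$ of $\PP(\Gamma,H,X)$. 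If $\omega(p)=\omega(q)$ then the two lifts share endpoints in the simply connected $\PP(\Gamma,H,X)$, so they are homotopic rel endpoints, and pushing the homotopy down through the covering map gives $p\simeq q$. The \HUG\ statement is then immediate: two sequences in $H\cup H^{-1}$ with equal composition are precisely two edge-paths with $\omega(p)=\omega(q)$, and the argument just given makes them homotopic in $\NN(\Gamma,H)$.

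The main obstacle I expect is notational rather than conceptual: one must carefully track the signs $\varepsilon_i$ encoding the orientation in which each $1$-simplex is traversed, and confirm that the lifted path in $\PP(\Gamma,H,X)$ actually terminates at the vertex $\omega(p)$ rather than some symmetric variant. Once the conventions are pinned down and \Cref{lem:lifting-triangles} is in hand, the rest is the usual dictionary between covering spaces, path lifting, and fundamental groupoids.
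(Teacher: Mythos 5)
Your argument is correct, and while the overall structure (lift a path to $\PP(\Gamma,H,X)$, exploit simple connectivity there, descend to $\NN(\Gamma,H)$) matches the paper, there are two worthwhile differences. First, you actually verify the covering-space assertion via the open-star criterion and an explicit existence-and-uniqueness-of-lifts argument; the paper's proof simply asserts the covering claim and never checks it, jumping straight to the isomorphism. Your version therefore fills a real gap in the exposition. Second, for injectivity, after lifting and triangulating the loop in $\PP(\Gamma,H,X)$ the paper pushes the nullhomotopy down by hand: each $2$-simplex of the triangulation is mapped to a $2$-simplex of $\NN(\Gamma,H)$ using \Cref{lem:lifting-triangles}, yielding a combinatorial nullhomotopy downstairs. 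You instead invoke the general fact that a covering map sends a path-homotopy rel endpoints in the total space to one in the base. Both are sound; the paper's route avoids leaning on the covering property (which it never proved) and is self-contained at the simplicial level, while yours is shorter once the covering claim is established. One small point to tighten: when you describe ``the only possible lift'' of a simplex $(Y_0,\dots,Y_n)$ through a vertex $\varphi$ over $Y_k$, you should note that for $i<k$ the morphism $\psi_{k,i}$ is $\psi_{i,k}^{-1}$, so $\varphi_i=\psi_{i,k}^{-1}\circ\varphi$; the verification $\varphi_j\circ\varphi_i^{-1}=\psi_{i,j}\in H$ then still holds for $i<j$ precisely by the cocycle condition defining simplices of $\NN(\Gamma,H)$.
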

\begin{proof}
	The map $\omega$ is well defined since it is well defined 
	for the boundary of the triangles we glued in.
	The map is surjective since $H \cup H^{-1}$ is a generating set.
   Let $X,Y$ be objects in $\Gamma$.
   Since $H \cup H^{-1}$ is a generating set, $\Gamma$ is connected,
   so there is a morphism $\psi: X \to Y$ in $\Gamma^1$.
   This morphism induces an isomorphism of
   $\Delta$-complexes $\PP(\Gamma,H,X) \to \PP(\Gamma,H,Y)$
   via \[[\varphi_1,\dots,\varphi_n] \mapsto [\varphi_1
   \circ \psi^{-1}, \dots,\varphi_n \circ
   \psi^{-1}].\] Thus $\PP(\Gamma,H,X)$ is
   simply connected for all $X$ under the assumptions
   of the theorem.
   To see that $\omega$ is injective, let $(\varphi_1,\dots,\varphi_n)$ be a
   path in $\NN(\Gamma,H)$ such
   $\varphi_n \circ \dots \circ \varphi_1=\id$ in $\Gamma$.  This path
   lifts to the path
   $\varphi_1 - \varphi_2 \circ \varphi_1 - \dots - (\varphi_n
   \circ \dots\circ \varphi_1)$ in $\PP(\Gamma,H,X)$ which starts and
   ends in $\source(\varphi_1)=\target(\varphi_n)$.
   Since $\PP(\Gamma,H,X)$ is simply connected, this loop can be
   triangulated.  Now every triangle in $\PP(\Gamma,H,X)$ maps by
   \Cref{lem:lifting-triangles} to a triangle in $H$, hence the loop
   $(\varphi_1, \dots, \varphi_n)$ is contractible in $\NN(\Gamma,H)$.
\end{proof}

We are now looking for conditions under which $\PP(\Gamma,H,X)$ is simply
connected.

\begin{defn}
  Let $\Gamma$ be a groupoid and $H$ a generating set of $\Gamma$
  containing all identities.  Let $\cong$ be an equivalence relation
  between the morphisms of $\Gamma$, let $H_n \subseteq H^n$ be sets
  and let $\pro_n: H_n \to H$ be maps (we often drop the index $n$
  and simply write $\delta$ when the number of arguments is clear). We call the triple
  \[(\cong, (H_n)_{n \in \N}, (\pro_n)_{n \in \N})\] a \emph{refinement
    structure for $(\Gamma, H)$} if the following hold (all statements
  only have to hold if terms involving $\delta_n$ are actually
  defined, that is, if its arguments are in $H_n$).
  \begin{description}
 \item[Equivalent morphisms are exchangeable.]
    \begin{align}
      \varphi_1 \cong \varphi_2 \text{ and }
      \varphi_3 \cong \varphi_4  \text{ and }
      \varphi_1 \to \varphi_3
      &\implies \varphi_2 \to \varphi_4 
        \label{eq:conditions-equivalent}
    \end{align}
  \item[Refining one set doesn't change it.]
    \begin{align}
      H_1&=H \label{eq:conditions-triv-H} \\
      \delta(\varphi)&\cong\varphi \label{eq:conditions-triv-delta}\
    \end{align}
  \item[Permutations of the arguments don't matter.] Let $\kappa$ be a
    permutation of $\{1,\dots,n\}$.
    \begin{align}
      (\varphi_1,\dots,\varphi_n) \in H_n
      &\implies (\varphi_{\kappa(1)},\dots,\varphi_{\kappa(n)}) \in H_n
        \label{eq:conditions-permutation-H}  \\
      \delta_n(\varphi_1,\dots,\varphi_n)
      &\cong
        \delta_n(\varphi_{\kappa(1)},\dots,\varphi_{\kappa(n)}) \in H_n
        \label{eq:conditions-permutation-delta}
    \end{align}
  \item[Refinement allows grouping.]
    \begin{align}
      (\delta_n(\varphi^1_1,\dots,\varphi^1_n),\dots,
      \delta_n(\varphi^k_1,\dots,\varphi^k_{n}))\in H_k
      &
        \iff
        (\varphi_1^1,\dots,\varphi_n^1,
        \dots,
        \varphi_1^k,\dots,\varphi_n^k)
        \in H_{kn}
        \label{eq:conditions-grouping-H}\\
      \delta_k(\delta_n(\varphi^1_1,\dots,\varphi^1_n),
      \dots,
      \delta_n(\varphi^k_1,\dots,\varphi^k_{n}))
      &\cong
        \delta_{kn}(
        \varphi_1^1,\dots,\varphi_n^1,
        \dots,\varphi_1^k,\dots,\varphi_n^k)
        \label{eq:conditions-grouping-delta}
    \end{align}
  \item[Redundant arguments can be dropped.]
    \begin{align}
      (\varphi_{1},\varphi_1,\dots,\varphi_{n}) \in H_{n+1}
      &\iff (\varphi_1,\dots,\varphi_n) \in H_n
        \label{eq:conditions-drop-H}\\
      \delta_{n+1}(
      \varphi_1,\varphi_1,
      \varphi_2,\dots,\varphi_n)
      &\cong
        \delta_n(\varphi_1,\varphi_2,\dots,\varphi_n)
        \label{eq:conditions-drop-delta} 
    \end{align}
  \item[Arrows allow refinement.]
    \begin{align}
        %
      \varphi_1 \leftarrow \varphi_2 \rightarrow \varphi_3
      &\implies (\varphi_1,\varphi_2,\varphi_3) \in H_3
        \label{eq:conditions-arrow-2-H} \\
      \varphi_1 \rightarrow \varphi_2 \leftarrow \varphi_3
      &\implies (\varphi_1,\varphi_2,\varphi_3) \in H_3
        \label{eq:conditions-arrow-3-H} \\
      \varphi_1 \rightarrow \varphi_2 \text{ and }
      \varphi_3 \rightarrow \varphi_4
      &\implies
        \delta_2(\varphi_1,\varphi_3) \to \delta_2(\varphi_2,\varphi_4)
        \label{eq:conditions-arrow-delta}
    \end{align}
  \end{description}
\end{defn}

A consequence of these properties, which will we need later, is the
following.
\begin{lem}
  \label{lem:arrow-right-grouping}
  If there exits a refinement structure for $(\Gamma,H)$
  and $\psi_1,\dots,\psi_n \to \varphi$,
  then $(\psi_1,\dots,\psi_n, \varphi) \in H_{n+1}$
  and $\delta_{n+1}(\psi_1,\dots,\psi_n,\varphi) \to \varphi$.
\end{lem}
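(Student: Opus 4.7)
The plan is to argue by induction on $n$. For the base case $n = 1$, I would combine the given arrow $\psi_1 \to \varphi$ with the trivial arrow $\varphi \to \varphi$: the arrow-3 condition gives $(\psi_1, \varphi, \varphi) \in H_3$, and a permutation together with drop-H yields $(\psi_1, \varphi) \in H_2$. Applying arrow-delta to $\psi_1 \to \varphi$ and $\varphi \to \varphi$ produces $\delta_2(\psi_1, \varphi) \to \delta_2(\varphi, \varphi)$, and collapsing the right-hand side via drop-delta and triv-delta and then rerouting through the equivalence condition gives $\delta_2(\psi_1, \varphi) \to \varphi$.

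For the inductive step $n - 1 \Rightarrow n$, I would first use the induction hypothesis to fuse $\psi_1, \ldots, \psi_{n-1}$ and $\varphi$ into a single morphism $\xi := \delta_n(\psi_1, \ldots, \psi_{n-1}, \varphi)$ with $\xi \to \varphi$, and then handle the remaining $\psi_n$ symmetrically by introducing the auxiliary morphisms $\xi' := \delta_n(\varphi, \ldots, \varphi)$ and $\psi_n' := \delta_n(\psi_n, \ldots, \psi_n)$. Both are well defined because the constant tuples lie in $H_n$ by iterated drop-H, and iterated drop-delta together with triv-delta gives $\xi' \cong \varphi$ and $\psi_n' \cong \psi_n$. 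The equivalence condition then transports the arrows into $\xi \to \xi'$ and $\psi_n' \to \xi'$, arrow-3 furnishes $(\xi, \xi', \psi_n') \in H_3$, and a single reverse drop-H with a permutation upgrades this to $(\xi, \xi', \psi_n', \xi') \in H_4$.

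Grouping-H with $k = 4$ and inner arity $n$ now rewrites $(\xi, \xi', \psi_n', \xi') \in H_4$ as membership of a $4n$-tuple in $H_{4n}$; a systematic cascade of permutations and duplicate drops of $\varphi$ and $\psi_n$ collapses that tuple to $(\psi_1, \ldots, \psi_n, \varphi) \in H_{n+1}$. For the refinement arrow I would nest two $\delta_2$ layers: the base case applied to $\xi \to \xi'$ and to $\psi_n' \to \xi'$ gives $\delta_2(\xi, \xi') \to \xi'$ and $\delta_2(\psi_n', \xi') \to \xi'$, while grouping-H promotes the above $H_4$-membership to $(\delta_2(\xi, \xi'), \delta_2(\psi_n', \xi')) \in H_2$. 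A last use of arrow-delta then produces $\delta_2(\delta_2(\xi, \xi'), \delta_2(\psi_n', \xi')) \to \delta_2(\xi', \xi') \cong \xi' \cong \varphi$, and grouping-delta combined with the same permutation/drop cascade on the $\delta$-side identifies the left-hand side with $\delta_{n+1}(\psi_1, \ldots, \psi_n, \varphi)$, so one final appeal to the equivalence condition closes the induction.

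The main obstacle is that arrow-delta is only stated for $\delta_2$, so an $n$-ary refinement arrow must be assembled by nesting two $\delta_2$ layers via grouping-delta rather than invoked directly; a secondary bookkeeping annoyance is that $\cong$ does not substitute into $H_n$-membership or inside $\delta_n$, so every equivalence has to be channeled through the equivalence condition at the level of arrows between individual morphisms, which is why the auxiliary $\xi'$ and $\psi_n'$ are introduced in the first place.
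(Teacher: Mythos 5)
Your proof is correct and follows the same inductive strategy as the paper: fuse $\psi_1,\dots,\psi_{n-1},\varphi$ via $\delta$ using the inductive hypothesis, then combine the result with $\psi_n$ through the arrow, permutation, drop, and grouping axioms. Your version is more careful than the paper's terse proof (which in fact has a few visible slips, e.g.\ $H_{n+1}$ where $H_{n+2}$ is meant and a citation of \eqref{eq:conditions-arrow-2-H} where \eqref{eq:conditions-arrow-3-H} is needed), and your auxiliary morphisms $\xi'$ and $\psi_n'$ are a clean workaround for the fact that the axioms do not directly license substituting $\cong$-equivalent morphisms into $\delta$-arguments or into $H_n$-membership.
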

\begin{proof}
  We show this by induction on $n$, the case $n=0$ being nothing more then
  \eqref{eq:conditions-triv-delta}.
  Assume we showed the assertion for $n$.
  By \eqref{eq:conditions-arrow-2-H} and \eqref{eq:conditions-triv-delta} we know that
  \[(\delta(\psi_{n+1}),\delta(\psi_1,\dots,\psi_n, \varphi)) \in
  H_2\]
  hence by \eqref{eq:conditions-drop-H} and
  \eqref{eq:conditions-permutation-H}
  also $(\psi_1,\dots,\psi_{n+1},\varphi) \in H_{n+1}$.
  Similarly \eqref{eq:conditions-arrow-delta},
  \eqref{eq:conditions-permutation-delta},
  \eqref{eq:conditions-drop-delta} and \eqref{eq:conditions-triv-delta}
  imply that $\delta(\psi_1,\dots,\psi_{n+1},\varphi)=\delta(\psi_{n+1},\delta(\psi_1,\dots,\psi_n,\varphi)) \to \delta(\varphi,\varphi)=\varphi$.
\end{proof}

We are now ready to proof the main abstract result in this paper.
The proof closely follows that of Wagoner and Badoian in \cite{badoianSimpleConnectivityMarkov2000}.
\begin{thm}\label{thm:simply-connected}
  If there exists a refinement structure for $(\Gamma,H)$
  then $\PP(\Gamma,H,X)$ is simply connected for all objects $X$
  and therefore $H$ homotopically uniquely generates $\Gamma$.
\end{thm}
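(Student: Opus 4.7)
The plan is to show that $\PP(\Gamma,H,X)$ is simply connected; the HUG conclusion then follows immediately from \Cref{thm:covering}. Since $\PP(\Gamma,H,X)$ is a $\Delta$-complex, it suffices to null-homotope every combinatorial edge loop
\[
\gamma:\ \varphi_0 - \varphi_1 - \cdots - \varphi_n = \varphi_0.
\]

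Following Wagoner and Badoian \cite{badoianSimpleConnectivityMarkov2000}, I would construct a single morphism $\psi$ with source $X$ that simultaneously refines every vertex of $\gamma$, in the sense that $\psi \to \varphi_i$ for every $i$. Given such a $\psi$, each triple $(\psi, \varphi_i, \varphi_{i+1})$ carries $H$-arrows between every pair---two from the defining property of $\psi$ and the third from the edge of $\gamma$---so it spans a $2$-simplex of $\PP(\Gamma,H,X)$. These $2$-simplices assemble into a cone with apex $\psi$ that bounds $\gamma$, yielding the desired contraction.

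To produce $\psi$, I would proceed inductively along the loop. Set $\psi_0 := \varphi_0$, which satisfies $\psi_0 \to \varphi_0$ via the identity. Given $\psi_k$ with $\psi_k \to \varphi_i$ for all $i \le k$, form $\psi_{k+1} := \delta(\psi_k, \varphi_k, \varphi_{k+1})$. Admissibility of this tuple in $H_3$ is guaranteed, depending on the orientation of the edge $\varphi_k - \varphi_{k+1}$, by the wedge configuration $\psi_k \to \varphi_k \leftarrow \varphi_{k+1}$ or $\varphi_{k+1} \to \varphi_k \leftarrow \psi_k$ via \eqref{eq:conditions-arrow-2-H} or \eqref{eq:conditions-arrow-3-H}; the new arrows $\psi_{k+1} \to \varphi_k$ and $\psi_{k+1} \to \varphi_{k+1}$ then follow from \Cref{lem:arrow-right-grouping} after a permutation allowed by \eqref{eq:conditions-permutation-H} and \eqref{eq:conditions-permutation-delta}.

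The main obstacle is propagating the older invariants: transitivity of $\to$ is not automatic, so to deduce $\psi_{k+1} \to \varphi_i$ for $i < k$ from $\psi_k \to \varphi_i$ one cannot simply chain the two arrows. Instead, I would enlarge the $\delta$-tuple for $\psi_{k+1}$ by an application of the grouping axioms \eqref{eq:conditions-grouping-H} and \eqref{eq:conditions-grouping-delta}, using the monotonicity axiom \eqref{eq:conditions-arrow-delta} and the equivalence axiom \eqref{eq:conditions-equivalent} to rewrite $\psi_{k+1}$ up to $\cong$ as $\delta$ of a richer list in which $\varphi_i$ appears as an absorbed target that \Cref{lem:arrow-right-grouping} can consume. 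The combinatorial bookkeeping required to match the hypotheses of these axioms to the concrete configuration at each step is the delicate part of the argument; once the invariant $\psi_{k+1} \to \varphi_i$ is maintained through all $n$ inductive steps, the cone with apex $\psi := \psi_{n-1}$ completes the contraction.
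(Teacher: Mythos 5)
Your plan is to build a single apex $\psi$ with $\psi \to \varphi_i$ for every vertex $\varphi_i$ of the loop and then cone the loop off that apex. This is a genuinely different route from the paper, which never produces a common apex in one shot: instead it repeatedly replaces the whole loop by the loop of $\delta$'s of sliding windows of consecutive vertices, enlarging the windows until every vertex is $\cong$ to $\delta(\varphi_1,\dots,\varphi_n)$, and only then cones. That iterated scheme is not a stylistic choice --- it is forced, and the gap in your argument is exactly where it bites.

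The invariant $\psi_k \to \varphi_i$ for $i\le k$ cannot be maintained, already at $k=1$. With $\psi_0=\varphi_0$ you get $\psi_1 \cong \delta(\varphi_0,\varphi_1)$. If the edge is $\varphi_0 \to \varphi_1$, \Cref{lem:arrow-right-grouping} gives $\delta(\varphi_0,\varphi_1)\to\varphi_1$, and the axioms give $\varphi_0 \to \delta(\varphi_0,\varphi_1)$ (via \eqref{eq:conditions-arrow-delta} with $\varphi_0\to\varphi_0$ and $\varphi_0\to\varphi_1$), but they do \emph{not} give $\delta(\varphi_0,\varphi_1)\to\varphi_0$. In the concrete Markov instantiation this is false outright: $\delta(\varphi_0,\varphi_1)\circ\varphi_0^{-1}$ has the neighborhood of $\varphi_1\circ\varphi_0^{-1}$, which is $S$, whereas membership in $H$ requires the inverse to have neighborhood $S^{-1}$. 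More generally, in the Markov case $\delta(\varphi_1,\dots,\varphi_n)\to\varphi_i$ forces every $\varphi_j\to\varphi_i$, which never holds around a nontrivial loop. There is a related error in your $H_3$ step: when the loop edge is $\varphi_k\to\varphi_{k+1}$, the triple $(\psi_k,\varphi_k,\varphi_{k+1})$ sits in a \emph{chain} $\psi_k\to\varphi_k\to\varphi_{k+1}$, not in either wedge configuration \eqref{eq:conditions-arrow-2-H}/\eqref{eq:conditions-arrow-3-H} (your two written wedges $\psi_k\to\varphi_k\leftarrow\varphi_{k+1}$ and $\varphi_{k+1}\to\varphi_k\leftarrow\psi_k$ are the same one and only cover the opposite orientation). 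Note also that $\to$ plus $\leftarrow$ implying $\cong$ is an \emph{extra} hypothesis used only in \Cref{thm:contractible}, not available here. The paper's device --- first normalize the loop to strictly alternating arrows, then pass from the loop $(\varphi_i)$ to the loop $(\delta(\varphi_i,\varphi_{i+1},\varphi_{i+2}))$ and iterate --- is designed precisely so that at every stage only \emph{consecutive} vertices, whose arrows alternate, are combined, which is what makes the required arrows derivable from \eqref{eq:conditions-arrow-delta}, grouping and permutation. I would rebuild the proof along those lines rather than trying to repair the single-apex construction.
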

\begin{proof}
  Consider a loop $\varphi_1 - \cdots - \varphi_n - \varphi_1$ in $\PP(\Gamma,H,X)$.  Since
  every triangle with three equal vertices is in $\PP(\Gamma,H,X)$, we can add
  edges of the form $\varphi \to \varphi$ in order to ensure that the
  path is of the form
  $\varphi_1 \to \varphi_2 \leftarrow \varphi_3 \to \varphi_4
  \leftarrow \dots \to \varphi_n \leftarrow \varphi_1$. We will show
  that this loop is homotopic to
  $\delta(\varphi_1,\varphi_2,\varphi_3) \to
  \delta(\varphi_2,\varphi_3,\varphi_4) \leftarrow
  \delta(\varphi_3,\varphi_4,\varphi_5) \to
  \delta(\varphi_4,\varphi_5,\varphi_6) \leftarrow \dots \to
  \delta(\varphi_n,\varphi_1,\varphi_2) \leftarrow
  \delta(\varphi_1,\varphi_2,\varphi_3) $,
  see \Cref{fig:contract}.
  \begin{figure}
    \begin{center}
      \begin{tikzcd}
        ~& \\
        \varphi_1
        \arrow[dd]
        \arrow[u]
        \arrow[rd]&\\
        & \delta(\varphi_1,\varphi_2)
        \arrow[ld]&~\\
        \varphi_2
        &
        &\delta(\varphi_1,\varphi_2,\varphi_3)
        \arrow[ll]
        \arrow[dd]
        \arrow[ld]
        \arrow[lu]
        \arrow[u]\\
        & \delta(\varphi_2,\varphi_3)
        \arrow[lu]
        \arrow[rd]\\
        \varphi_3
        \arrow[dd]
        \arrow[uu]
        \arrow[ru]
        \arrow[rd]
        \arrow[rr]&
        &\delta(\varphi_2,\varphi_3,\varphi_4)\\
        &\delta(\varphi_3,\varphi_4)
        \arrow[ld]
        \arrow[ur]&\\\
        \varphi_4&&
        \delta(\varphi_3,\varphi_4,\varphi_5)
        \arrow[uu]
        \arrow[d]
        \arrow[ld]
        \arrow[lu]
        \arrow[ll]\\
        & \delta(\varphi_4,\varphi_5)
        \arrow[ld]
        \arrow[lu]&~\\
        \varphi_5
        \arrow[d]
        \arrow[uu]
        &
        &\\
        ~
      \end{tikzcd}.
    \end{center}
    \caption{Contracting the loop $\varphi_1 \rightarrow \varphi_2
      \leftarrow \varphi_3 \rightarrow \varphi_4 \leftarrow \varphi_5
      \dots$.}
    \label{fig:contract}
  \end{figure}
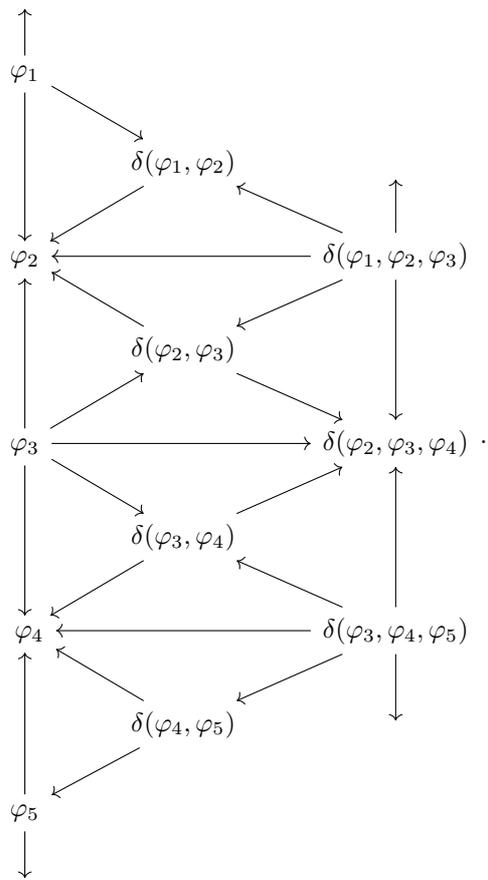
  Property \eqref{eq:conditions-arrow-2-H} and \eqref{eq:conditions-arrow-3-H}
  show that all expressions in the right column
  are defined.
  For the middle column property
  \eqref{eq:conditions-drop-H}
  is used additionally. For example we have $\varphi_1 \leftarrow
  \varphi_1 \rightarrow \varphi_2$, hence
  $(\varphi_1,\varphi_1,\varphi_2) \in H_3$, thus
  $(\varphi_1,\varphi_2) \in H_2$.

  The arrows from the left column to the middle column
  are implied by
  \eqref{eq:conditions-equivalent},
  \eqref{eq:conditions-arrow-delta},
  \eqref{eq:conditions-drop-delta},
  \eqref{eq:conditions-triv-H} and
  \eqref{eq:conditions-triv-delta}.
  For example $\varphi_1 \to \varphi_1$
  and $\varphi_1 \to \varphi_2$
  implies $\varphi_1 \cong \delta(\varphi_1)
  \cong \delta(\varphi_1, \varphi_1) \to \delta(\varphi_1,\varphi_2)$.
  
  For the arrows from the middle to the right column
  we additionally need \eqref{eq:conditions-grouping-H}
  and \eqref{eq:conditions-grouping-delta}.
  For example $\varphi_2 \leftarrow \delta(\varphi_2,\varphi_3)$
  and $\delta(\varphi_2,\varphi_3) \leftarrow
  \delta(\varphi_2,\varphi_3)$
  implies
  \begin{align*}
    &\delta(\varphi_2,\varphi_3)\cong\delta(\varphi_2,\varphi_2,\varphi_3) \cong
    \delta(\varphi_2,\delta(\varphi_2,\varphi_3)) \\
    \leftarrow&
    \delta(\delta(\varphi_1,\varphi_2),\delta(\varphi_2,\varphi_3))
    \cong \delta(\varphi_1,\varphi_2,\varphi_2,\varphi_3) \cong
    \delta(\varphi_1,\varphi_2,\varphi_3).
  \end{align*}
  The same properties together with \eqref{eq:conditions-permutation-H}
  and \eqref{eq:conditions-permutation-delta}
  also imply the arrows from
  the left column to the right column since
  $\varphi_2 \leftarrow \delta(\varphi_1,\varphi_2)$
  and $\varphi_2 \leftarrow \delta(\varphi_2,\varphi_3)$
  \begin{align*}
    \varphi_2 \cong \delta(\varphi_2,\varphi_2) \leftarrow
    \delta(\varphi_1,\varphi_2,\varphi_2,\varphi_3) \cong \delta(\varphi_1,\varphi_2,\varphi_3).
  \end{align*}
  
  Applying the same arguments again and using
  \eqref{eq:conditions-grouping-H}
  and 
  \eqref{eq:conditions-grouping-delta}
  we see that our loop is homotopic to
  \begin{align*}
    &\delta(\varphi_1,\varphi_2,\varphi_3,\varphi_4,\varphi_5) \to 
      \delta(\varphi_2,\varphi_3,\varphi_4,\varphi_5,\varphi_6) \leftarrow 
      \delta(\varphi_3,\varphi_4,\varphi_5,\varphi_6,\varphi_7) \to \\
    &\delta(\varphi_4,\varphi_5,\varphi_6,\varphi_7,\varphi_8) \leftarrow  
      \dots 
      \to
      \delta(\varphi_n,\varphi_1,\varphi_2,\varphi_3,\varphi_4) \leftarrow  
      \delta(\varphi_1,\varphi_2,\varphi_3,\varphi_4,\varphi_5).
  \end{align*}
  Continuing this procedure we obtain a homotopy to a path all of
  whose vertices are of the form
  $\delta(\varphi_{\kappa(1)}),\dots,\delta(\varphi_{\kappa(n)})$ for
  a permutation $\kappa$.  Such a loop can be contracted to
  the point
  $\delta(\varphi_1,\varphi_2,\varphi_3,\varphi_4,\dots,\varphi_n)$
  by \eqref{eq:conditions-permutation-delta}.
\end{proof}

\section{Markov shifts on finitely generated groups}
\label{sec:markov-shifts-fg}
Let $\CG$ be a finitely generated group and let $S \subseteq \CG$ be a
finite generating set containing the identity which does not have to be
symmetric. The most interesting case is actually when
$S \cap S^{-1} = \{e\}$ where $e$ is the identity element of the
group.

Let $K$ be a finite set of symbols
endowed with the discrete topology.
The space of configurations $X = K^\CG$ endowed with the product topology is a compact, totally
disconnected, metrizable space.
$\CG$ acts on $K^\CG$ from the left via
$\sigma_g(x)_h=x_{g^{-1}h}$.

We now consider topological Markov shifts of $X$ in the following sense.
\begin{defn}
Let $\CR \subseteq K^S$ be a set of patterns.
We call
\begin{align*}
  X_\CR:=\{x \in X \setsep (\sigma_g(x))_{|S} \in \CR \text{
  for all } g \in \CG  \}
\end{align*}
an \emph{$S$-Markov shift} with
\emph{allowed} patterns $\CR$.
\end{defn}
Let $X, Y$ be two $S$-Markov shifts over the alphabets $K$ and $L$. A
map $\varphi: X \to Y$ commuting with the $\CG$-action has
neighborhood $T \subseteq \CG$ if there is a map $\varphi_\loc: K^T \to K$
with $\varphi_g(x)=\varphi_\loc(\sigma_{g^{-1}}(x)_{|T})$.
Such a map is called a \emph{sliding block map}.

Now consider the groupoid $\Gamma$ of $S$-Markov shifts
over alphabets of the form $K=\{1,\dots,k\}$ for $k \in \N$ together with
invertible sliding block maps as morphisms.
By the Curtis-Lyndon-Hedlund theorem (see \cite{ceccherini-silbersteinCellularAutomataGroups2010a}), these are precisely the homeomorphisms commuting with the
$\CG$ action, hence we call them (topological) \emph{conjugacies}.
A conjugacy with neighborhood $\{e\}$
whose inverse also has neighborhood $\{e\}$ will be called
an \emph{alphabet bijection}.
Let $H \subseteq \Gamma^1$ be the set of all sliding block maps with
neighborhood $S$, whose inverse has neighborhood $S^{-1}$.
We call these conjugacies \emph{elementary}.

For an $S$-Markov shift $X$ let $\Gamma_X$ be the connected component of $X$ in $\Gamma$
and let $H_X := H \cap \Gamma_X^{1}$.
We now want to show that $H_X$ homotopically uniquely generates $\Gamma_X$.

Consider the following basic construction.
Let $\varphi_i: X \to Y_i$ be bijective sliding block maps for $1\leq i \leq n$.
We define 
\begin{align*}
	\varphi_1\star \dots \star \varphi_n: X \to Y_1 \times \dots \times Y_n,\quad x \mapsto (\varphi_1(x),\dots,\varphi_n(x)).
\end{align*}
This is a bijective map onto its image \[\im(\varphi_1\star \dots \star \varphi_n)=
\{(\varphi_1(x),\dots,\varphi_n(x)) \setsep x \in X\}.\]
To see recognize when this image is an $S$-Markov shift, the following
observation is useful.
For any $i \in \{1,\dots,n\}$ we can write the image as
\begin{align}
  &\im(\varphi_1\star \dots \star \varphi_n) \\
  &
    = \{(x_1,\dots,x_n) \in Y_1 \times \dots \times Y_n \setsep
    \varphi_j^{-1}(x_j)
    =\varphi_i^{-1}(x_i) \text{ for } j \in \{1,\dots,n\}\} \\
  &
    = \{(x_1,\dots,x_n) \in Y_1 \times \dots \times Y_n \setsep
    (\varphi_i\circ\varphi_j^{-1})(x_j)
    = x_i \text{ for } j \in \{1,\dots,n\} \} \label{eq:im-ij}\\
  &
    = \{(x_1,\dots,x_n) \in Y_1 \times \dots \times Y_n \setsep
    (\varphi_j\circ\varphi_i^{-1})(x_i)
    = x_j \text{ for } j \in \{1,\dots,n\} \}\label{eq:im-ji} 
\end{align}
Now let $H_n$ be the set of $n$-tuples of elementary conjugacies in $H_X$ for which
$\im(\varphi_1 \star \dots \star \varphi_n)$ is $S$-Markov.  To define
$\delta$ we have to ensure that the alphabet of our image is a
consecutive sequence of integers starting at one.
Define
\[L_1(\im(\varphi_1 \star \dots \star\varphi_n)) :=
  (\varphi_1(x)_0,\dots,\varphi_n(x)_0).\]
Notice that this set is
totally ordered by the lexicographic order, hence there is a canonical
choice of enumeration
\[r: L_1(\im(\varphi_1\star \dots\star \varphi_n)) \to \{1,\dots,
  |\im(\varphi_1\star \dots\star \varphi_n)|\}.\]
Then for
$\varphi_1,\dots,\varphi_n \in H_n$ the map
$\delta_n(\varphi_1,\dots,\varphi_n):= r \circ (\varphi_1 \star \dots \star
\varphi_n)$ is a sliding block map from $X$ to an $S$-Markov subshift
over the alphabet \[\{1,\dots, |\im(\varphi_1\star \dots \star \varphi_n)|\}.\]  Hence
$\delta_n$ maps $H_n$ to $\Gamma^1_X$. It will be useful to also define
$\delta_n(\varphi_1,\dots,\varphi_n)$ for such
$\varphi_1,\dots,\varphi_n \in H_X^{-1}$ for which
$\im(\varphi_1\star\dots\star\varphi_n)$
is $S$-Markov using the same definition as
above.

\begin{lem}
  \label{lem:refinment-ongroups}
  The triple $(\cong,(H_n)_{n \in \N},(\delta_{n \in \N}))$ as defined above is a refinement structure for
  $(\Gamma_X,H_X)$.
\end{lem}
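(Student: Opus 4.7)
The plan is to interpret $\cong$ as \emph{equality up to post-composition with an alphabet bijection}, i.e., $\varphi_1 \cong \varphi_2$ means $\varphi_2 = b \circ \varphi_1$ for some alphabet bijection $b$. Since alphabet bijections and their inverses have neighborhood $\{e\} \subseteq S$, pre- or post-composing any element of $H_X$ with such a $b$ again lies in $H_X$; this yields \eqref{eq:conditions-equivalent} at once. By construction $\delta_1(\varphi) = r \circ \varphi$ for the canonical re-enumeration $r$, so \eqref{eq:conditions-triv-H} and \eqref{eq:conditions-triv-delta} are immediate.

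For the permutation, grouping, and duplicate-argument axioms I would isolate a single invariance: the image $\im(\varphi_1 \star \cdots \star \varphi_n) \subseteq (K_1 \times \cdots \times K_n)^\CG$ transforms in a canonical way under (i) permuting the factors (a permutation of product coordinates), (ii) collating arguments into refinements (reassociating tuples into blocks, matching how $\delta_k \circ \delta_n$ relates to $\delta_{kn}$), and (iii) duplicating an argument (the image lies in the diagonal of the duplicated coordinates). Each such transformation is an alphabet bijection of the product alphabet; the subsequent canonical enumerations $r$ differ only by further alphabet bijections, so $S$-Markovness passes across and \eqref{eq:conditions-permutation-H}--\eqref{eq:conditions-drop-delta} all reduce to these observations.

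The heart of the lemma is the arrow properties. For \eqref{eq:conditions-arrow-2-H}, from $\varphi_1 \leftarrow \varphi_2 \to \varphi_3$ the maps $\varphi_1 \circ \varphi_2^{-1}$ and $\varphi_3 \circ \varphi_2^{-1}$ are in $H$, hence sliding block with neighborhood $S$; using description \eqref{eq:im-ji} with $i=2$, a triple $(y_1,y_2,y_3)$ lies in $\im(\varphi_1 \star \varphi_2 \star \varphi_3)$ iff the local rules applied to the $S$-window of $y_2$ at each $g$ reproduce $(y_1)_g$ and $(y_3)_g$, which, together with the defining $S$-patterns of $Y_2$, is an $S$-local condition. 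Condition \eqref{eq:conditions-arrow-3-H} is analogous via \eqref{eq:im-ij}. For \eqref{eq:conditions-arrow-delta}, one computes
\[\delta_2(\varphi_2,\varphi_4) \circ \delta_2(\varphi_1,\varphi_3)^{-1} = r' \circ \big((\varphi_2 \circ \varphi_1^{-1}) \star (\varphi_4 \circ \varphi_3^{-1})\big) \circ r^{-1},\]
which applies the $S$-local rules of $\varphi_2 \circ \varphi_1^{-1}$ and $\varphi_4 \circ \varphi_3^{-1}$ coordinatewise; hence it has neighborhood $S$ and an inverse with neighborhood $S^{-1}$, and the flanking alphabet bijections $r,r'^{-1}$ keep the entire composition in $H$.

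The main obstacle is the alphabet bookkeeping. The image of a star product is not a priori of the form $K^\CG$ for a canonical $K$, so every statement about $S$-Markovness or $H$-membership must be reinterpreted after pushing the image through $r$ onto a canonically labeled alphabet. Once one fixes that $\cong$ absorbs precisely these re-enumerations and that $H$ is stable under pre- and post-composition with alphabet bijections, the remaining axioms reduce to the elementary observations above with no further work.
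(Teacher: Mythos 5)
Your proposal is correct and follows essentially the same approach as the paper's own proof: verify each axiom directly, using the stability of $H_X$ under pre/post-composition with alphabet bijections for the equivalence axiom, the observation that permutation/grouping/duplication of the factors in a star product changes the image only by a canonical re-encoding for the middle block of axioms, and the explicit descriptions of $\im(\varphi_1\star\cdots\star\varphi_n)$ from \eqref{eq:im-ij} and \eqref{eq:im-ji} for the arrow axioms. One small thing worth pointing out: you correctly invoke \eqref{eq:im-ji} (with $i=2$) for \eqref{eq:conditions-arrow-2-H} and \eqref{eq:im-ij} for \eqref{eq:conditions-arrow-3-H}, which is what is actually needed — when $\varphi_1 \leftarrow \varphi_2 \rightarrow \varphi_3$, the maps $\varphi_1\circ\varphi_2^{-1}$ and $\varphi_3\circ\varphi_2^{-1}$ have neighborhood $S$, so one must express $x_1,x_3$ in terms of $x_2$ rather than the other way around to get $S$-window constraints. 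The paper's proof cites these two equations in the opposite order; your reading is the one that actually makes the membership condition $S$-local. Otherwise the two proofs agree, with your version leaning slightly more on the \enquote{absorb all re-enumerations into $\cong$} viewpoint while the paper phrases the grouping and dropping cases as literal equality of the resulting subshifts (both formulations are fine since the canonical enumerations are built to be order-compatible under composition).
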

\begin{proof}
We have to check properties \eqref{eq:conditions-equivalent} to \eqref{eq:conditions-arrow-delta}.
For \eqref{eq:conditions-equivalent} let $\varphi_1 \cong \varphi_2$,
$\varphi_3 \cong \varphi_4$
and $\varphi_1 \to \varphi_3$. There are alphabet bijections
$f$ and $g$ such that
$\varphi_1 = f \circ \varphi_2$ and $\varphi_4=g \circ \varphi_3$.
We have
\begin{align*}
\varphi_4 \circ \varphi_2^{-1}
&= \varphi_4 \circ \varphi_3^{-1} \circ
\varphi_3 \circ \varphi_1^{-1} \circ
\varphi_1 \circ \varphi_2^{-1}\\
&= g \circ \varphi_3 \circ \varphi_1^{-1} \circ f.
\end{align*}
Composition with a bijection of the alphabet from either side does not change the
radius of a block map. Hence $\varphi_2 \to \varphi_4$, which shows
\eqref{eq:conditions-equivalent}.

Conditions \ref{eq:conditions-triv-H} and
\ref{eq:conditions-triv-delta} are straightforward.

Permutation of the factors in $\varphi_1 \star \dots \star \varphi_n$
only changes the order of the coordinates and hence
generates the same subshift up to permutation of the alphabet. This
shows \eqref{eq:conditions-permutation-H}
and \eqref{eq:conditions-permutation-delta}.

By definition
\[\im(\delta_k(
  \delta_n(\varphi_1^1,\dots,\varphi_n^1),\dots,
  \delta_n(\varphi_1^k,\dots,\varphi_n^k)))\]
and
\[\im(\delta_{kn}(\varphi_1^1,\dots,\varphi_n^1,\dots,\varphi_1^k,\dots,\varphi_n^k))\]
are actually the same subshift, hence one of them is $S$-Markov if and
only if the other is.  This shows \eqref{eq:conditions-grouping-H} and
\eqref{eq:conditions-grouping-delta}.

The same argument applies to
$\delta(\varphi_1,\varphi_1,\dots,\varphi_n)$
and $\delta(\varphi_1,\dots,\varphi_n)$
which are the same subshift, showing
\eqref{eq:conditions-drop-H} and \eqref{eq:conditions-drop-delta}.

If $\varphi_1 \leftarrow \varphi_2 \rightarrow \varphi_3$ then
\eqref{eq:im-ij} implies $(\varphi_1,\varphi_2,\varphi_3) \in H_3$
and hence \eqref{eq:conditions-arrow-2-H}.
Simlarly for 
$\varphi_1 \rightarrow \varphi_2 \leftarrow \varphi_3$ then
\eqref{eq:im-ji} implies $(\varphi_1,\varphi_2,\varphi_3) \in H_3$
and hence \eqref{eq:conditions-arrow-3-H}.

Finally $\varphi_1 \to \varphi_2$ and $\varphi_3 \to \varphi_4$ implies
$\delta(\varphi_1, \varphi_3) \to \delta(\varphi_2,\varphi_4)$.
Notice that for $(y,z) \in \im (\varphi_1 \star \varphi_3)$ we have
$\delta(\varphi_1, \varphi_3)^{-1}(y,z)=\varphi_1^{-1}(y)=\varphi_3^{-1}(z)$
and hence
\begin{align*}
  \delta(\varphi_2,\varphi_4)(\delta(\varphi_1,\varphi_3)^{-1}(y,z))
  &=\varphi_2(\delta(\varphi_1, \varphi_3)^{-1}(y,z)),
    \varphi_4(\delta(\varphi_1, \varphi_3)^{-1}(y,z)) \\
  &=\varphi_2(\varphi_1^{-1}(y)), \varphi_4(\varphi_3^{-1}(z)).
\end{align*}
Hence $\delta(\varphi_1,\varphi_3) \to \delta(\varphi_2,\varphi_4)$,
thus \eqref{eq:conditions-arrow-delta}.
\end{proof}

Now we only have to show that $H_X \cup H_X^{-1}$ actually generates
$\Gamma_X$.  In the case $\CG=\Z$ this is the classical argument of
Williams as presented for example in \cite[Theorem
7.1.2]{lindIntroductionSymbolicDynamics1995}.  In the general case we need
slightly more notation. Set $S'=S \cup S^{-1}$.  Then $S'$ is a
symmetric generating set of $\CG$ and we can consider the Cayley graph
of $\CG$ with respect to this generating set.  We say that
$T \subseteq \CG$ is \emph{$S'$-connected} if $T$ induces a connected
subgraph in this Cayley graph. For an $S$-Markov shift $Y$ and
$g \in S$ let $\tau_{g,Y} \in H$ be the map defined by
$\tau_{g,Y}(y)_h = y_{hg}$.
 
 \begin{lem}
   \label{lem:smaller-neighborhood}
	Let $T \subseteq G$ be finite and $S'$-connected with $T \setminus \{e\} \neq \emptyset$.
	Then there exists $g \in S \cup S^{-1}$ and $h \in T \setminus \{e\}$ such that 
	$T':=(T \setminus \{h\})$ is $S'$-connected and
	$T \subseteq T' \cup T'g$.
\end{lem}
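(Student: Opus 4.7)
The plan is to exploit the fact that $T$ is a finite connected subgraph of the Cayley graph of $\CG$ with respect to $S'$, and to choose $h$ to be a suitably chosen leaf of a spanning tree. Concretely, I would fix a spanning tree $\mathcal{T}$ of the induced subgraph on $T$, which exists because $T$ is finite and $S'$-connected. Since removing a leaf from a spanning tree of a finite connected graph leaves a spanning tree of the graph on the remaining vertices, any leaf $h$ of $\mathcal{T}$ has the property that $T \setminus \{h\}$ is again $S'$-connected.

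Next I would unpack what it means for a leaf to have a tree-neighbor. If $h$ is a leaf of $\mathcal{T}$ and $t'$ is its unique neighbor in $\mathcal{T}$, then $t' \in T' := T \setminus \{h\}$ and the edge between them in the Cayley graph says that $t'^{-1}h \in S' = S \cup S^{-1}$, i.e., $h = t' g$ for some $g \in S \cup S^{-1}$. Thus $h \in T'g$, and since every element of $T$ distinct from $h$ already lies in $T'$, we get the desired inclusion $T \subseteq T' \cup T'g$.

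The only remaining point is to ensure that we can pick the leaf $h$ to lie in $T \setminus \{e\}$. If $|T| \geq 2$, then $\mathcal{T}$ has at least two leaves, so at least one of them differs from $e$; we take that one. The hypothesis $T \setminus \{e\} \neq \emptyset$ guarantees $|T| \geq 1$, and the useful case of the lemma (the one invoked to iteratively shrink the neighborhood of a sliding block map) is when $e \in T$ and $|T| \geq 2$, so the leaf-choice is always possible in that regime.

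The main obstacle is really just the bookkeeping of translating the graph-theoretic notion of ``deletable leaf'' into the group-theoretic statement $T \subseteq T' \cup T'g$; once one sees that a tree-edge $\{t', h\}$ in the Cayley graph is literally a relation $h = t' g$ for some $g \in S'$, the proof reduces to the well-known fact that spanning trees of finite connected graphs have leaves whose deletion preserves connectedness.
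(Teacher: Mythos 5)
Your proof follows exactly the approach of the paper: fix a spanning tree of the induced subgraph, remove a leaf $h \neq e$, and use the tree-edge to the unique neighbor $t'$ to produce $g \in S'$ with $h = t'g$. You are a bit more careful than the paper about justifying that a leaf $h \neq e$ can be chosen (via "a tree on $\geq 2$ vertices has $\geq 2$ leaves") and about flagging the degenerate case $|T|=1$ with $e \notin T$, where the statement as literally written would fail; since in every application $e \in T$, this is harmless, and your observation correctly localizes the (inconsequential) gap shared by both your proof and the paper's.
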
 
\begin{proof}
  Pick a spanning tree of the subgraph induced by $T$ of the Cayley
  graph of $\CG$ with respect to $S'$. Let $h \neq e$ be a leaf in this
  tree.  Removing $h$ from $T$ clearly leaves it $S'$-connected and
  there is $h' \in T \setminus \{h\}, g \in S'$ such that $h=
  h'g$. Hence $T':= T \setminus \{h\}$ is $S'$-connected and
  $T \subseteq T' \cup T'g$.
\end{proof} 

\begin{lem}
  \label{lem:higherblock}
  Let $\varphi: X \to Y$ be a sliding block map with $S'$-connected
  neighborhood $T \neq \{e\}$.  There is a map $\psi$ in
  $H \cup H^{-1}$ such that $\varphi \circ \psi$ has neighborhood $T'$
  with $|T'|=|T|-1$ where $T'$ is again $S'$-connected.
\end{lem}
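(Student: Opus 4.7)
The plan is to apply a higher-block recoding in the group direction singled out by \Cref{lem:smaller-neighborhood}, trading one element of the neighbourhood $T$ of $\varphi$ for a richer alphabet on the source side. First I would invoke that lemma to extract $h \in T \setminus \{e\}$ and $g \in S \cup S^{-1}$ with $T' := T \setminus \{h\}$ still $S'$-connected, $T \subseteq T' \cup T'g$, and hence $h = t'g$ for some $t' \in T'$; note in particular that $g \neq e$, since otherwise $h = t' \in T'$.

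Next I would build the higher-block shift $Z \subseteq (K \times K)^\CG$ as the image of the bijection $\eta : X \to Z$ defined by $\eta(x)_a = (x_a, x_{ag})$, and set $\psi := \eta^{-1}$, so $\psi(y)_a$ is the first coordinate of $y_a$. Then $\psi$ has neighbourhood $\{e\}$ and $\eta$ has neighbourhood $\{e,g\}$: when $g \in S$ this puts $\eta \in H$ and hence $\psi \in H^{-1}$, while when $g \in S^{-1}$ one instead gets $\eta \in H^{-1}$ and $\psi \in H$, so in either case $\psi \in H \cup H^{-1}$. To make $\eta$ an honest morphism in $\Gamma$ I would then check that $Z$ is an $S$-Markov shift (after an alphabet bijection to an initial segment of $\N$, absorbed silently into $\eta$): the constraints defining $X$ pull back through the first coordinate to $S$-patterns on $Z$, and the extra consistency condition stating that the second coordinate of $y_a$ equals the first coordinate of $y_{ag}$ can be re-centred at $a$ if $g \in S$ or at $ag$ if $g \in S^{-1}$ so as to be an $S$-pattern.

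Finally I would read off the neighbourhood of $\varphi \circ \psi$. A priori $(\varphi \circ \psi)(y)_a$ depends on $y$ at positions $aT$ through first coordinates; for $t \in T'$ this position is already in $aT'$, and for the remaining coordinate $t = h = t'g$ the consistency condition just verified gives that the first coordinate of $y_{ah}$ equals the second coordinate of $y_{at'}$, so the missing value can be read from $y$ at $at' \in aT'$ instead. Hence $\varphi \circ \psi$ has neighbourhood $T'$, which is $S'$-connected of cardinality $|T|-1$. The only step that demands genuine care is the Markov verification of $Z$, where the case split between $g \in S$ and $g \in S^{-1}$ is essential to keep the consistency constraint truly $S$-local rather than $(S \cup \{g\})$-local; everything else is bookkeeping around the higher-block construction and which side of $H$ versus $H^{-1}$ the recoding lands on.
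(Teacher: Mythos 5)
Your proposal is correct and follows essentially the same route as the paper: your map $\eta$ with $\eta(x)_a = (x_a, x_{ag})$ is precisely the paper's $\id_X \star \tau_{g,X}$ (where $\tau_{g,X}(x)_a = x_{ag}$), your $\psi = \eta^{-1}$ is the paper's $\delta(\id_X,\tau_{g,X})^{-1}$, and the final step using the consistency of coordinates to replace the position $ah = at'g$ by $at'$ matches the paper's identity $x_h = x_{h'g} = \tau_{g,X}(x)_{h'}$. The only cosmetic difference is that the paper dispatches the $S$-Markov verification of the image by citing its general equations \eqref{eq:im-ij}/\eqref{eq:im-ji}, while you spell out the $g \in S$ versus $g \in S^{-1}$ re-centering explicitly.
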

\begin{proof}
	Let $h \in T,g \in S \cup S^{-1}$ and $T':=T \setminus \{h\}$ be as in \Cref{lem:smaller-neighborhood}. 
	In particular there is $h' \in T'$ with $h'g=h$.
   By \eqref{eq:im-ij} and \eqref{eq:im-ji} 
   $\im(\id_X \star \tau_{g,X})$ is $S$-Markov and we can define
   \[\psi:=\delta(\id_X,\tau_{g,X})^{-1} = (\id_X \star
     \tau_{g,X})^{-1} \circ r^{-1}.\]
   Notice that $\delta(\id_X, \tau_{g,X})$ is either in $H$ or $H^{-1}$,
   depending on whether $g \in S$ or $g \in S^{-1}$.
	We want to show that $\varphi \circ \psi$ has
   neighborhood $T'$.
	Since $r$ is just a permutation of the alphabet, it is enough to show that
	$\varphi \circ (\id_X \star \tau_{g,X})^{-1}$ has neighborhood $T'$.
	Let $(x,y) \in \im(\id_{X} \star\, \tau_{g,X})$. Then $y=\tau_{g,X}(x)$,
	hence $x_{h}=x_{h'g}=\tau_{g,X}(x)_{h'}=y_{h'}$. Therefore
	$(x,y)_{|T'}$ determines $x_{|T}$, thus $\psi \circ (\id \star \tau_{g,X})^{-1}$
	has neighborhood $T'$.
\end{proof}

\begin{lem}
  \label{lem:generate-mainstep}
  Let $\varphi: X \to Y$ be a one block map whose inverse has an
  $S'$-connected neighborhood $T \neq \{1_G\}$. There are maps
  $\psi_1,\psi_2$ in $H \cup H^{-1}$ such that
  $\psi_2 \circ \varphi \circ \psi_1^{-1}$ is a one block map whose
  inverse has an $S'$-connected neighborhood $T'$ with $|T'|=|T|-1$.
\end{lem}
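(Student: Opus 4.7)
My plan is to apply Lemma~\ref{lem:higherblock} to $\varphi^{-1}$ to shrink its neighborhood, and then compensate on the $X$-side by a matching higher-block recoding so that the forward composition remains one-block. Using Lemma~\ref{lem:smaller-neighborhood} on $T$, fix $h_0 \in T \setminus \{e\}$, $g \in S \cup S^{-1}$, and $h_0' \in T' := T \setminus \{h_0\}$ with $h_0 = h_0' g$, $T'$ $S'$-connected, and $T \subseteq T' \cup T' g$. Set $\psi_2 := \delta(\id_Y, \tau_{g,Y})$ and $\psi_1 := \delta(\id_X, \tau_{g,X})$; by the argument recorded in the proof of Lemma~\ref{lem:higherblock}, both lie in $H \cup H^{-1}$ (in $H$ if $g \in S$, in $H^{-1}$ if $g \in S^{-1}$).

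To see that $\psi_2 \circ \varphi \circ \psi_1^{-1}$ is one-block, I~would use the Markov relation $\pi_2(w_h) = \pi_1(w_{hg})$ in $W := \im(\id_X \star \tau_{g,X})$. Writing $x = \psi_1^{-1}(w)$ one has $x_h = \pi_1(w_h)$ and $x_{hg} = \pi_2(w_h)$, so
\[
(\psi_2 \circ \varphi \circ \psi_1^{-1})(w)_h = r_Z\bigl(\hat\varphi(\pi_1(w_h)),\, \hat\varphi(\pi_2(w_h))\bigr),
\]
which depends only on $w_h$.

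The heart of the proof is showing that the inverse $\psi_1 \circ \varphi^{-1} \circ \psi_2^{-1} : Z \to W$ has an $S'$-connected neighborhood of size $|T|-1$. The first half copies the argument of Lemma~\ref{lem:higherblock} verbatim on $\varphi^{-1}$: the Markov relation $\pi_2(z_h) = \pi_1(z_{hg})$ in $Z := \im(\id_Y \star \tau_{g,Y})$ lets us recover the critical value $y_{h h_0} = y_{h h_0' g} = \pi_2(z_{h h_0'})$ from $z$ at position $h h_0' \in h T'$, so $\varphi^{-1} \circ \psi_2^{-1}$ already has neighborhood $T'$. The plan for the second half is to post-compose with $\psi_1$ and use the Markov constraint in $W$ — namely that $\pi_2(w_h)$ is redundant with $\pi_1(w_{hg})$ — to collapse the naive bound $h T' \cup h g T'$ (arising from the two coordinates of $w_h = (x_h, x_{hg})$) down to an $S'$-connected set of size $|T|-1$.

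I~expect the final neighborhood count to be the main obstacle: a priori the two coordinates of $w_h$ depend on $z$ at $h T'$ and $h g T'$ respectively, whose union can be strictly larger than $|T|-1$. Achieving the claimed bound requires playing the two Markov relations against each other together with the precise inclusion $T \subseteq T' \cup T' g$ from Lemma~\ref{lem:smaller-neighborhood}, so that the component-wise dependencies of $w_h$ are forced to share positions in a way that exactly cancels the element $h_0$.
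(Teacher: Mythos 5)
Your choice of $\psi_1$ does not match the paper's, and this difference is not cosmetic — with $\psi_1 = \delta(\id_X,\tau_{g,X})$ the inverse $\psi_1 \circ \varphi^{-1} \circ \psi_2^{-1}$ genuinely fails to have an $S'$-connected neighborhood of size $|T|-1$. You already flag the difficulty honestly at the end, but the hoped-for cancellation does not happen. Writing $w = \psi_1'(x) = (x, \tau_{g,X}(x))$, the two coordinates of $w_h$ are $x_h = \varphi^{-1}(y)_h$ (depending on $y$ over $hT$) and $x_{hg} = \varphi^{-1}(y)_{hg}$ (depending on $y$ over $hgT$). The Markov relation in $Z$ turns $z_{|hT'}$ into knowledge of $y$ over $hT' \cup hT'g \supseteq hT$, which handles the first coordinate, but it says nothing about $hgT$: the relevant inclusion would be $gT \subseteq T' \cup T'g$, and Lemma~\ref{lem:smaller-neighborhood} only gives $T \subseteq T' \cup T'g$. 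The Markov constraint in $W$ ($\pi_2(w_h) = \pi_1(w_{hg})$) is a constraint on the \emph{output}, not a device for reading $x_{hg}$ off a smaller window of $z$, so it cannot save the count. A concrete counterexample: take $\mathcal{G} = \Z$, $S = \{0,1\}$, $T = \{0,1,2\}$, $T' = \{0,1\}$, $g = 1$, $h = 0$. Then $w_0 = (x_0, x_1)$ requires $y_0,y_1,y_2,y_3$, and from $z_{|\{0,1\}} = ((y_0,y_1),(y_1,y_2))$ you can only recover $y_0,y_1,y_2$. The only size-$2$ window of $z$ that determines $w_0$ is $\{0,2\}$, which is not $S'$-connected, so the lemma cannot be applied inductively afterward.

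The paper avoids this by taking $\psi_1 := \delta(\id_X,\, \tau_{g,Y} \circ \varphi)$ rather than $\delta(\id_X,\tau_{g,X})$. With this choice the composite $\psi_1' \circ \varphi^{-1} \circ {\psi_2'}^{-1}$ sends $z = (y,y')$ to $(\varphi^{-1}(y),\, y')$ — the \emph{second} coordinate at position $h$ is simply $y'_h = \pi_2(z_h)$ and costs nothing beyond $z_h$ itself. Only the first coordinate needs $\varphi^{-1}$, and that one is handled exactly by the $T \subseteq T' \cup T'g$ relation you already invoke. Note also that $\id_X \star (\tau_{g,Y}\circ\varphi)$ is still in $H \cup H^{-1}$ since $\varphi$ is a one-block map, so $\tau_{g,Y}\circ\varphi$ has neighborhood $\{g\}$. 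Your one-block verification for the forward composite is fine (it also works for the paper's $\psi_1$), and your instinct to run Lemma~\ref{lem:higherblock} on $\varphi^{-1}$ is exactly right; what you are missing is that the companion coordinate you append on the $X$-side should be $\tau_{g,Y}\circ\varphi$, not $\tau_{g,X}$, so that after applying $\varphi^{-1}$ it collapses to a projection instead of a second copy of $\varphi^{-1}$ at a shifted position.
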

\begin{proof}
  Again let
  $h \in T,g \in S \cup S^{-1}$ and $T':=T \setminus \{h\}$ be as in
  \Cref{lem:higherblock},
  so there is $h' \in T'$ with $h'g=h$.  Define
	\begin{align*}
	\psi_1'&:=\id_X \star (\tau_{g,Y} \circ \varphi), &
	\psi_2'&:=\id_Y \star\, \tau_{g,Y}, \\
	\psi_1&:=r\circ \psi_1'=\delta(\id_X,\tau_{g,Y} \circ \varphi), &
	\psi_2&:=r\circ \psi_2'=\delta(\id_Y,\tau_{g,Y}).
	\end{align*}
   The following commutative diagram summarizes the situation.
	\begin{center}
		\begin{tikzcd}[column sep=3.2cm]
			X \arrow[r, "\varphi"] \arrow[d, "\id_X \star\, (\tau_{g,Y} \circ \varphi)" left] 
			&Y \arrow[d, "\id_Y \star\, \tau_{g,Y}"] 
			\\
			\im(\id_X \star\, (\tau_{g,Y} \circ \varphi)) 
			\arrow[r, "\psi_2' \circ \varphi  \circ {\psi_1'}^{-1}" below]
			\arrow[d,"r" left]
			& \im(\id_Y \star\, \tau_{g,Y}) \arrow[d,"r"]
			\\
			\im(\delta(\id_X, \tau_{g,Y} \circ \varphi)) 
			\arrow[r, "\psi_2 \circ \varphi  \circ {\psi_1}^{-1}" below]	
			& \im(\delta(\id_Y, \tau_{g,Y}))
		\end{tikzcd}
	\end{center}
	Now $\id_X \star\, (\tau_{g,Y} \circ \varphi)$ has neighborhood $\{e,g\}$ and its
   inverse has neighborhood $\{e\}$.
	Hence it is contained in $H \cup H^{-1}$.
	The same holds true for $\id_Y \star\, \tau_{g,Y}$.
	Finally we have for $(x,x') \in \im(\id_X \star\, (\tau_{g,Y} \circ \varphi))$
	\begin{align*}
		(\psi'_2 \circ \varphi \circ {\psi'_1}^{-1}) (x,x') 
		&= (\id_Y \star\, \tau_{g,Y} )(\varphi(x)) \\
		&=(\varphi(x),\tau_{g,Y}(\varphi(x)) \\
		&=(\varphi(x),x').
	\end{align*}
	Hence $\psi'_2 \circ \varphi \circ {\psi'_1}^{-1}$ is a one-block map.
	For  $(y,y') \in \im(\id_Y \star\, \tau_{g,Y})$ we have
	\begin{align*}
		(\psi'_1 \circ \varphi^{-1} \circ {\psi'_2}^{-1}) (y,y') 
		&= (\id_Y \star\, (\tau_{g,Y} \circ \varphi))(\varphi^{-1}(y)) \\
		&= (\varphi^{-1}(y),\tau_{g,Y}(y)) \\
		&= (\varphi^{-1}(y),y').
	\end{align*}	
	Now $y_{h}=y_{h'g}=(\tau_{g,Y}(y))_{h'}=y'_{h'}$, hence $y_{|T}$ is
   uniquely determined by $(y,y')_{|T'}$ and therefore
   $\psi'_1 \circ \varphi^{-1} \circ {\psi'_2}^{-1}$ has neighborhood
   $T'$.  Since $r$ is merely a bijection of alphabets, the map
   $\psi_2 \circ \varphi \circ {\psi_1}^{-1}$ is also a one-block map whose
   inverse has neighborhood $T'$.
\end{proof}

\begin{thm}
  \label{thm:hug-over-groups}
  The groupoid $\Gamma_X$ is homotopically uniquely generated by $H_X$.
\end{thm}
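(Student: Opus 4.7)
The plan is to leverage the machinery already in place: Lemma \ref{lem:refinment-ongroups} furnishes a refinement structure for $(\Gamma_X, H_X)$, so Theorem \ref{thm:simply-connected} reduces the claim to showing that $H_X$ generates $\Gamma_X$. Thus the entire substance of the proof lies in writing an arbitrary conjugacy $\varphi \colon Y \to Z$ in $\Gamma_X$ as a finite composition of elements of $H_X \cup H_X^{-1}$; once this is done, homotopically unique generation follows at once.

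To prove generation, I first invoke the Curtis--Lyndon--Hedlund theorem to realize $\varphi$ as a sliding block map with some finite neighborhood $T$. Since any finite superset of $T$ is again a valid neighborhood, I can enlarge $T$ to an $S'$-connected set containing $e$ by adjoining a spanning path in the Cayley graph with respect to $S' = S \cup S^{-1}$. Then I iterate Lemma \ref{lem:higherblock}: at each step, as long as the current neighborhood is not $\{e\}$, post-composing with some $\psi_i \in H \cup H^{-1}$ strictly decreases the size of the $S'$-connected neighborhood by one while preserving $S'$-connectedness. After finitely many iterations I obtain a one-block map $\varphi_0 = \varphi \circ \psi_1 \circ \cdots \circ \psi_m$.

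I then run the analogous reduction on the inverse. The inverse of $\varphi_0$ is a sliding block map whose neighborhood I may again assume is $S'$-connected and contains $e$. Lemma \ref{lem:generate-mainstep} now produces, at each step, maps $\eta_j, \psi'_j \in H \cup H^{-1}$ such that $\eta_j \circ \varphi_0 \circ (\psi'_j)^{-1}$ is still a one-block map whose inverse has a strictly smaller $S'$-connected neighborhood. After finitely many iterations both the map and its inverse are one-block maps, so the composite is an alphabet bijection. Since $e \in S \cap S^{-1}$, every alphabet bijection already lies in $H_X$; solving back for $\varphi$ exhibits it as a composition of elements of $H_X \cup H_X^{-1}$, and Theorem \ref{thm:simply-connected} then finishes the proof.

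The only real subtlety I anticipate is maintaining $S'$-connectedness of the neighborhoods throughout both stages of the reduction, and ensuring that $e$ remains in the neighborhood so that the reduction lemmas remain applicable. Both are guaranteed by the spanning-tree leaf argument of Lemma \ref{lem:smaller-neighborhood}, which explicitly avoids removing $e$ and preserves $S'$-connectedness at every step. Beyond this bookkeeping, the proof is a straightforward two-stage induction built from the two reduction lemmas, with Theorem \ref{thm:simply-connected} doing the actual homotopical work.
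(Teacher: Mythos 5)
Your proposal is correct and follows essentially the same route as the paper: reduce $\varphi$ to a one-block map by iterating \Cref{lem:higherblock}, then reduce its inverse to a one-block map by iterating \Cref{lem:generate-mainstep}, conclude with an alphabet bijection in $H_X$, and finally invoke \Cref{lem:refinment-ongroups} together with \Cref{thm:simply-connected}. The extra care you take about enlarging the initial neighborhood to an $S'$-connected set containing $e$ is a reasonable bit of bookkeeping that the paper leaves implicit (note only the small terminological slip where you say ``post-composing'' while the displayed formula $\varphi\circ\psi_1\circ\cdots\circ\psi_m$ correctly shows pre-composition).
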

\begin{proof}
  We first show that $H_X$ indeed generates $\Gamma_X$.
  For this it is enough to show that every
  conjugacy $\varphi: X  \to Y$ for a $S$-Markov shift $Y$
  is a concatenation of elementary conjugacies in $H \cup H^{-1}$.
  By repetitively applying \eqref{lem:higherblock} we can precompose $\varphi$
  with conjugacies from $H \cup H^{-1}$ such that
  the resulting map is a one-block map from
  some subshift $\tilde{X} \in \Gamma_X^0$ to $Y$.
  We may thus assume that $\varphi$ is already a one-block map.
  
  Now applying \Cref{lem:generate-mainstep} repetitively
  we can pre- and postcompose $\varphi$ by elements
  of $H \cup H^{-1}$ 
  keeping it a one-block map
  and decreasing the size of the neighborhood
  of the inverse by one in each step.
  We end up with a one-block map whose
  inverse is also a one-block map,
  in other words, it is an alphabet bijection and thus contained in
  $H$.
  This shows that $\varphi$ can be represented
  as the composition of elements in $H$ and $H^{-1}$.
  Homotopically unique generation is now a direct consequence of
  \Cref{lem:refinment-ongroups} and 
  \Cref{thm:simply-connected}.
\end{proof}

\begin{cor}\label{cor:fundamenatl-S-Markov}
  For every $S$-Markov shift $X \subseteq K^\CG$ the automorphism group $\Aut(X)$ is isomorphic
  to the fundamental group $\pi_1(\NN(\Gamma_X,H_X),X)$.
\end{cor}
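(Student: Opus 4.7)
The plan is to observe that this corollary is essentially the concatenation of the two main results we have already proved: \Cref{thm:hug-over-groups}, which tells us that $H_X$ homotopically uniquely generates $\Gamma_X$, and \Cref{thm:covering}, which turns homotopically unique generation into an isomorphism between the combinatorial fundamental groupoid of the nerve and the underlying groupoid.

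More concretely, I would proceed as follows. First, by \Cref{lem:refinment-ongroups} the triple $(\cong, (H_n)_n, (\delta_n)_n)$ constructed in \Cref{sec:markov-shifts-fg} is a refinement structure for $(\Gamma_X, H_X)$, so \Cref{thm:simply-connected} applies and gives that $\PP(\Gamma_X, H_X, Y)$ is simply connected for every $Y \in \Gamma_X^0$. Second, invoking \Cref{thm:covering} with $\Gamma = \Gamma_X$ and $H = H_X$, the map
\[
\omega: \pi_1(\NN(\Gamma_X, H_X)) \to \Gamma_X, \quad (\varphi_1, \dots, \varphi_n) \mapsto \varphi_n \circ \dots \circ \varphi_1
\]
is an isomorphism of groupoids.

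To extract the statement about $\Aut(X)$, I would then restrict $\omega$ to the vertex group based at $X$. On the domain side, the vertex group of the combinatorial fundamental groupoid at the object $X$ is by definition $\pi_1(\NN(\Gamma_X, H_X), X)$. On the codomain side, the vertex group of $\Gamma_X$ at $X$ consists of morphisms $X \to X$ in $\Gamma_X$, i.e., invertible sliding block maps $X \to X$ commuting with the $\CG$-action; by the Curtis--Lyndon--Hedlund theorem these are precisely the elements of $\Aut(X)$. Since an isomorphism of groupoids restricts to isomorphisms of all vertex groups, the restriction of $\omega$ gives the desired isomorphism.

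There is no real obstacle here, since the work has already been done in \Cref{thm:hug-over-groups} and \Cref{thm:covering}. The only points worth being explicit about are the identification of the vertex group of $\Gamma_X$ at $X$ with $\Aut(X)$ and the fact that $X \in \Gamma_X^0$ by construction, so both vertex groups are well-defined.
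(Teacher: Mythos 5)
Your proof is correct and matches the paper's intent exactly: the corollary has no written proof in the paper precisely because it follows by combining \Cref{thm:hug-over-groups} (via \Cref{lem:refinment-ongroups} and \Cref{thm:simply-connected}) with \Cref{thm:covering} and restricting the resulting groupoid isomorphism $\omega$ to the vertex group at $X$. Your identification of the vertex group of $\Gamma_X$ at $X$ with $\Aut(X)$ via Curtis--Lyndon--Hedlund is the only bookkeeping step needed, and you handle it correctly.
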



\section{The Matrix Equations}
\label{sec:matrix-eq}
For $\CG=\Z$ there is a natural choice for a generating set containing
the identity, namely $S=\{0,1\}$. For this choice of
generating set $S$-Markov shifts correspond
to the classical topological Markov shifts as discussed in
\Cref{sec:strong-shift-eq} and the set
of elementary conjugacies discussed there corresponds
to the elements of $H$ defined in \Cref{sec:markov-shifts-fg}.
We saw that every such elementary conjugacy can
be described by a pair of matrices $(R,S)$.
If we manage to express the fact that $\varphi_1\circ
\varphi_2=\varphi_3$
in terms of the corresponding matrices, we
can get a presentation of the complex of elementary conjugacies in
purely algebraic terms. This is the goal of this section.

We start by actually proving the correspondence
between elementary conjugacies and elementary strong shift equivalences.

\begin{thm}\label{thm:elementary-from-algebraic}
	Let $X_A$ and $X_B$ be Markov shifts defined by non-degenerate matrices 
	$A \in \{0,1\}^{m \times m}$ and $B \in \{0,1\}^{n \times n}$.
	If $R \in \{0,1\}^{m \times n}$ and $S \in \{0,1\}^{n \times m}$
	are matrices such that $A=RS$ and $B=SR$
   then there is a uniquely defined elementary conjugacy $\varphi_{R,S}: X_A \to X_B$ with
	\begin{align}
		(\varphi_{R,S})_{\loc}(a,a')=b &\iff R_{a,b}S_{b,a'}=1, \label{eq:RS-1}\\
		(\varphi^{-1}_{R,S})_{\loc}(b,b')=a &\iff S_{b,a}R_{a,b'}=1.\label{eq:RS-2} 
	\end{align}
\end{thm}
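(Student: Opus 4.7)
The plan is to construct $\varphi_{R,S}$ directly from the local rule \eqref{eq:RS-1} and then verify each required property. First, I observe that $A=RS$ together with the fact that $A$, $R$ and $S$ all take values in $\{0,1\}$ forces that for every pair $(a,a')$ with $A_{a,a'}=1$ there is exactly one $b$ with $R_{a,b}S_{b,a'}=1$ (and none if $A_{a,a'}=0$), because $A_{a,a'}=\sum_b R_{a,b}S_{b,a'}$ must equal either $0$ or $1$. Thus \eqref{eq:RS-1} unambiguously prescribes a block code of neighborhood $S=\{0,1\}$ on the allowed two-patterns of $X_A$, and so defines a shift-commuting map on $X_A$. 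Uniqueness is then automatic, since a sliding block map is determined by its local rule.

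Second, I check that $\varphi_{R,S}(X_A) \subseteq X_B$. For $x \in X_A$ with consecutive allowed transitions $(x_i,x_{i+1})$ and $(x_{i+1},x_{i+2})$, let $b_i$ and $b_{i+1}$ be the corresponding output symbols. Then $S_{b_i,x_{i+1}}=1$ and $R_{x_{i+1},b_{i+1}}=1$, so $(SR)_{b_i,b_{i+1}}\geq 1$; but $B=SR$ is $\{0,1\}$-valued, hence $B_{b_i,b_{i+1}}=1$. The same dichotomy applied with the roles of $(A,R,S)$ and $(B,S,R)$ swapped uses $B=SR$ to show that \eqref{eq:RS-2} likewise defines a sliding block map $\psi: X_B \to X_A$ of neighborhood $S^{-1}=\{-1,0\}$.

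Third, I verify that $\psi$ is a two-sided inverse of $\varphi_{R,S}$ by a direct bead-counting argument. Given $x \in X_A$ and intermediate symbols $b_{i-1}, b_i$ produced by the forward rule, the identities $R_{x_{i-1},b_{i-1}}S_{b_{i-1},x_i}=1$ and $R_{x_i,b_i}S_{b_i,x_{i+1}}=1$ yield $S_{b_{i-1},x_i}R_{x_i,b_i}=1$; since $B=SR$ is $\{0,1\}$-valued, $x_i$ is the unique $a$ with $S_{b_{i-1},a}R_{a,b_i}=1$, so $\psi(\varphi_{R,S}(x))_i=x_i$. The symmetric argument gives $\varphi_{R,S}\circ\psi=\id_{X_B}$. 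Consequently $\varphi_{R,S}$ is a homeomorphism with neighborhood $S$ and inverse of neighborhood $S^{-1}$, i.e.\ an elementary conjugacy.

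The principal subtlety throughout is not the juggling of block codes but the fact that $A=RS$ and $B=SR$ are assumed to be $\{0,1\}$-valued rather than merely non-negative; this is precisely what upgrades existence of a factoring symbol to its uniqueness, and is what makes the whole construction go through. Once this observation is in hand, every remaining step reduces to a routine reading-off of the matrix relations at the level of local rules.
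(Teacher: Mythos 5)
Your proof is correct and follows essentially the same route as the paper's: extract uniqueness of the factoring symbol from the fact that $A=RS$ and $B=SR$ are $\{0,1\}$-valued, define the two local rules, and verify they are mutual inverses by reading off the matrix relations. You are slightly more careful than the paper in explicitly checking $\varphi_{R,S}(X_A)\subseteq X_B$, which the paper leaves implicit in the inverse calculation, but the underlying argument is identical.
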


\begin{proof}
  Since $A$ and $B$ are non-degenerate $\{0,1\}$-matrices with $A=RS$ and
  $B=SR$, for each $a,a' \in \{1,\dots,m\}$ with $A_{a,a'}=1$ and $b,b' \in
  \{1,\dots,n\}$ with $B_{b,b'}=1$ there are uniquely defined elements
  $c \in \{1,\dots,n\}$
  and $d \in \{1,\dots,m\}$ with $R_{a,c}S_{c,a'}=1$ and $S_{b,d}S_{d,b'}=1$.
  Hence we can define two maps with neighborhood $\{0,1\}$ and  $\{-1,0\}$, respectively,
  by
  \begin{align}
    	\varphi_\loc(a,a')=c &\iff R_{a,c}S_{c,a'}=1, \label{eq:RS-1}\\
		\psi_\loc(b,b')=d &\iff S_{b,d}R_{d,b'}=1.\label{eq:RS-2}.
  \end{align}
  Let $x \in X_A$ and $y:=\varphi(x)$.
  To prove our theorem, it is enough to show that $x=\psi(y)$.
  We have
  $R_{x_{i}y_{i}}S_{y_{i},x_{i+1}}=1=R_{x_{i-1},y_{i-1}}S_{y_{i-1},x_i}$,
  hence $S_{y_{i-1},x_{i}}R_{x_i,y_i}$ and thus
  $\psi(y)_i=\psi_\loc(y_{i-1},y_i)=x_i$.
  Hence $\varphi\circ \psi = \id$. That $\psi \circ \varphi=\id$
  follows by symmetry.
\end{proof}

\begin{thm}\label{thm:algebraic-from-elementary}
  Let $X_A$ and $X_B$ be Markov shifts defined by non-degenerate
  $\{0,1\}$-matrices $A \in \{0,1\}^{m \times m}$ and
  $B \in \{0,1\}^{n \times n}$ and let $\varphi: X_A \to X_B$ be an
  elementary conjugacy. Define a pair of matrices
  $R_\varphi \in \{0,1\}^{m \times n}$ and
  $S_\varphi \in \{0,1\}^{n \times m}$ as follows.
	\begin{align*}
		(R_\varphi)_{a,b}=1 &\iff \exists a' \in \{1,\dots,m\}: \varphi_{\loc}(aa')=b, \\
		(S_\varphi)_{b,a}=1 &\iff \exists b' \in \{1,\dots,n\}: \varphi^{-1}_{\loc}(bb')=a.
	\end{align*}
	Then $R_\varphi S_\varphi =A$ and $S_\varphi R_\varphi=B$.
\end{thm}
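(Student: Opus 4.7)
The plan is to reduce both identities to a single cut-and-paste lemma about allowed patterns. First I would pin down the conventions forced by \Cref{thm:elementary-from-algebraic}: $\varphi$ has neighborhood $\{0,1\}$ acting by $\varphi(x)_i = \varphi_\loc(x_i, x_{i+1})$, while $\varphi^{-1}$ has neighborhood $\{-1,0\}$ acting by $\varphi^{-1}(y)_i = \varphi^{-1}_\loc(y_{i-1}, y_i)$. Under these conventions, $(R_\varphi)_{a,b} = 1$ is equivalent to the existence of some $x \in X_A$ with $x_0 = a$ and $\varphi(x)_0 = b$, and, using shift-invariance, $(S_\varphi)_{b,a'} = 1$ is equivalent to the existence of some $y' \in X_B$ with $y'_0 = b$ and $\varphi^{-1}(y')_1 = a'$.

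The central lemma I would prove is: if $(R_\varphi)_{a,b} = 1$ and $(S_\varphi)_{b,a'} = 1$, then $A_{a,a'} = 1$ and $b = \varphi_\loc(a,a')$. The argument is a splice. Take witnesses $x \in X_A$ and $y' \in X_B$ as above and set $y := \varphi(x)$. Then $y$ and $y'$ both carry the symbol $b$ at coordinate $0$, so the sequence $y'' \in K^{\Z}$ defined by $y''_i := y_i$ for $i \le 0$ and $y''_i := y'_i$ for $i \ge 0$ lies in $X_B$: the one-step transition constraints at the seams $i = -1$ and $i = 0$ are inherited from $y$ and $y'$ respectively. Applying $\varphi^{-1}$ and evaluating locally at the seam yields $\varphi^{-1}(y'')_0 = \varphi^{-1}_\loc(y_{-1}, y_0) = a$ and $\varphi^{-1}(y'')_1 = \varphi^{-1}_\loc(y'_0, y'_1) = a'$, so $w := \varphi^{-1}(y'') \in X_A$ has $w_0 w_1 = aa'$. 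This forces $A_{a,a'} = 1$, and pushing $w$ back through $\varphi$ gives $\varphi_\loc(a,a') = \varphi(w)_0 = y''_0 = b$.

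Granted the lemma, the identity $R_\varphi S_\varphi = A$ drops out at once: in the integer-valued sum $(R_\varphi S_\varphi)_{a,a'} = \sum_b (R_\varphi)_{a,b}(S_\varphi)_{b,a'}$, any contributing $b$ must equal $\varphi_\loc(a,a')$, so the sum is bounded by one, and any contribution forces $A_{a,a'} = 1$; conversely, when $A_{a,a'} = 1$ one extends the pattern $aa'$ to some $x \in X_A$ and sets $b := \varphi(x)_0$, obtaining a contributing term with witnesses $a'$ for $R_\varphi$ and $\varphi(x)_1$ for $S_\varphi$. The identity $S_\varphi R_\varphi = B$ is proved by the mirror construction: splice two elements of $X_A$ at a shared symbol $a$ and push forward through $\varphi$ rather than pulling a spliced $y'' \in X_B$ back through $\varphi^{-1}$. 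The only delicate point I anticipate is keeping the position shifts between $\varphi$ and $\varphi^{-1}$ consistent; this is precisely what the splicing argument is designed to absorb, so once the conventions are nailed down the rest is formal matrix bookkeeping.
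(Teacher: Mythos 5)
Your proof is correct and the core device is the same as the paper's: characterize $(R_\varphi)_{a,b}(S_\varphi)_{b,a'}=1$ as equivalent to $A_{a,a'}=1$ with $b=\varphi_\loc(a,a')$, proved by splicing two configurations at the shared symbol $b$ and pulling back through $\varphi^{-1}$. The only divergence is in deriving $S_\varphi R_\varphi = B$: you re-run the splice argument in mirror form in $X_A$, whereas the paper observes that $\sigma\circ\varphi^{-1}$ is again an elementary conjugacy with $R_{\sigma\circ\varphi^{-1}}=S_\varphi$ and $S_{\sigma\circ\varphi^{-1}}=R_\varphi$, so the second identity follows from the first applied to $\sigma\circ\varphi^{-1}$; your route is a bit more work but equally valid.
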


Before we come to the proof we state and prove a little lemma.
\begin{lem}\label{lem:R-characterization}
  Let $X_A, X_B, \varphi, R_\varphi$ as in the statement of
  \Cref{thm:algebraic-from-elementary}.  For
  $a \in \{1,\dots,m\}, b \in \{1,\dots,n\}$ the following are equivalent.
	\begin{enumerate}[(i)]
		\item\label{enum:R-chara-1} $(R_\varphi)_{a,b}=1$,
		\item\label{enum:R-chara-2} $\exists a' \in \{1,\dots,m\}: \varphi_{\loc}(a,a')=b$,
		\item\label{enum:R-chara-3} $\exists b' \in \{1,\dots,n\}: \varphi^{-1}_{\loc}(b',b)=a$,
		\item\label{enum:R-chara-4} $\exists x \in X_A, y\in X_B: x_0=a, y_0=b, y=\varphi(x)$ .
	\end{enumerate}
\end{lem}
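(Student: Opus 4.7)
The equivalence (\ref{enum:R-chara-1}) $\Leftrightarrow$ (\ref{enum:R-chara-2}) is immediate from the definition of $R_\varphi$, so the real content is the equivalence of (\ref{enum:R-chara-2}), (\ref{enum:R-chara-3}) and (\ref{enum:R-chara-4}). The plan is to prove the cycle (\ref{enum:R-chara-2}) $\Rightarrow$ (\ref{enum:R-chara-4}) $\Rightarrow$ (\ref{enum:R-chara-3}) $\Rightarrow$ (\ref{enum:R-chara-4}) $\Rightarrow$ (\ref{enum:R-chara-2}), using at the crucial points that elementary conjugacies have neighborhood $\{0,1\}$ (resp.\ $\{-1,0\}$ for the inverse) and that the non-degeneracy of $A$ and $B$ allows one to extend any admissible pair of coordinates to a bi-infinite point of the shift.

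First, for (\ref{enum:R-chara-2}) $\Rightarrow$ (\ref{enum:R-chara-4}), suppose $\varphi_\loc(a,a') = b$. Then $A_{a,a'} = 1$ since $\varphi_\loc$ is only defined on admissible pairs, and non-degeneracy of $A$ lets us extend $(a,a')$ both to the right and to the left inside $X_A$ to obtain a point $x$ with $x_0 = a$, $x_1 = a'$. Setting $y := \varphi(x)$ gives $y_0 = \varphi_\loc(x_0,x_1) = b$, establishing (\ref{enum:R-chara-4}). The converse (\ref{enum:R-chara-4}) $\Rightarrow$ (\ref{enum:R-chara-2}) is a direct unwinding: since $\varphi$ has neighborhood $\{0,1\}$, the equality $y_0 = \varphi(x)_0 = \varphi_\loc(x_0,x_1)$ forces $\varphi_\loc(a,x_1) = b$, so $a' := x_1$ works.

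The arguments for (\ref{enum:R-chara-3}) go through in exactly the same way after shifting indices: from (\ref{enum:R-chara-4}), the relation $x = \varphi^{-1}(y)$ together with the neighborhood $\{-1,0\}$ of $\varphi^{-1}$ gives $a = x_0 = \varphi^{-1}_\loc(y_{-1},y_0) = \varphi^{-1}_\loc(y_{-1},b)$, so $b' := y_{-1}$ yields (\ref{enum:R-chara-3}). Conversely, if $\varphi^{-1}_\loc(b',b) = a$, then $B_{b',b} = 1$, and non-degeneracy of $B$ provides $y \in X_B$ with $y_{-1} = b'$, $y_0 = b$; setting $x := \varphi^{-1}(y)$ then gives (\ref{enum:R-chara-4}) with $x_0 = \varphi^{-1}_\loc(b',b) = a$.

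I do not expect a serious obstacle; everything amounts to carefully exploiting the prescribed neighborhoods together with the standing non-degeneracy hypothesis. The only point that requires a small moment of care is ensuring that the extensions of $(a,a')$ and $(b',b)$ to full bi-infinite sequences are legitimate, which is precisely where non-degeneracy of $A$ and $B$ is used; without this hypothesis one would lose the implications (\ref{enum:R-chara-2}) $\Rightarrow$ (\ref{enum:R-chara-4}) and (\ref{enum:R-chara-3}) $\Rightarrow$ (\ref{enum:R-chara-4}).
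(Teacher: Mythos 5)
Your proof is correct and follows essentially the same route as the paper: (i)\,$\Leftrightarrow$\,(ii) is definitional, (ii)\,$\Leftrightarrow$\,(iv) comes from extending the admissible pair $(a,a')$ to a bi-infinite point via non-degeneracy and reading off $\varphi_\loc$, and (iii)\,$\Leftrightarrow$\,(iv) is the same argument for $\varphi^{-1}$ with neighborhood $\{-1,0\}$. The only difference is that you write out the (iii)\,$\Leftrightarrow$\,(iv) step explicitly where the paper appeals to the symmetry of the argument just given.
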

\begin{proof}
	By definition properties \eqref{enum:R-chara-1} and \eqref{enum:R-chara-2} are equivalent.
	We now show the equivalence of   \eqref{enum:R-chara-2} and  \eqref{enum:R-chara-4}.
	Property \eqref{enum:R-chara-4} immediately implies \eqref{enum:R-chara-2}
	as we can choose $a':=x_1$.
	On the other hand if $a,a'$ and $b$ are such that $\varphi_\loc(a,a')=b$, then 
	we can find a point $x \in X_A$ with $x_0=a, x_1=a'$. Setting $y:=\varphi(x)$
	gives  \eqref{enum:R-chara-4}.
	Finally, \eqref{enum:R-chara-4} is equivalent to the existence of $x \in X_A, y\in X_B,
	x_0=a, y_0=b$ and $x =\varphi^{-1}(x)$. As we just saw, this is equivalent to
	\eqref{enum:R-chara-3}.
\end{proof}

\begin{proof}[Proof of \Cref{thm:algebraic-from-elementary}]
	We will show that $(R_{\varphi})_{a,b}(S_{\varphi})_{b,c}=1$
	iff $A_{a,c}=1$ and $b=\varphi_{\loc}(a,c)$.
	This will directly imply $R_\varphi S_\varphi=A$,
	since for $A_{a,c}=1$ we have 
	\begin{align*}
     (R_{\varphi}S_{\varphi})_{a,c}
     = \sum_{b \in \{1,\dots,n\}}  (R_{\varphi})_{a,b}(S_{\varphi})_{b,c}
     = \sum_{b=\varphi_{\loc}(ac)} (R_{\varphi})_{a,b}(S_{\varphi})_{b,c}
     =1
	\end{align*}
	and for $A_{a,c}=0$ we have
	\begin{align*}
     (R_{\varphi}S_{\varphi})_{a,c}
     =\sum_{b \in \{1,\dots,n\}} (R_{\varphi})_{a,b}(S_{\varphi})_{b,c}
     =0.
	\end{align*}
	
	First let $b=\varphi_{\loc}(a,c)$.
   There is $x \in X_A$ with $x_0=a,x_1=c$ and $\varphi(x)_0=b$.
   Hence for $b':=\varphi(x)_{1}$ we have $\varphi_\loc^{-1}(b,b')=c$,
   showing $(R_\varphi)_{a,b}(S_\varphi)_{b,c}=1$.
   
	On the other hand assume that
   $(R_{\varphi})_{a,b}(S_{\varphi})_{b,c}=1$
   for some $b$.
   We can find $\tilde{x} \in X_A$ and $\tilde{y}' \in X_B$
   such that $\tilde{x}_0=a, \varphi(\tilde{x})_0=b=\tilde{y}'_0$
   and $\varphi^{-1}(\tilde{y}')_1=c$.
   Set $\tilde{y}:=\varphi(\tilde{x})$ and $\tilde{x}':=\varphi^{-1}(\tilde{y})$.
    Define $y \in X_B$
   by
   \begin{align*}
     y_{i} = \begin{cases}
       \tilde{y}_i& \text{for } i\leq 0 \\
       \tilde{y}'_i& \text{for } i> 0
     \end{cases}
   \end{align*}
   and $x:=\varphi^{-1}(y)$.
   Then $x_0=\varphi^{-1}(\tilde{y})=a,
   x_1=\varphi^{-1}(\tilde{y}')=c$
   and $y_0=b$, hence $b=\varphi_\loc(a,c)$.

   Now if $\varphi$ is an elementary conjugacy, so
   is $\varphi^{-1} \circ \sigma$ as
   \begin{align*}
     (\varphi^{-1}\circ \sigma)(y)_i&=\sigma(\varphi^{-1}(y))_i
                                      =\varphi^{-1}(y)_{i+1} \\
                              &=\varphi_\loc^{-1}(y_i,y_{i+1}),  \\
     (\varphi^{-1}\circ \sigma)^{-1}(y)_i&=\sigma^{-1}(\varphi(y))_i=\varphi(y)_{i-1} \\
                              &=\varphi_\loc(y_{i-1},y_i).     
   \end{align*}
   It is also easy to see that $R_{\sigma \circ \varphi^{-1}} =
   S_{\varphi}$
   and $S_{\sigma \circ \varphi^{-1}} =
   R_{\varphi}$.
	Hence $B=R_{\sigma \circ \varphi^{-1}}S_{\sigma \circ \varphi^{-1}}=S_\varphi R_\varphi$.
\end{proof}

We now capture commuting triangles in the complex
of elementary conjugacies by three triangle equations
for the corresponding matrix pairs.
\begin{thm}
  \label{thm:triangle-equations}
	Let the following be a (not necessarily commuting) triangle of elementary 
	conjugacies.
	\begin{center}
		\begin{tikzcd}
			& X_B \arrow[rd,"\varphi_2"] &\\
			X_A \arrow[ru,"\varphi_1"] \arrow[rr,"\varphi_3" below]& &X_C 
		\end{tikzcd}
	\end{center}
	Set $R_i=R_{\varphi_i}$ and $S_i=S_{\varphi_i}$ for $i=1,2,3$.
	Then the triangle 
	commutes if and only if the following three \emph{triangle equations} hold
	\begin{align*}
	R_1R_2&=R_3, \\
	R_2S_3&=S_1, \\
	S_3R_1&=S_2.
	\end{align*}.
\end{thm}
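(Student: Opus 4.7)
The plan is to analyze both directions entry-wise using the combinatorial characterizations of $R_\varphi$ and $S_\varphi$ from \Cref{thm:elementary-from-algebraic} and \Cref{lem:R-characterization} (together with its obvious dual for $S_\varphi$). The three equations are linked by the duality $\varphi \mapsto \sigma \circ \varphi^{-1}$ exploited at the end of the proof of \Cref{thm:algebraic-from-elementary}, which swaps $R$ with $S$ and cyclically permutes the three vertices; so each direction really only requires treating one equation carefully.

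For the reverse direction I would fix $x \in X_A$ and, by shift-equivariance, reduce to showing $\varphi_3(x)_0 = \varphi_2(\varphi_1(x))_0$. Setting $b := \varphi_1(x)_0$, $b' := \varphi_1(x)_1$, $c := \varphi_3(x)_0$, and $c' := \varphi_{2,\loc}(b, b')$, \Cref{thm:elementary-from-algebraic} gives the four product identities $(R_1)_{x_0, b}(S_1)_{b, x_1} = 1$, $(R_1)_{x_1, b'}(S_1)_{b', x_2} = 1$, $(R_3)_{x_0, c}(S_3)_{c, x_1} = 1$, and $(R_2)_{b, c'}(S_2)_{c', b'} = 1$. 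Using $R_1 R_2 = R_3$ with witness $b$ shows $(R_3)_{x_0, c'} = 1$. For $(S_3)_{c', x_1} = 1$ I use $R_2 S_3 = S_1$ at entry $(b, x_1)$: the sum $\sum_{c^*} (R_2)_{b, c^*}(S_3)_{c^*, x_1} = 1$ has a unique nonzero term $c^*$, and applying $S_3 R_1 = S_2$ one sees that $c^*$ forces $(R_2)_{b, c^*}(S_2)_{c^*, b'} = 1$, which by uniqueness in $R_2 S_2 = B$ at entry $(b, b')$ forces $c^* = c'$. Hence both $c$ and $c'$ satisfy $(R_3)_{x_0, \,\cdot\,}(S_3)_{\,\cdot\,, x_1} = 1$, and uniqueness in $R_3 S_3 = A$ yields $c = c'$.

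For the forward direction, given $\varphi_3 = \varphi_2 \circ \varphi_1$, I would interpret $(R_1 R_2)_{a,c} = \sum_b (R_1)_{a,b}(R_2)_{b,c}$ as counting intermediate symbols $b$ admitting witnesses $a', b'$ with $\varphi_{1,\loc}(a, a') = b$ and $\varphi_{2,\loc}(b, b') = c$. Given any such triple, I would build $x \in X_A$ with $x_0 = a$ and $\varphi_3(x)_0 = c$ by choosing $y \in X_B$ through $(y_{-1}, b, b')$ with $y_{-1}$ picked so that $\varphi_1^{-1}(y)_0 = a$ (possible because $(R_1)_{a, b} = 1$ furnishes such a $y_{-1}$, and non-degeneracy of $B$ allows extending to a bi-infinite point), and setting $x := \varphi_1^{-1}(y)$; commutativity gives $\varphi_3(x)_0 = c$. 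Conversely, every such $x$ yields exactly one candidate $b := \varphi_1(x)_0$. The main obstacle I anticipate is the uniqueness half: since the product of two $\{0,1\}$-matrices is not generally $\{0,1\}$-valued, the equation $R_1 R_2 = R_3$ encodes that at most one $b$ contributes for each $(a, c)$, which must be extracted from the bijectivity of $\varphi_1$ combined with $\varphi_3$ having 2-block neighborhood. The remaining two equations then follow by applying the same argument to the triangles obtained via the duality $\varphi \mapsto \sigma \circ \varphi^{-1}$.
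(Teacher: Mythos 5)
Your reverse direction is correct and is essentially the paper's own argument, with the roles $y_0, z_0, w_0, i$ in the paper corresponding to your $b, c', c, c^*$. The chain of deductions — verify $(R_3)_{x_0,c'}=1$ via $R_1R_2=R_3$, locate the unique $c^*$ with $(R_2)_{b,c^*}(S_3)_{c^*,x_1}=1$ via $R_2S_3=S_1$, multiply by $(R_1)_{x_1,b'}$ and invoke $S_3R_1=S_2$ to pin down $c^*=c'$ using uniqueness in $R_2S_2=B$, then conclude $c=c'$ by uniqueness in $R_3S_3=A$ — is precisely what the paper does. Likewise your use of the duality $\varphi\mapsto\sigma\circ\varphi^{-1}$ to derive the remaining two equations matches the paper.

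The forward direction, however, contains a genuine gap which you yourself flag. Your constructions establish that $(R_1R_2)_{a,c}>0$ iff $(R_3)_{a,c}=1$, i.e.\ the supports coincide; but $R_1R_2=R_3$ additionally requires $(R_1R_2)_{a,c}\leq 1$, i.e.\ that at most one intermediate symbol $b$ has $(R_1)_{a,b}(R_2)_{b,c}=1$. Appealing to ``the bijectivity of $\varphi_1$ combined with $\varphi_3$ having $2$-block neighborhood'' names the right ingredients but is not a proof. What is actually needed — and what the paper does — is a \emph{splicing} argument: given witnesses $x,y,z$ for $b$ and $x'',y'',z''$ for a putative second symbol $b'$, form $\hat{x}$ and $\hat{z}$ by gluing the left halves ($i\leq 0$) of $x,z$ to the right halves ($i>0$) of $x'',z''$; the $2$-block conditions ensure $\hat{x}\in X_A$, $\hat{z}\in X_C$ and $\varphi_3(\hat{x})=\hat{z}$, and then computing $\varphi_1(\hat{x})_0$ from the left via $\varphi_2^{-1}(\hat{z})_0$ (which sees only $z_{-1},z_0$, hence gives $b$) and from the right via the local rule of $\varphi_1$ applied to $(\hat{x}_0,\hat{x}_1)=(a,x''_1)$ (which gives $b'$) forces $b=b'$. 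Without this or an equivalent interleaving construction, the uniqueness half of $R_1R_2=R_3$ is not established.
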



\begin{proof}
  Assume the triangle commutes.
	Let $a,b,c$ be symbols such that $(R_1)_{a,b}(R_2)_{b,c}=1$.
   By \Cref{lem:R-characterization} we know that
   there are $\tilde{x} \in X_A, \tilde{y} \in X_B, \tilde{y}' \in Y_B$ and $\tilde{z} \in X_C$
   with $\tilde{x}_0=a,\tilde{y}_0=\tilde{y}'_0=b$ and $\tilde{z}_0=c$ such that
   $\tilde{y}=\varphi_1(\tilde{x})$ and $\tilde{z}=\varphi_2(\tilde{y}')$.
   Define $y \in X_B$
   by
   \begin{align*}
     y_{i} = \begin{cases}
       \tilde{y}_i& \text{for } i\leq 0 \\
       \tilde{y}'_i& \text{for } i> 0
     \end{cases}.
   \end{align*}.
   Set $x:=\varphi_1^{-1}(\tilde{y})$ and
   $\tilde{z}:=\varphi_2(\tilde{y})$.
   We thus found configurations $x,y,z$ with
   \begin{align*}
     x_0&=\varphi_1^{-1}(\tilde{y})_0=a, \\
	  y_0&=\tilde{y}_0=b, \\
	  z_0&=\varphi_2(\tilde{y}')_0=c, \\
	  y&=\varphi_1(x), z=\varphi_2(y).
   \end{align*}
   This implies in particular that $(R_3)_{a,c}=1$.

   Now assume there is another symbol $b'$ with
   $(R_1)_{a,b'}(R_2)_{b',c}$.
   As we saw, there must be elements $x'',y'',z''$ with
   $x''_0=a,y''_0=b', z''_0=c$ such that
   $y''=\varphi_1(x'')$ and $z''=\varphi_2(y'')$.
   Define $\hat{x} \in X_A$ and $\hat{z} \in X_C$ by
   \begin{align*}
     \hat{x}_i &= \begin{cases}
       x_i &\text{for } i\leq 0 \\
       x''_i &\text{for } i >0
     \end{cases},
     &
     \hat{z}_i &= \begin{cases}
       z_i &\text{for } i\leq 0 \\
       z''_i &\text{for } i >0
     \end{cases}.
   \end{align*}
   Now $\varphi_3(\hat{x})=\hat{z}$ and hence
   $b=\varphi_1(\tilde{x})_0=\varphi_2^{-1}(\hat{z})_0=\tilde{b}$.
   This shows $(R_3)_{a,c} \geq (R_1R_2)_{a,c}$ for all
   pairs of symbols $a,c$.
   To see the converse inequality consider $a,c$
   with $(R_3)_{a,c}=1$. By \Cref{lem:R-characterization}
   there is $x,z$ with $\varphi_3(x)=z$ and $x_0=a,z_0=c$.
   Define $y:=\varphi_1(x)$. Then
   $\varphi_2(y)=\varphi_2(\varphi_1(x))=z$
   and hence $(R_1)_{a,y_0}(R_2)_{y_0,b}=1$.
   All in all this shows $R_1R_2=R_3$.
   Recall that for every elementary conjugacy $\varphi$,
   so conjugacy $\varphi^{-1} \circ \sigma$ is also elementary.
   Hence, from the commuting triangle we started with we get two other
   commuting triangles.
   \begin{center}
		\begin{tikzcd}
        & X_B \arrow[rd,leftarrow,"\sigma \circ\varphi_2^{-1}"] &\\
        X_A \arrow[ru,"\varphi_1"] \arrow[rr,leftarrow,"\sigma \circ
        \varphi_3^{-1}" below]& &X_C
		\end{tikzcd}
      \hspace{1cm}
      \begin{tikzcd}
        & X_B \arrow[rd,"\varphi_2"] &\\
        X_A \arrow[ru,leftarrow,"\sigma \circ \varphi_1^{-1}"]
        \arrow[rr,leftarrow,"\sigma \circ \varphi_3^{-1}" below]& &X_C
		\end{tikzcd}
    \end{center}
    Calculating the first triangle equation for these
    triangles we obtain
    \begin{align*}
      R_{\sigma \circ \varphi_3^{-1}} R_{\varphi_1}
      &= S_3 R_1 = R_{\sigma \circ \varphi_2^{-1}} = S_2, \\
      R_{\varphi_2} R_{\sigma \circ \varphi_3^{-1}}
      &= R_2 S_3 = R_{\sigma \circ \varphi_1^{-1}} = S_1.
    \end{align*}
    
	Now assume the triangle equations hold.
	Let $x \in X_A$, $y:= \varphi_1(x)$, $z:=\varphi_2(y)$, $w:=\varphi_3(x)$.
	We have to show that $x=w$. Because $x$ is arbitrary and all maps under consideration are
	shift invariant it is enough to show $x_0=w_0$.
	Since $w_0$ is the unique index such that $(R_3)_{x_0 w_0} (S_3)_{w_0 x_1}=1$,
	we have to show that $(R_3)_{x_0 z_0} (S_3)_{z_0 x_1}=1$.
	We know that 
	\begin{align*}
		(R_3)_{x_0 z_0} &\geq  (R_1)_{x_0 y_0} (R_2)_{y_0 z_0} = 1.
	\end{align*}
	Since all matrices we consider have entries in $\{0,1\}$, this implies
	$(R_3)_{x_0 z_0}=1$.
	Since $1 = (S_1)_{y_0 x_1}= (R_2S_3)_{y_0 x_1}$,
	there is a unique index $i$ such that
	$1 = (R_2)_{y_0 i}(S_3)_{i x_1}$. Multiplication by $(R_1)_{x_1 y_1}$ on the right 
	gives \[1=(R_2)_{y_0 i}(S_3)_{i x_1}(R_1)_{x_1 y_1}=(R_2)_{y_0 i}(S_2)_{i y_1}.\]
	Hence $i=z_0$ and $(R_3)_{x_0 z_0} (S_3)_{z_0 x_1}=1$ as required. \qedhere
	
\end{proof}

In light of these results we construct the following $\Delta$-complex in purely algebraic terms.
\begin{defn}\label{def:SSE}
	Let $Q$ be a subset of some semiring.
	Let $A \subseteq Q^{n \times n}$. Define
	$\SSE(Q)$ as the simplicial set whose vertices are non-degenerate square matrices over
	$K$ and whose $n$-simplices are $n+1$-tuples of such matrices $(B_0,\dots,B_n)$
	together with pairs of non-degenerate matrices $R^{i,j},S^{i,j}$ over $Q$
	such that $B_i=R^{i,j}S^{i,j}$ and $B_j=S^{i,j}R^{i,j}$.
	Let $\SSE(Q,A)$ be the connected component of $\SSE(Q)$ containing $A$.	
\end{defn}

\begin{thm}\label{thm:simplicial-set-iso}
  Let $\Lambda$ be the groupoid of Markov subshifts, and let
  $E$ be the set of elementary conjugacies. Then
	the map $\NN(\Lambda,E) \to \SSE(\{0,1\})$ induced by
	\begin{align*}
	X_A &\mapsto A, \\
	\varphi &\mapsto (R_\varphi, S_\varphi)
	\end{align*} 
	is an isomorphism of simplicial sets.
   To be precise,
   under this map a simplex given by elementary conjugacies $\varphi_{i,j}:X_{B_i}
   \to X_{B_j}$
   is mapped to
   the simplex defined by the elementary
   strong shift equivalences $(R_{\varphi_{i,j}},S_{\varphi_{i,j}})$.
\end{thm}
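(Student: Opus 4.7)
The plan is to reduce the simplicial-set isomorphism to a bijection on edges compatible with the triangle structure, and then to invoke \Cref{thm:elementary-from-algebraic}, \Cref{thm:algebraic-from-elementary}, and \Cref{thm:triangle-equations} to supply the needed bijection and compatibility. The vertex map $X_A \mapsto A$ is tautologically bijective; on edges the map is $\varphi \mapsto (R_\varphi, S_\varphi)$, with candidate inverse $(R, S) \mapsto \varphi_{R,S}$ furnished by \Cref{thm:elementary-from-algebraic}. Face maps on both sides act by restriction to a subset of vertices with the induced labellings, so once the bijection on simplices is established, compatibility with faces is automatic.

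First I would check well-definedness of the edge map: by \Cref{thm:algebraic-from-elementary}, $R_\varphi$ and $S_\varphi$ are non-degenerate $\{0,1\}$-matrices satisfying $R_\varphi S_\varphi = A$ and $S_\varphi R_\varphi = B$, giving a valid edge of $\SSE(\{0,1\})$. To see that the two edge maps are mutually inverse, I would use \Cref{lem:R-characterization}: for a factorization pair $(R,S)$, applying the characterization to $\varphi_{R,S}$ yields $(R_{\varphi_{R,S}})_{a,b} = 1 \iff \exists a' : R_{a,b} S_{b,a'} = 1$, which by non-degeneracy of $S$ reduces to $R_{a,b} = 1$; an analogous argument handles $S$. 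Conversely, $\varphi_{R_\varphi, S_\varphi}$ is defined by the local rule $(R_\varphi)_{a,b}(S_\varphi)_{b,a'} = 1$ which, by the definitions of $R_\varphi, S_\varphi$ and the uniqueness of $b$ in \Cref{thm:elementary-from-algebraic}, coincides with $\varphi_\loc(a, a')$; hence $\varphi_{R_\varphi, S_\varphi} = \varphi$.

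Finally I would extend to higher simplices via \Cref{thm:triangle-equations}. An $n$-simplex of $\NN(\Lambda, E)$ consists of composable elementary conjugacies $\varphi_{i,j}$, and its image is $(B_0, \dots, B_n)$ equipped with the edge labellings $(R_{\varphi_{i,j}}, S_{\varphi_{i,j}})$. Since each sub-triangle $(i, j, k)$ commutes in $\NN(\Lambda, E)$, \Cref{thm:triangle-equations} gives the three triangle equations for the corresponding matrix pairs, so the image is a genuine $n$-simplex of $\SSE(\{0,1\})$. Conversely, an $n$-simplex of $\SSE(\{0,1\})$ yields elementary conjugacies via the edge bijection, and the other direction of \Cref{thm:triangle-equations} ensures every sub-triangle commutes, so these assemble into an $n$-simplex of $\NN(\Lambda, E)$. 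The only real obstacle is ensuring this edge-level bijection extends coherently to simplex-level---i.e., that for a simplex of $\SSE(\{0,1\})$ the conjugacies $\varphi_{R^{i,j}, S^{i,j}}$ really commute on each sub-triangle---and \Cref{thm:triangle-equations} provides exactly this fact, after which the argument is pure book-keeping.
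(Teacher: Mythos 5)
Your proposal is correct and takes essentially the same approach as the paper: establish the vertex/edge bijection via $R_{\varphi_{R,S}}=R$, $S_{\varphi_{R,S}}=S$, $\varphi_{R_\varphi,S_\varphi}=\varphi$, and invoke \Cref{thm:triangle-equations} to see that $2$-simplices (and hence all higher simplices, since they are determined by their sub-triangles) correspond. The paper states this more tersely but uses the same ingredients; your added bookkeeping on face maps and mutual inverses is a harmless elaboration.
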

\begin{proof}
  That the map maps simplices to simplices is the content of
  \Cref{thm:triangle-equations}.
  Bijectivity follows from the observation that $R_{\varphi_{R,S}}=R, S_{\varphi_{R,S}}=S$ 
  and that $\varphi_{R_\varphi,S_\varphi}=\varphi$.
\end{proof}
\begin{cor}
  \label{cor:sse-automorphismgroup}
  Let $A \in \{0,1\}^{n \times n}$ be non-degenerate.
  Then $\Aut(X_A)$ is isomorphic to $\pi_1(\SSE(\{0,1\}),A)$.
\end{cor}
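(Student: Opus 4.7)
The plan is to assemble this corollary from two earlier results: \Cref{cor:fundamenatl-S-Markov} (which identifies the automorphism group of any $S$-Markov shift with the fundamental group of the nerve) and \Cref{thm:simplicial-set-iso} (which identifies that nerve, in the case $\CG = \Z$, $S = \{0,1\}$, with the purely algebraic $\SSE$-complex). The proof is essentially a matter of specializing and chasing definitions, so I would keep it short.

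First I would specialize \Cref{cor:fundamenatl-S-Markov} to $\CG = \Z$ and $S = \{0,1\}$. As discussed at the opening of \Cref{sec:matrix-eq}, with this choice the $S$-Markov shifts are precisely the vertex shifts $X_B$ defined by non-degenerate square $\{0,1\}$-matrices, and the elementary conjugacies of \Cref{sec:markov-shifts-fg} agree with the classical elementary conjugacies of \Cref{sec:strong-shift-eq}. Consequently, letting $\Lambda$ and $E$ be as in \Cref{thm:simplicial-set-iso} and writing $\Lambda_{X_A}$ for the connected component of $X_A$ in $\Lambda$ and $E_{X_A} := E \cap \Lambda^1_{X_A}$, we obtain
\[
  \Aut(X_A) \;\cong\; \pi_1\bigl(\NN(\Lambda_{X_A},E_{X_A}),\,X_A\bigr).
\]

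Next I would observe that the simplicial-set isomorphism $F : \NN(\Lambda,E) \to \SSE(\{0,1\})$ of \Cref{thm:simplicial-set-iso} sends $X_B$ to $B$ and each edge (an elementary conjugacy between vertex shifts) to the corresponding pair $(R_\varphi,S_\varphi)$ of non-degenerate matrices. Being a simplicial-set isomorphism, $F$ carries edge-paths to edge-paths and so restricts to an isomorphism between the connected component of $X_A$ in $\NN(\Lambda,E)$, which is $\NN(\Lambda_{X_A},E_{X_A})$, and the connected component of $A$ in $\SSE(\{0,1\})$, which by \Cref{def:SSE} is exactly $\SSE(\{0,1\},A)$. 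Geometric realization is functorial on simplicial sets and preserves isomorphisms, so $F$ induces a homeomorphism of the corresponding CW-complexes and hence an isomorphism of fundamental groups
\[
  \pi_1\bigl(\NN(\Lambda_{X_A},E_{X_A}),\,X_A\bigr) \;\cong\; \pi_1\bigl(\SSE(\{0,1\},A),\,A\bigr) = \pi_1\bigl(\SSE(\{0,1\}),\,A\bigr),
\]
where the last equality uses that the fundamental group only sees the based connected component. Composing with the isomorphism from the first step gives the claim.

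There is no real obstacle here; the only point requiring a little care is the identification of the two notions of \emph{elementary conjugacy} (the matrix-theoretic one of \Cref{sec:strong-shift-eq} and the sliding-block one of \Cref{sec:markov-shifts-fg}) and the verification that $F$ restricts to connected components, both of which are immediate from the definitions. Thus the whole proof reduces to citing \Cref{cor:fundamenatl-S-Markov} and \Cref{thm:simplicial-set-iso} and noting that an isomorphism of simplicial sets induces an isomorphism of fundamental groups of the realizations.
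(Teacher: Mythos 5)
Your proof is correct and follows exactly the same route as the paper, which likewise derives \Cref{cor:sse-automorphismgroup} directly from \Cref{cor:fundamenatl-S-Markov} and \Cref{thm:simplicial-set-iso}. You have simply spelled out the specialization to $\CG=\Z$, $S=\{0,1\}$ and the passage to connected components, which the paper leaves implicit.
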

\begin{proof}
  This is a direct consequence of \Cref{cor:fundamenatl-S-Markov} and \Cref{thm:simplicial-set-iso}.
\end{proof}

\section{$G$-SFTs}
\label{sec:gsfts}
In this section we deal with subshifts of finite type on which a
finite group $G$ acts freely by automorphisms, a so called \emph{$G$-SFT} (see
for example \cite{boyleFlowEquivalenceGSFTs2015a}
or \cite{boyleFiniteGroupExtensions2017}).
Such systems can be described by square matrices over
the subset $R^\star \subseteq \Z G$ 
of the integer group ring of all elements whose
coefficients are in $\{0,1\}$.
Our goal is again to show that the group
of automorphisms of the system described by such a matrix $A$
is equal to the fundamental group of $\SSE(R^\star)$
based at $A$.

In order to assign a uniquely defined matrix over $R^\star$ to a $G$-SFT
first pick a fixed total order on $G$.
Let $D$ be a finite directed graph without sinks or sources and
without parallel edges on which $G$ acts freely.
Mark one vertex in every orbit
and choose a total order on the set $M$ of these marked vertices.
We call the resulting structure a \emph{marked $G$-graph}.
The vertex set of every marked $G$-graph can be relabeled by $\{1,\dots, |M|\} \times G$
as follows. 
The order on $M$ defines a unique enumeration $r: M \to
\{1,\dots,|M|\}$.
Since the action of $G$ is free, every vertex is of the form $gm$ for
a unique $g \in G$, $m \in M$. Label this vertex by $(r(m),g)$.

We therefore can always assume that the
vertex set of a marked $G$-graph is of the form $\{1,\dots,n\} \times G$
and that $G$ acts via $g(k,h)=(k,gh)$.

Now we describe how to get such a marked $G$-graph from
a matrix $A \in (G^\star)^{n \times n}$.
First we associate to $A$ a matrix $\overline{A}$
indexed by $(\{1,\dots,n\} \times G)\times (\{1,\dots,n\} \times G)$
with entries in $\{0,1\}$ as follows.
The entry $\overline{A}_{(k,g),(\ell,h)}$ equals $1$
if and only if the coefficient of  $g^{-1}h$ in $A_{k,\ell}$
is non-zero.

Let $e$ be the identity element of $G$.
Let $E$ be a matrix indexed by $(\{1,\dots,m\} \times G) \times
(\{1,\dots,n\} \times G)$ with entries in $\{0,1\}$
such that $E_{(k,e),(\ell,h)}=E_{(k,g),(\ell,gh)}$
for every $g,h \in G$.
We can define a matrix $\hat{E}$ over $G^\star$
of size $m \times n$
via 
\begin{align*}
  \hat{E}_{k,\ell} = \sum_{h \in G} E_{(k,e),(\ell,h)} h. 
\end{align*}
The operations $A \mapsto \overline{A}$
and $E \mapsto \hat{E}$ are inverse to each other.
The following calculation shows that these operations are
multiplicative.
Let $A \in (G^\star)^{m \times n}$, 
$B \in (G^{\star})^{n \times k}$ be matrices over $G^\star$.
Then 
\begin{align*}
  (AB)_{x_1,x_2}
  &= \sum_{y_1} A_{x_1,y_1} B_{y_1,x_2} \\
  &= \sum_{y_1} \sum_{g}\sum_{h}
    \overline{A}_{(x_1,e),(y_1,g)} \overline{B}_{(y_1,e),(x_2,h)} gh\\
  &= \sum_{y_1}\sum_{g}\sum_{h}
    \overline{A}_{(x_1,e),(y_1,g)} \overline{B}_{(y_1,g),(x_2,gh)} gh\\
  &= \sum_{y_1}\sum_{g}\sum_{h}
    \overline{A}_{(x_1,e),(y_1,g)} \overline{B}_{(y_1,g),(x_2,h)} h\\
  &= \sum_{h}
    (\overline{A}\cdot\overline{B})_{(x_1,e),(x_2,h)} h \\
  &=\widehat{\overline{A}\cdot\overline{B}}_{x_1,x_2}
\end{align*}
and hence $\overline{AB}=\overline{A}\cdot \overline{B}$
and $\widehat{AB}=\widehat{A}\widehat{B}$.

For a square matrix $A$ over $G^\star$ let $D_A$ be the graph with
adjacency matrix $\hat{A}$, i.e.  its vertex set is
$\{1,\dots,n\} \times G$ and there is an edge from $(k_1,g_1)$ to
$(k_2,g_2)$ if $g_2$ has non-zero coefficient in $g_1A_{k_1,k_2}$.
Then $G$ acts freely on this graph from the left via $h(k,g)=(k,hg)$.
Denote by $X_A$ the vertex shift of $D_A$.

\begin{exam}
  Consider the group $G:=\Z/3\Z$ written
  multiplicatively with generator $a$ .
  $G$ acts on the following graph freely as described above.
  \begin{center}
    \begin{tikzpicture}
      \node (1e) at ( 90:3.0) {$\mathbf{(1,e)}$};
      \node (1a) at (210:3.0) {$(1,a)$};
      \node (1b) at (330:3.0) {$(1,a^2)$};
      \node (2e) at (330:1.0) {$\mathbf{(2,e)}$};
      \node (2a) at ( 90:1.0) {$(2,a)$};
      \node (2b) at (210:1.0) {$(2,a^2)$};
      \begin{scope}[-latex]
        \draw (1e)--(1a);
        \draw (1a)--(1b);
        \draw (1b)--(1e);
        \draw (2a)--(2e);
        \draw (2b)--(2a);
        \draw (2e)--(2b);
      \end{scope}
      \begin{scope}[latex-latex]
        \draw (1e)--(2a);
        \draw (1a)--(2b);
        \draw (1b)--(2e);
      \end{scope}
  \begin{scope}[-latex]
        \draw (1e) to [out= 60, in=120, looseness=5] (1e);
        \draw (1a) to [out=160, in=200, looseness=5] (1a);
        \draw (1b) to [out=340, in=380, looseness=5] (1b);
      \end{scope}
    \end{tikzpicture}
  \end{center}
  This system is described by the matrix
  \begin{align*}
    A&=\begin{pmatrix}
      e+a & a \\
      a^2 & a^2
    \end{pmatrix}.
  \end{align*}
  Ordering $G$ as $(e,a,a^2)$ we obtain
  the following matrix as $\overline{A}$ which indeed is the
  adjacency matrix of the graph we started with.
  \begin{align*}
    \overline{A} &=\begin{pmatrix}
             1 & 1 & 0 & 0 & 1 & 0 \\
             0 & 1 & 1 & 0 & 0 & 1 \\
             1 & 0 & 1 & 1 & 0 & 0 \\
             0 & 0 & 1 & 0 & 1 & 0 \\
             1 & 0 & 0 & 0 & 0 & 1 \\
             0 & 1 & 0 & 1 & 0 & 0
    \end{pmatrix}
  \end{align*}
\end{exam}

\begin{defn}
Let $\Lambda$ be the groupoid whose objects are non-degenerate
square matrices over $G^\star$ and
whose morphisms from $A$ to $B$ are
conjugacies between $X_A$ and $X_B$ commuting with the
action of $G$.
\end{defn}
With these notations in place, we can state our main theorem in this setting. 
\begin{thm}
  The fundamental groupoid $\pi_1(\SSE(G^\star))$ is isomorphic
  to $\Lambda$ via
  \begin{align*}
    A &\mapsto A \\
    ((R_1,S_1)^{\eps_1}, \dots ,(R_n,S_n)^{\eps_n}) &\mapsto
    \varphi_{R_n,S_n}^{\eps_n} \circ \dots \circ \varphi_{R_1,S_1}^{\eps_1}
  \end{align*}
\end{thm}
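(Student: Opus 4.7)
The plan is to deduce this exactly as in the classical case ($\CG=\Z$, \Cref{sec:matrix-eq}) by reducing to the abstract framework of \Cref{sec:hug}. The key enabling fact is the multiplicativity $\overline{AB}=\overline{A}\cdot\overline{B}$ established above, which lets us translate between pairs of matrices over $G^\star$ and $G$-equivariant pairs of $\{0,1\}$-matrices on the vertex level.

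First I would define, for every non-degenerate pair $R \in (G^\star)^{m\times n}$, $S \in (G^\star)^{n\times m}$ with $A=RS$ and $B=SR$, a $G$-equivariant elementary conjugacy $\varphi_{R,S}: X_A \to X_B$. The lifted pair $(\overline{R},\overline{S})$ consists of $G$-equivariant $\{0,1\}$-matrices satisfying $\overline{A}=\overline{R}\cdot\overline{S}$ and $\overline{B}=\overline{S}\cdot\overline{R}$, so applying \Cref{thm:elementary-from-algebraic} to the lift gives a conjugacy of the underlying vertex shifts and $G$-equivariance is automatic from that of $\overline{R},\overline{S}$. Let $E \subseteq \Lambda^1$ denote the resulting set of $G$-equivariant elementary conjugacies. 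The $G$-equivariant analogues of \Cref{thm:algebraic-from-elementary} and \Cref{thm:triangle-equations} then carry over verbatim, since their proofs proceed pointwise and the $G$-action plays no obstructive role; this yields a simplicial isomorphism $\SSE(G^\star) \cong \NN(\Lambda, E)$ precisely as in \Cref{thm:simplicial-set-iso}.

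Next I would show that $E$ homotopically uniquely generates $\Lambda$ by constructing a refinement structure mirroring \Cref{lem:refinment-ongroups}. For composable tuples $\varphi_1,\dots,\varphi_n \in E$ out of a common G-SFT for which $\im(\varphi_1\star\dots\star\varphi_n)$ is again a G-SFT, set $\delta_n(\varphi_1,\dots,\varphi_n) := r\circ(\varphi_1\star\dots\star\varphi_n)$, with $r$ a $G$-equivariant relabeling: the resulting alphabet is a disjoint union of full $G$-orbits, so one picks a representative per orbit, orders representatives lexicographically, and enumerates each orbit using the fixed total order on $G$. All refinement axioms \eqref{eq:conditions-equivalent}--\eqref{eq:conditions-arrow-delta} then transfer from \Cref{lem:refinment-ongroups} verbatim. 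Generation of $\Lambda$ by $E \cup E^{-1}$ follows by the neighborhood-shrinking arguments of \Cref{lem:higherblock} and \Cref{lem:generate-mainstep}, applied with the underlying group $\CG=\Z$ and generator $\{0,1\}$; the auxiliary maps $\tau_{g,Y}$ commute with the $G$-action, so every intermediate conjugacy stays in $\Lambda$.

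Combining these ingredients, \Cref{thm:simply-connected} gives that $\PP(\Lambda,E,A)$ is simply connected, \Cref{thm:covering} then yields $\omega:\pi_1(\NN(\Lambda,E)) \xrightarrow{\cong} \Lambda$, and transporting along $\SSE(G^\star)\cong\NN(\Lambda,E)$ gives the theorem. The main obstacle I anticipate is checking that the canonical relabeling $r$ is genuinely $G$-equivariant in all auxiliary constructions — i.e.\ that $\im(\varphi_1\star\dots\star\varphi_n)$ always decomposes cleanly into full $G$-orbits so that $\delta_n$ lands in $\Lambda$ rather than the larger groupoid ignoring $G$. This is exactly where the freeness of the $G$-action on $D_A$ is essential, and is the place where any subtlety peculiar to the $G$-SFT setting (as opposed to the purely symbolic classical case) can appear.
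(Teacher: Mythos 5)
Your proposal is correct and follows essentially the same route as the paper: lift pairs $(R,S)$ over $G^\star$ to $G$-equivariant $\{0,1\}$-pairs via $\overline{\,\cdot\,}$ to get a $G$-equivariant elementary conjugacy, build a $G$-equivariant refinement structure by choosing orbit representatives in $\im(\varphi_1\star\dots\star\varphi_n)$ so that $\delta_n$ stays inside $\Lambda$, invoke \Cref{thm:simply-connected}/\Cref{thm:covering}, and translate the triangle equations back and forth using the multiplicativity of $\overline{\,\cdot\,}$ and $\hat{\cdot}$. Two small points of comparison: you make the generation of $\Lambda$ by $E\cup E^{-1}$ explicit (via the $G$-equivariance of $\tau_{g,Y}$ in \Cref{lem:higherblock} and \Cref{lem:generate-mainstep}), which the paper leaves implicit, while the paper is a bit more careful than your ``verbatim'' claim about the permutation axioms \eqref{eq:conditions-permutation-H}--\eqref{eq:conditions-permutation-delta}: permuting factors changes which coordinate gets $G$-part $e$ in the marked vertex, so $\delta(\varphi_{\kappa(1)},\dots,\varphi_{\kappa(n)})\circ\delta(\varphi_1,\dots,\varphi_n)^{-1}$ is only an alphabet bijection up to a group-element shift $((y_1,e),\dots)\mapsto((y_{\kappa(1)},e),\dots,(y_{\kappa(n)},g_{\kappa(1)}^{-1}g_{\kappa(n)}))$, which the paper records explicitly.
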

\begin{proof}
Let
the set of generators $E$ be the elementary conjugacies
commuting with the $G$ action.

As before, $\varphi_1: X_A \to X_B$
and $\varphi_2: X_A \to X_C$
are equivalent if $\varphi_2 \circ \varphi_1^{-1}$
is an alphabet bijection. This equivalence relation fulfills
\eqref{eq:conditions-equivalent} as was shown in \Cref{sec:markov-shifts-fg}.

The sets $H_n$ are defined as before.
To define $\delta$ let $\varphi_i: X_A \to X_{B_i}$ be elementary
conjugacies.  Consider $\varphi_1 \star \dots \star \varphi_n$ mapping
$x \in X_A$ to
$(\varphi_1(x),\dots,\varphi_n(x)) \in X_{B_1} \times \dots \times
X_{B_n}$.  If
$\im(\varphi_1 \star \dots \star \varphi_n)$ is a Markov shift, it is
the vertex shift of the graph with vertex set
$\{\varphi_1(x)_0,\dots,\varphi_n(x)_0 \setsep x \in X_A\}$ and edges
from $(\varphi_1(x)_0,\dots,\varphi_n(x)_0)$ to
$(\varphi_1(x)_1,\dots,\varphi_n(x)_1)$ for all
$x \in X_A$.
The group $G$ acts naturally on
this graph via
\begin{align*}
  g(\varphi_1(x)_0,\dots,\varphi_n(x)_0)&=(\varphi_1(gx)_0,\dots,\varphi_n(gx)_0) \\
                                        &=(g\varphi_1(x)_0,\dots,g\varphi_n(x)_0)
\end{align*}
and this action is free since the action of $G$
on the graph defining $X_{B_1}$ is free.

The only thing left to define $\delta(\varphi_1,\dots,\varphi_n)$
is to choose the set $M$ of distinguished vertices in this graph.
Every orbit contains exactly one vertex of the form
$(y_1,e),(y_2,g_2),\dots,(y_n,g_n)$.
Let $M$ be the set of these vertices.
This uniquely determines a matrix $C$ of size $|M|\times |M|$ over
$G^{\star}$. Let $\delta(\varphi_1,\dots,\varphi_n)$
be the conjugacy defined by $\varphi_1\star\dots\star\varphi_n$
from $X_A \to X_C$.
As before \eqref{eq:conditions-triv-H} and
\eqref{eq:conditions-triv-delta} are
immediately fulfilled.
For 
\eqref{eq:conditions-grouping-H}
notice that
\[\im(\delta_k(
  \delta_n(\varphi_1^1,\dots,\varphi_n^1),\dots,
  \delta_n(\varphi_1^k,\dots,\varphi_n^k)))\]
and
\[\im(\delta_{kn}(\varphi_1^1,\dots,\varphi_n^1,\dots,\varphi_1^k,\dots,\varphi_n^k))\]
are again the same subshift, hence one of them is $S$-Markov if and
only if the other is. We also
marked the same set of symbols for both of these
subshifts so the maps
\[\delta_k(
  \delta_n(\varphi_1^1,\dots,\varphi_n^1),\dots,
  \delta_n(\varphi_1^k,\dots,\varphi_n^k))\]
and
\[\delta_k(
  \delta_n(\varphi_1^1,\dots,\varphi_n^1),\dots,
  \delta_n(\varphi_1^k,\dots,\varphi_n^k))\]
  are in fact equal, showing \eqref{eq:conditions-grouping-delta}.
The same is true for
$\delta_{n+1}(\varphi_1,\varphi_1,\dots,\varphi_n)$
and $\delta_n(\varphi_1,\dots,\varphi_n)$.
proving \eqref{eq:conditions-drop-H}
\eqref{eq:conditions-drop-delta}

For \eqref{eq:conditions-permutation-H} and
\eqref{eq:conditions-permutation-delta}
notice that
$\delta(\varphi_{\kappa(1)},\dots,\varphi_{\kappa(n)})
\delta(\varphi_{1},\dots,\varphi_{n})^{-1}$
is given by the alphabet permutation
mapping $((y_1,e),\dots,(y_n,g_n))$
to \[((y_{\kappa(1)},e),\dots,(y_{\kappa(n)},g_{\kappa(1)}^{-1}g_{\kappa(n)})).\]

Properties
\eqref{eq:conditions-arrow-2-H},
\eqref{eq:conditions-arrow-3-H}
and
\eqref{eq:conditions-arrow-delta}
have already been shown to hold in \Cref{sec:markov-shifts-fg}.
All in all this shows that we have a refinement structure
and we can apply \Cref{thm:simply-connected}.

Each elementary conjugacy between the vertex shifts defined by
$A\in {G^{\star}}^{m\times m}$ and $B \in {G^{\star}}^{n \times n}$
can be described by a pair of matrices $\tilde{R}, \tilde{S}$
over $\{0,1\}$ such that $\tilde{R}\tilde{S}=\overline{A}$ and
$\tilde{S}\tilde{R}=\overline{B}$.  Since our elementary conjugacies commute with the
$G$-action, for every $g \in G$ we have
\begin{align*}
  S_{(y_1,e),(x_2,g)}=S_{(y_1,h),(x_2,hg)} \text{ and} \\
  R_{(x_1,e),(y_1,g)}=S_{(x_1,h),(y_1,hg)}
\end{align*}

The matrices $\overline{R}$ and $\overline{S}$ give rise to
matrices $R=\hat{\tilde{R}}$ and $S=\hat{\tilde{S}}$.
We have $A=\hat{\overline{A}}=\widehat{\tilde{R}\tilde{S}}=RS$.
and similarly $B=SR$.
Conversely every such pair of matrices $R,S$ produces
an elementary conjugacy commuting with the $G$ action
by first passing to $\overline{R}, \overline{S}$.

The multiplicativity of $A\mapsto \overline{A}$ and $E \mapsto
\hat{E}$ also show that
the triangle equations hold for the $R,S$ matrices
if and only if they hold for the corresponding $\overline{R},\overline{S}$
matrices. Hence we can finish the proof by applying
to \Cref{thm:triangle-equations} as in the proof
of \Cref{thm:simplicial-set-iso}.
\end{proof}

\section{Degenerate Matrices}
\label{sec:degenerate}

The one-to-one correspondence between square $\{0,1\}$-matrices and topological
Markov shifts only works with non-degenerate matrices
or equivalently directed graphs without sinks and sources
as all adjacency information of sinks and sources is lost
when we pass to two-sided infinite paths.
Nevertheless one can define for a subset $Q$
of a semiring the space $\SSE_{\text{deg}}(Q)$
using \Cref{def:SSE} but
allowing matrices with zero rows or columns.

This gives additional algebraic flexibility 
that Wagoner and Boyle needed in
\cite{zbMATH05347058} to establish
a link between strong shift equivalence
and positive row and column operations
of polynomial matrix representations, thus bring methods of K-theory to the table. 
For $Q \subseteq \Z_{\geq 0}$ they showed \cite[Theorem A.7]{zbMATH05347058} that
every pair of non-degenerate matrices
that is connected by a path in $\SSE_{\text{deg}}(Q)$
is also connected by a path in $\SSE(Q)$.
They also showed \cite[Proposition A.11]{zbMATH05347058} that every degenerate matrix
is connected to a non-degenerate matrix in $\SSE_{\text{deg}}(Q)$.
Expressing this in terms of path components this means
that the inclusion
$\SSE(Q) \to \SSE_{\text{deg}}(Q)$
induces an isomorphism
\begin{align*}
  \pi_0(\SSE(Q)) \to \pi_0(\SSE_{\text{deg}}(Q)).
\end{align*}
Boyle and Wagoner asked \cite[Question 1]{zbMATH05347058} if the same holds for the fundamental groups, i.e.,
is there also an induced isomorphism
\begin{align*}
  \pi_1(\SSE(Q),A) \to \pi_1(\SSE_{\text{deg}}(Q),A)
\end{align*}
for non-degenerate $A$?

The following theorem shows that this indeed the case.
\begin{thm}\label{thm:iso-deg-fundamental}
  For $Q \subseteq \Z_{\geq 0}$ the inclusion
  $\SSE(Q) \to \SSE_{\text{deg}}(Q)$ induces an isomorphism
  of fundamental groups
  \begin{align*}
    \pi_1(\SSE(Q),A) \to \pi_1(\SSE_{\text{deg}}(Q),A)
  \end{align*}
  for every non degenerate square matrix $A$ over $Q$.
\end{thm}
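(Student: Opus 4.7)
The plan is to establish surjectivity and injectivity of $i_\ast : \pi_1(\SSE(Q), A) \to \pi_1(\SSE_{\text{deg}}(Q), A)$ separately, each reduced to a single local replacement lemma that kills degenerate vertices in paths and triangles.

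For surjectivity, I would represent an arbitrary loop $\gamma$ in $\SSE_{\text{deg}}(Q)$ based at $A$ as an edge path $A = B_0 - B_1 - \cdots - B_n = A$ and push it off degenerate vertices one at a time. By Boyle and Wagoner's Proposition A.11, each degenerate matrix $B$ is connected via a path in $\SSE_{\text{deg}}(Q)$ to a non-degenerate matrix. The local step I would need is a \emph{Replacement Lemma}: given two consecutive edges $B_{i-1}-B_i-B_{i+1}$ with $B_i$ degenerate and $B_{i\pm 1}$ non-degenerate, there exists a path in $\SSE(Q)$ from $B_{i-1}$ to $B_{i+1}$ whose concatenation with the reverse of the original sub-path is null-homotopic in $\SSE_{\text{deg}}(Q)$. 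Strings of consecutive degenerate vertices can be handled by first using A.11 to bracket them by non-degenerate vertices, then iterating. After finitely many applications, every degenerate vertex in $\gamma$ is eliminated, so $\gamma$ is homotopic to a loop in $\SSE(Q)$.

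For injectivity, suppose two loops $\gamma_1, \gamma_2$ in $\SSE(Q)$ are homotopic in $\SSE_{\text{deg}}(Q)$ via a combinatorial homotopy given by a sequence of $2$-simplex moves. Each $2$-simplex whose boundary involves a degenerate vertex can be replaced by a diagram of $2$-simplices in $\SSE(Q)$: its boundary loop is triangulable through non-degenerate matrices by the Replacement Lemma, and the fillings supply the required homotopy in $\SSE(Q)$. The full homotopy in $\SSE_{\text{deg}}(Q)$ is thereby rewritten as a homotopy in $\SSE(Q)$, proving injectivity.

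The main obstacle is the Replacement Lemma, since it requires not merely a path through non-degenerate matrices (which A.11 essentially provides) but also filled-in $2$-simplices witnessing that the resulting loop is contractible. Concretely, given elementary SSEs $(R_1, S_1) : B_{i-1} \leftrightarrow B_i$ and $(R_2, S_2) : B_i \leftrightarrow B_{i+1}$ with $B_i$ degenerate, I would decompose $R_j, S_j$ along the zero rows/columns of $B_i$ to produce smaller matrices and a non-degenerate intermediate $B_i'$, and then construct explicit SSE pairs satisfying the triangle identities of \Cref{thm:triangle-equations}. The availability of arbitrarily large $Q \subseteq \Z_{\geq 0}$ entries, together with the block/padding constructions of Boyle and Wagoner's appendix, should provide the flexibility needed. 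A secondary technical issue is to handle edges of $\SSE_{\text{deg}}(Q)$ whose $R, S$ are degenerate even when the endpoints are not; these are incorporated into the lemma by regarding such an edge as a degenerate $2$-simplex whose middle vertex is the product $RS$ or $SR$ after a suitable expansion.
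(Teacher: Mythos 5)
Your overall strategy --- eliminate degenerate vertices from loops for surjectivity, then deal with homotopies for injectivity --- matches the paper's, and your surjectivity sketch is close to the actual argument. The paper's Lemma~\ref{lem:deg-triangulate}, however, works on a \emph{single} elementary SSE $(R,S)\colon A\to B$ rather than on two consecutive edges with a degenerate middle: it produces an explicit four-triangle filling in $\SSE_{\text{deg}}(Q)$ connecting $A$ to $A_{K\times K}$ and $B$ to $B_{L\times L}$, where $K,L$ are the nonzero-row index sets, using only submatrix extraction and a diagonal idempotent $E_S$. Iterating (and transposing to handle columns) eventually produces a homotopic path whose matrices all lie in $\SSE(Q)$; no appeal to Boyle--Wagoner's Proposition~A.11 is needed, and the triangle equations for the filling are checked directly. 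Your version needs A.11 to bracket strings of degenerate vertices and then to verify that whatever $(R,S)$ data the ``decompose along zero rows/columns'' step produces really do satisfy the triangle identities --- that verification is where essentially all the work of Lemma~\ref{lem:deg-triangulate} lives, and you have not carried it out.

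The injectivity half has a genuine gap, and the paper's key device there is different from what you propose. You want to replace each 2-simplex of a combinatorial homotopy touching a degenerate vertex by a diagram in $\SSE(Q)$. But adjacent 2-simplices of a combinatorial homotopy share edges, so when a shared edge has a degenerate endpoint the local replacements must agree along it; a purely local ``triangulate the boundary loop and fill'' step does not ensure this, and you say nothing about how to make the choices coherent. The paper's Lemma~\ref{lem:lift-nondeg} sidesteps this entirely: restrict every matrix $A$ to the index set $J_A$ of indices occurring in bi-infinite paths, and every $(R,S)$ to $(R_{J_A\times J_B},S_{J_B\times J_A})$; the lemma shows this carries 2-simplices satisfying the triangle equations to 2-simplices satisfying the triangle equations. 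Since the restriction is the identity on non-degenerate matrices, this is a simplicial retraction of (the relevant component of) the 2-skeleton of $\SSE_{\text{deg}}(Q)$ onto $\SSE(Q)$, so the entire homotopy projects coherently in one stroke. To repair your local argument you would have to fix a canonical choice in the Replacement Lemma (e.g.\ always restricting to $J_A$) and prove these canonical replacements glue --- at which point you would have reconstructed the retraction of Lemma~\ref{lem:lift-nondeg}.
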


The main ingredient of the proof is the following lemma for which
we have to introduce some notation.
For a matrix $C \in \Z^{n \times m}$ and non-empty index sets $K \subseteq \{1,\dots,n\}=:N$
and $L \subseteq  \{1,\dots,m\}=:M$ let $C_{K\times L}$ be the
matrix obtained from $C$ by removing all rows not in $K$ and all
columns not in $L$.
Let $I_{K \times L}$ be the corresponding submatrix
of a sufficiently large identity matrix.
For a square matrix $A \in \Z_{\geq 0}^{m \times m}$
denote by $J_A$ the set of all indices $i$ such that
$A^k_{i,\cdot} \neq (0,\dots,0)$ and $A^k_{\cdot,i} \neq (0,\dots,0)$
for all $k$. These are precisely the indices that can appear
in two-sided infinite paths of the graph with adjacency matrix $A$ .

\begin{lem}\label{lem:deg-triangulate}
Consider the  matrices $A \in \Z_{\geq 0}^{n \times n}$,
$B \in \Z_{\geq 0}^{m \times m}$, $R \in \Z_{\geq 0}^{n \times m}$,
and $S \in \Z_{\geq 0}^{m \times n}$ with $A=RS, B=SR$.
Let $K \subseteq \{1,\dots,n\}$ be the set of all indices $k$ for
which $A_{k,\cdot}\neq (0,\dots,0)$ and let
$L \subseteq \{1,\dots,m\}$ be the set of all indices $\ell$ for which
$B_{\ell,\cdot}\neq (0,\dots,0)$. 
There are matrices $R',S'$ such that the diagram
\begin{center}	
	\begin{tikzcd}[column sep=2.2cm]
		A_{K\times K} 
		\arrow[r,"{R',S'}"]
		&B_{L\times L}
		\\
		A 
		\arrow[u,"{I_{N \times K},A_{K \times N}}"]
		\arrow[r,"{R,S}" below]
		&B
		\arrow[u, "{I_{M \times L}, B_{L \times M}}" right]
      
	\end{tikzcd}
\end{center}
can be triangulated by four triangles in $\SSE_\text{deg}(Q)$.
\end{lem}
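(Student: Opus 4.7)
The plan is to first verify the top edge of the square and then construct the four-triangle filling by adding an intermediate vertex.

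For the top edge, I define $R' := R_{K \times L}$ and $S' := S_{L \times K}$ and directly check that $R' S' = A_{K \times K}$ and $S' R' = B_{L \times L}$. The essential input is the positivity $Q \subseteq \Z_{\geq 0}$. For any $\ell \notin L$ one has $B_{\ell,\cdot} = 0$, and since $B = SR$, in $\Z_{\geq 0}$ all summands in $(SR)_{\ell,j} = \sum_i S_{\ell,i} R_{i,j}$ must vanish individually. This forces $S_{\ell,i} = 0$ whenever $R_{i,\cdot} \neq 0$. Combined with $k \in K \Rightarrow R_{k,\cdot} \neq 0$ (because $A_{k,\cdot} = R_{k,\cdot} S \neq 0$), we obtain $S_{\ell,k} = 0$ for $\ell \notin L$ and $k \in K$. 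Consequently the difference $(RS)_{K \times K} - R_{K \times L} S_{L \times K} = \sum_{\ell \notin L} R_{k,\ell} S_{\ell,k'}$ vanishes term by term and $R' S' = A_{K \times K}$. The mirror argument gives $R_{k,\ell} = 0$ for $k \notin K$, $\ell \in L$, and hence $S' R' = B_{L \times L}$.

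For the four-triangle decomposition I introduce an interior vertex $D$ and four ESSE edges from each corner to $D$, producing the four triangles $(A, A_{K \times K}, D)$, $(A_{K \times K}, B_{L \times L}, D)$, $(B, B_{L \times L}, D)$ and $(A, B, D)$. A natural candidate is the block matrix $D = \left(\begin{smallmatrix} 0 & R \\ 0 & B \end{smallmatrix}\right)$ of size $(n+m) \times (n+m)$, with the ESSE from $A$ to $D$ given by $R_A = (0_{n \times n} \mid R)$ and $S_A = \left(\begin{smallmatrix} I_n \\ S \end{smallmatrix}\right)$, for which $R_A S_A = RS = A$ and $S_A R_A = D$. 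The other three ESSE edges arise from analogous block constructions using $R_{K \times L}$, $S_{L \times K}$, $I_{N \times K}$, $I_{M \times L}$, $A_{K \times N}$ and $B_{L \times M}$; each of the four triangles' three triangle equations then reduces to block identities implied by the four defining equations $A = RS$, $B = SR$, $A_{K \times K} = R' S'$, $B_{L \times L} = S' R'$, together with the vanishing identities established in the first step.

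The main obstacle is that the four ESSE pairs must form a consistent family satisfying all twelve triangle equations simultaneously over $\Z_{\geq 0}$, which is delicate when $R$ and $S$ carry excess information outside the essential indices $K$ and $L$ (for instance, nonzero rows of $R$ indexed by $N \setminus K$). The positivity-based vanishing identities from the first step are precisely the technical input that cancels the undesired block contributions and allows the block products in the triangle equations to match without introducing negative corrections.
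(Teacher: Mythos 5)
Your construction of the top edge is correct: the positivity argument showing $R_{K\times L}S_{L\times K}=A_{K\times K}$ and $S_{L\times K}R_{K\times L}=B_{L\times L}$ is exactly right and in fact coincides with the paper's $R'=(RE_S)_{K\times L}$, $S'=S_{L\times K}$ (for $\ell\in L$ the diagonal matrix $E_S$ acts as the identity, so $(RE_S)_{K\times L}=R_{K\times L}$).

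The four-triangle filling, however, has a genuine gap. Take your triangle $(A, A_{K\times K}, D)$ with $D=\left(\begin{smallmatrix}0 & R\\ 0 & B\end{smallmatrix}\right)$ and $R^{0,2}=(0\mid R)$. The edge $A\to A_{K\times K}$ is $(I_{N\times K},A_{K\times N})$, so the triangle equation $R^{0,1}R^{1,2}=R^{0,2}$ becomes $I_{N\times K}R^{1,2}=(0\mid R)$. The left side is zero on every row indexed by $N\setminus K$, so the equation forces $R_{k,\cdot}=0$ for all $k\notin K$. But this is false in general: for $k\notin K$ the equation $A_{k,\cdot}=0$ only forces $R_{k,\ell}=0$ when $S$ has a nonzero row $\ell$, and $R_{k,\ell}$ can be strictly positive whenever $S_{\ell,\cdot}=0$. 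The vanishing identities you proved in the first step ($S_{\ell,k}=0$ for $\ell\notin L, k\in K$, and $R_{k,\ell}=0$ for $k\notin K, \ell\in L$) are too weak to save this — the problematic entries are $R_{k,\ell}$ with $k\notin K$ and $\ell\notin L$, which they do not touch.

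This is precisely the obstruction the paper resolves with the diagonal matrix $E_S$ (marking the nonzero rows of $S$): replacing $R$ by $RE_S$ kills exactly the offending columns, giving $(RE_S)_{k,\cdot}=0$ for every $k\notin K$, and the intermediate vertex used for the cone-off is $BE_S$ rather than a block matrix of enlarged size. Your block construction would need this same $E_S$-correction baked into $D$ and $R_A$ to have any chance of closing the triangle $(A, A_{K\times K}, D)$; as stated it does not.
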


\begin{proof}
Define a diagonal matrix $E_S \in \{0,1\}^{m \times m}$ by 
\[(E_S)_{i,i}=\begin{cases}1 &\text{if } \exists k \in \{1,\dots,n\} \text{ with } S_{i,k}=1 \\
0&\text{otherwise} \end{cases}.\]
 Denote by $J \subseteq M$ the set
of all indices $j$ for which $S_{j,\cdot} \neq (0,\dots,0)$.  Since
$B=SR$, we clearly have $M \setminus J \subseteq M \setminus L$, hence
$L \subseteq J$.

Our diagram can now be triangulated as follows.
\begin{center}	
	\begin{tikzcd}[column sep=3.25cm, row sep=huge]
		A_{K\times K} 
		\arrow[rr,"{(RE_S)_{K\times L},S_{L \times K}}"]
		\arrow[rd,"{(RE_S)_{K\times M},S_{M \times K}}" below left, pos=0.7]
		&&B_{L\times L}
		\\
		&B E_S
		\arrow[ru,"{(E_S)_{M\times L},(BE_S)_{L \times M}}" below right , pos=0.3]
		& \\
		A 
		\arrow[uu,"{I_{N \times K},A_{K \times N}}"]
		\arrow[ur,"{RE_S,S}" above left]
		\arrow[rr,"{R,S}" below]
		&&B
		\arrow[uu, "{I_{M \times L}, B_{L \times M}}" right]
		\arrow[lu, "{E_S,B}" above right]
	\end{tikzcd}
\end{center}
Before we start checking the triangle equations, we make some simple observations.
Multiplying a matrix by $E_S$ from the left 
sets all rows in $M \setminus J$ to zero and
leaves all rows in $J$ unchanged. Therefore $E_SS=S$
and $E_SB=E_SSR=SR=B$.
Two other equations are central, namely
\begin{align}
  S_{M \times K}(RE_S)_{K \times M}
  &= BE_S, \label{eq:SRE=BE} \\
  (RE_S)_{K \times L}S_{L \times K}
  &= A_{K \times K}. \label{eq:RES=A}
\end{align}

For the first equation, consider $S_{i,k}(RE_S)_{k,j}$.
If \[S_{i,k}(RE_S)_{k,j}=S_{i,k}R_{k,j}(E_S)_{j,j}>0\] then $j \in J$, hence there
must be $\ell \in N$ with $S_{j,\ell} > 0$ and thus
$A_{k,\ell} \geq R_{k,j}S_{j,\ell} > 0$. Therefore $k \in K$.
This shows $S_{M \times K}(RE_S)_{K \times M} = SRE_S=BE_S$.

For the second equation, consider $(RE_S)_{k,i}S_{i,\ell}$ with $\ell \in K$.
There is $j \in M$ and $\ell'$ with $A_{\ell,j}\geq R_{\ell,\ell'}S_{\ell',j}>0$.
Now if $(RE_S)_{k,i}S_{i,\ell} = R_{k,i} (E_S)_{i,i} S_{i,\ell} > 0$,
we have $0<S_{i,\ell}R_{\ell,\ell'} \leq B_{i,\ell'}$,
hence $i \in K$. Thus
$(RE_S)_{K \times L}S_{L \times K} = (RE_S S)_{K \times K} = A_{K
  \times K}$.

Now we have to check that all matrix pairs along the edges are
indeed elementary strong shift equivalences between the matrices at
their source and target. This amounts to checking that
\begin{align}
  A&=RS,
  &B&=SR, \\
  A&=RE_SS,
  &SRE_S &= BE_S, \\
  B&=E_SB,
  &BE_S &= BE_S, \\
  (E_S)_{M \times L}(BE_S)_{L \times M}&=BE_S, 
  &(BE_S)_{L \times M}(E_S)_{M \times L}&=B_{L \times L}, \\
  (RE_S)_{K \times M}S_{M \times K}&=A_{K \times K}, 
  &S_{M \times K}(RE_S)_{K \times M}&=BE_S, \\
  I_{N \times K}A_{K \times N} &= A, 
  &A_{K \times N}I_{N \times K} &= A_{K \times K}, \\
  I_{M \times L}B_{L \times M} &= B, 
  &B_{L \times M}I_{M \times L} &= B_{L \times L}, \\                        
  (RE_S)_{K \times L}S_{L \times K} &= A_{K \times K},
  & S_{L \times K} (RE_S)_{K \times L} &= B_{L \times L.}
\end{align}
All of these equations are either direct consequences of the
definitions of $L$ and $K$ or follow directly from \eqref{eq:SRE=BE} and
\eqref{eq:RES=A}.

Now consider the first triangle
\begin{center}	
	\begin{tikzcd}[column sep=4.2cm, row sep=huge]
		&BE_S
		& \\	
		A 
		\arrow[ur,"{RE_S,S}" above left]
		\arrow[rr,"{R,S}" below]
		&&B		
		\arrow[lu, "{E_S,B}" above right]
	\end{tikzcd}
\end{center}
Here we have to check
\begin{align}
R \cdot E_S&= RE_S,&(R_1R_2&=R_3,) \\
SR&=B,	&(S_3 R_1, &= S_2) \\
E_S S &=S.  &(R_2 S_3 &= S_1)	
\end{align}
All of these equations have already been shown.

We continue with the second triangle:
\begin{center}	
	\begin{tikzcd}[column sep=4.2cm, row sep=huge]
		A_{K\times K} 
		\arrow[rd,"{(RE_S)_{K\times M},S_{M \times K}}" below left, pos=0.7]
		&&
		\\
		&B E_S
		& \\
		A 
		\arrow[uu,"{I_{N \times K},A_{K \times N}}"]
		\arrow[ur,"{RE_S,S}" above left]
		&&
	\end{tikzcd}
\end{center}
Here we have to check that
\begin{align*}
I_{N \times K} (RE_S)_{K \times M}, &= RE_S,&(R_1R_2&=R_3) \\
SI_{N \times K}&=S_{M \times K},	&(S_3 R_1 &= S_2) \\
(RE_S)_{K \times M} S &=A_{K \times N}.  &(R_2 S_3 &= S_1)	
\end{align*}
Only the first equation is non trivial.
Let $(RE_S)_{i,k}=R_{i,k}(E_S)_{k,k}\geq 1$ for some $i \in N, k \in M$.
Hence there is $j \in N$ with $S_{k,j}\geq 1$ and therefore
$A_{i,j}\geq R_{i,k}S_{k,j}\geq 1$. This implies $i \in K$.
Hence $(RE_S)_{i,k}=0$ for all $i \in N \setminus K, k \in M$
and thus $I_{N \times K} (RE_S)_{K \times M} = RE_S$.

For the third triangle
\begin{center}	
	\begin{tikzcd}[column sep=4.2cm, row sep=huge]
		&&B_{L\times L}
		\\
		&B E_S
		\arrow[ru,"{(E_S)_{M\times L},(BE_S)_{L \times M}}" below right , pos=0.3]
		& \\
		&&B
		\arrow[uu, "{I_{M \times L}, B_{L \times M}}" right]
		\arrow[lu, "{E_S,B}" above right]
	\end{tikzcd}
\end{center}
we have to check
\begin{align*}
E_S (E_S)_{M \times L} &= I_{M \times L},&(R_1R_2&=R_3) \\
B_{L \times M}E_S&=(BE_S)_{L \times M},	&(S_3 R_1 &= S_2) \\
I_{M \times L}B_{L\times M} &=B.  &(R_2 S_3 &= S_1)	
\end{align*}
For the first equality we have to check that $(E_S)_{M \times L}=I_{M \times L}$.
This follow directly from $L \subseteq H$. and $(E_S)_{M \times H}=I_{M \times H}$.
We also have $B_{i,m}=0$ for all $i \in N \setminus L, m \in M$ and therefore 
$I_{M \times L}B_{L\times M} =B$. The second equation is again trivial.

Finally we consider the triangle on the top.
\begin{center}	
	\begin{tikzcd}[column sep=4.2cm, row sep=huge]
		A_{K\times K} 
		\arrow[rr,"{(RE_S)_{K\times L},S_{L \times K}}"]
		\arrow[rd,"{(RE_S)_{K\times M},S_{M \times K}}" below left, pos=0.7]
		&&B_{L\times L}
		\\
		&B E_S
		\arrow[ru,"{(E_S)_{M\times L},(BE_S)_{L \times M}}" below right , pos=0.3]
		& 
	\end{tikzcd}
\end{center}
\begin{align*}
(RE_S)_{K \times M} (E_S)_{M \times L} &= (RE_S)_{K \times L}&(R_1R_2&=R_3) \\
S_{L \times K}(RE_S)_{K \times M}&=(BE_S)_{L \times M}	&(S_3 R_1 &= S_2) \\
(E_S)_{M \times L}S_{L \times K} &=S_{M \times K}  &(R_2 S_3 &= S_1)	
\end{align*}
Here the first equation is trivial and the second equation follow from
$(RE_S)_{i,m}=0$ for $i \in N \setminus K, m \in M$ which we already showed.
Finally consider $i \in N, k \in K$ with $S_{i,k}=1$.
There is $n \in N$ with $A_{k,n}\geq 1$ hence 
there is $j \in M$ with $R_{k,j}\geq 1$.
Therefore $B_{i,j}\geq S_{i,k}R_{k,j}\geq 1$. Thus $i \in L$.
This means $S_{i,k}=1$ for $i \in M \setminus L, k \in K$. 
Together with $(E_S)_{M \times L}=I_{M \times L}$ this shows 
$(E_S)_{M \times L}S_{L \times K}=S_{M \times K}$.
\end{proof}

\begin{lem}\label{lem:lift-nondeg}
  Let $Q \subset \Z_{\geq 0}$
  Consider the  matrices $A \in \Z_{\geq 0}^{m \times m}$,
  $B \in \Z_{\geq 0}^{n \times n}$, $C \in \Z_{\geq 0}^{p \times p}$
  and matrices $R_1,S_1,R_2,S_2,R_3,S_3$
  such that the following triangle fulfills the
  triangle equations.
  \begin{center}
	\begin{tikzcd}[column sep=2cm, row sep=large]
     &B
      \arrow[rd, "{R_2,S_2}" above right]
      & \\	
		A
		\arrow[ur,"{R_1,S_1}" above left]
		\arrow[rr,"{R_3,S_3}" below]
		&&C	
	\end{tikzcd}
 \end{center}
  Then the following triangle also fulfills the triangle equations.
    \begin{center}
	\begin{tikzcd}[column sep=2cm, row sep=large]
		&B_{J_B \times J_B}
      \arrow[rd, "{(R_2)_{J_B \times J_C},(S_2)_{J_C \times J_B}}" above right]
      & \\	
		A_{J_A \times J_A} 
		\arrow[ur,"{(R_1)_{J_A \times J_B},(S_1)_{J_B \times J_A}}" above left]
		\arrow[rr,"{(R_3)_{J_A \times J_C},(S_3)_{J_C \times J_A}}" below]
		&&C_{J_C \times J_C}
    \end{tikzcd}
 \end{center}
\end{lem}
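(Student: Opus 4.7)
The plan is to verify both the three triangle equations $R_1R_2 = R_3$, $R_2 S_3 = S_1$, $S_3 R_1 = S_2$ and also the six edge/vertex product equations (of the form $R_k S_k = \text{source}$ and $S_k R_k = \text{target}$) for the restricted matrices. Each such equation has the shape ``sum over an intermediate index equals a single entry''. Restricting the sum to any subset of indices gives an inequality automatically, so the real content is to show that whenever the outer indices of the sum lie in the appropriate recurrent set $J_A$, $J_B$ or $J_C$, every intermediate index that contributes to a positive summand also lies in the corresponding recurrent set.

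The central tool I would isolate is a propagation lemma for an arbitrary elementary SSE $X = MN$, $Y = NM$ in $\Z_{\geq 0}$. Setting $J_X^+ := \{i : X^k_{i,\cdot} \neq 0 \text{ for all } k\}$ and $J_X^- := \{i : X^k_{\cdot,i} \neq 0 \text{ for all } k\}$, so that $J_X = J_X^+ \cap J_X^-$ (and analogously for $Y$), the lemma has four symmetric clauses: if $i \in J_X^-$ and $M_{i,j} > 0$ then $j \in J_Y^-$; if $i \in J_X^+$ and $N_{j,i} > 0$ then $j \in J_Y^+$; together with the two statements obtained by exchanging the roles of $X$ and $Y$. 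Each clause is a single bipartite-graph step: interpret $M$ and $N$ as the two kinds of edges of a bipartite graph on the indices of $X$ and $Y$, so that a path of length $k$ in the graph of $X$ is a bipartite path of length $2k$ starting and ending on the $X$-side; pre- or append the single edge witnessing $M_{i,j} > 0$ (respectively $N_{j,i} > 0$) and read off the length-$2k$ subpath that now starts and ends on the $Y$-side, which is a path of length $k$ in the graph of $Y$.

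Applying this lemma to each of the three SSEs $(R_1,S_1)$, $(R_2,S_2)$, $(R_3,S_3)$ in the triangle produces a toolkit of twelve propagation statements, and each of the nine equations then follows by the same recipe: the intermediate index sits between two matrix entries, one of which forces it into $J^-$ and the other into $J^+$, hence into $J$. For example, for $R_1 R_2 = R_3$: given $i \in J_A$, $\ell \in J_C$ and $(R_1)_{i,j}(R_2)_{j,\ell} > 0$, the piece $i \in J_A^-$ with $(R_1)_{i,j} > 0$ gives $j \in J_B^-$ by the first SSE, and the piece $\ell \in J_C^+$ with $(R_2)_{j,\ell} > 0$ gives $j \in J_B^+$ by the second SSE; hence $j \in J_B$. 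The other two triangle equations and the six edge/vertex equations follow the same pattern once the correct pair of propagation clauses is chosen.

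The main obstacle is purely organizational rather than mathematical: there are twelve propagation statements and nine equations to pair correctly, and without isolating the propagation lemma one ends up repeating essentially the same bipartite path argument nine times. No deeper input than the bipartite reinterpretation, which is already implicit in the proof of \Cref{lem:deg-triangulate}, is needed.
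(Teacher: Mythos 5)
Your proposal is correct and rests on the same underlying mechanism as the paper's proof: recurrence of an intermediate index is forced by recurrence of the outer indices, because an arbitrarily long walk in the source or target graph can be extended across the single bipartite edge witnessing a positive matrix entry. The paper phrases this with matrix powers $A^\ell R = R B^\ell$ and $S A^\ell = B^\ell S$ and does the check once for an edge pair and once for one triangle equation, declaring the rest ``very similar''; you instead isolate a standalone propagation lemma with the $J = J^+ \cap J^-$ decomposition and observe that all nine verifications (three edge pairs giving six product equations, plus the three triangle equations) are instances of it. That is a mild but genuine organizational improvement: it makes the bookkeeping fully systematic, cleanly separates the one nontrivial combinatorial step from the routine ``restricting a sum can only lose positive terms'' observation, and avoids the reader having to re-derive which power identity to use for each of the remaining cases. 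Mathematically, though, the two arguments are identical, and your four propagation clauses are exactly what the paper's manipulations with $A^\ell$, $B^\ell$, $C^\ell$ establish in context.

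Two small cautions worth flagging. First, for the claim that each of the nine equations reduces to a ``$J^-$ from one side, $J^+$ from the other'' pairing, you do need to verify carefully which of your four clauses applies in each case; in particular the triangle equation $R_1 R_2 = R_3$ uses clause~(1) with the SSE $(R_1,S_1)$ for the $J_B^-$ half but the exchanged clause~(4) with the SSE $(R_2,S_2)$ for the $J_B^+$ half, and getting one of these pairings wrong would be an easy slip. Second, note that the paper's statement and proof use the inclusion of the restricted product in the full product plus equality of the full matrices, so one really only needs that every positive summand has an intermediate index in $J$; you state this, but it is worth being explicit that no reverse inequality needs a separate argument.
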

\begin{proof}
  First we check that the edges in the second triangle indeed describe
  an elementary strong shift equivalence.
  To shorten notation write $R=R_1$ and $S=S_1$.
  Assume $R_{x_0y_0}S_{y_0x_1}>0$ for some $x_0, x_1 \in J_A$ and
  $y_0 \in \{1,\dots,n\}$. We can extend the word $x_0x_1$ to a two-sided
  infinite sequence $(x_k)_{k \in \Z}$ with $A_{x_k,x_{k+1}}>0$
  for all $k \in \Z$.
  For arbitrary $\ell \in \N$ we have
  \begin{align*}
   0&< A^\ell_{x_{-\ell},x_0} R_{x_0,y_0} S_{y_0,x_1}
      A^\ell_{x_1,x_{\ell+1}}
  \end{align*}
  and hence there are $y_{-\ell}, y_\ell \in \{1,\dots,m\}$ with
  \begin{align*}
  0&< R_{x_{-\ell},y_{-\ell}} B_{y_{-\ell},y_0}^\ell B_{y_0,y_\ell}^\ell
     S_{y_\ell,x_{\ell+1}}.
  \end{align*}
  Therefore $y_0 \in I_B$ and $A_{I_A \times I_A} = (R_1)_{I_A \times
    I_B}(S_1)_{I_B \times I_A}$.
  In the same way we see $B_{I_B \times I_B} = (S_1)_{I_B \times I_A}
  (R_1)_{I_A \times I_B}$.

  The proof for the triangle equations is very similar.
  We only show
  \[(R_1)_{I_A \times I_B}
  (R_2)_{I_B \times I_C} =
  (R_3)_{I_A \times I_C}.\]
  Let $x_0, y_0, z_0$ be indices such that
  $(R_1)_{x_0,y_0}(R_2)_{y_0,z_0}=(R_3)_{x_0,z_0}>0$.
  Extend $x_0$ and $z_0$ to bi-infinite sequenzes $x$ and $z$
  with $A_{x_k,x_{k+1}}>0$ and $C_{z_k,z_{k+1}}>0$ for all $k \in \Z$.
  For arbitrary $\ell \in \N$ we have
  \begin{align*}
    0<&A^\ell_{x_{-\ell},x_0} (R_1)_{x_0,y_0}, \\
    0<&(R_2)_{y_0,z_0}C^\ell_{z_0,z_{\ell}}.                      
  \end{align*}
  Hence there are $y_{-\ell},y_{\ell}$ such that
  \begin{align*}             
    0<& (R_1)_{x_{-\ell},y_{-\ell}} B^\ell_{y_{-\ell},y_0},\\
    0<& B^\ell_{y_0,y_\ell} (R_2)_{y_\ell,z_{\ell}}.
  \end{align*}
  Therefore $y_0 \in I_B$ and $(R_1)_{I_A \times I_B} (R_2)_{I_B
  \times I_C} = (R_3)_{I_A \times I_C}$.
\end{proof}

\begin{proof}[Proof of \Cref{thm:iso-deg-fundamental}]
  We first show injectivity.
  If two paths are homotopic in $\SSE_{\text{deg}}(Q)$
  then we can lift this homotopy by \Cref{lem:lift-nondeg}
  to $\SSE(Q)$ preserving endpoints.
  In particular for every non-degnerate square matrix $A$
  every contractible loop in
  $\SSE_{\text{deg}}(Q,A)$ is also contractible in $\SSE(Q,A)$.
  
  For surjectivity consider
  a path from $A$ to $B$ in $\SSE(Q)$.
  \Cref{lem:deg-triangulate} allows
  to homotop this path to a path
  containing only matrices without zero rows keeping endpoints fixed.
  Applying the lemma again and again we arrive
  at a path where all matrices have no zero rows
  also for all powers.
  
  Applying the lemma to the transposed matrices
  further homotops this path to one
  without zero columns.
  Doing this a finite number of times we finally arrive
  at a path in $\SSE(Q)$ homotopic to
  the original one with endpoints being fixed.

  
\end{proof}

\section{Contractability of $\PP(\Gamma,H,X)$}
\label{sec:contractability}

The aim of this section is to prove that
under additional assumptions
on the refinement structure
the coverings space $P:=\PP(\Gamma,H,X)$ is contractible.  The argument
is that of Wagoner in \cite[Proof of Proposition 2.12 Step
III]{wagonerMarkovPartitionsK21987} adapted to our notation
and with various details added. While this in principle
provides information about the group homology of
automorphism groups of subshifts of finite type, as far as we know
no concrete applications of the result are known so far. 

\begin{thm}
\label{thm:contractible}
  Let $\Gamma$ be a groupoid and let $H$ be a countable generating set
  containing all identities. If $(\Gamma,H)$ has a refinement
  structure $(\cong, (H_n)_{n \in \N}, (\pro_n)_{n \in \N})$ such that
  for all morphisms $\varphi_1,\varphi_2$
  $\varphi_1 \to \varphi_2$ together with $\varphi_2 \to \varphi_1$
  implies $\varphi_1 \cong \varphi_2$, then
  $\PP(\Gamma,H,X)$ is contractible for all objects $X$ of $\Gamma$. 
\end{thm}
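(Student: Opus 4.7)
The plan is to extend the refinement-and-contract argument used in the proof of \Cref{thm:simply-connected} from loops to higher-dimensional spheres, exploiting the countability of $H$ to arrange the refinements into a compatible tower. Write $P := \PP(\Gamma,H,X)$. Since $H$ is countable and generates $\Gamma$, the component of $X$ contains only countably many morphisms out of $X$, so I would begin by fixing an enumeration $\varphi_1, \varphi_2, \dots$ of the vertices of $P$.

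Next, I would build inductively a tower of refinements $\psi_1, \psi_2, \dots$ where $\psi_n$ plays the role of a ``common refinement'' of $\varphi_1, \dots, \varphi_n$ obtained by iterating the $\delta$ operation, together with a finite set $Z_n$ of intermediate vertices (of the form $\delta(\varphi_{i_1},\dots,\varphi_{i_k})$) connecting each $\varphi_i$ to $\psi_n$ by the same zigzag patterns that appear in \Cref{fig:contract}. The target subcomplex is the finite full subcomplex $K_n \subseteq P$ spanned by $\{\varphi_1,\dots,\varphi_n\} \cup Z_n \cup \{\psi_n\}$. The extra hypothesis enters here: it guarantees that the preorder induced by $\to$ descends to a partial order on $\cong$-classes, which forbids the refinement tower from collapsing to a cycle and ensures that each refinement strictly ``dominates'' its predecessors in the partial order, so that $\psi_n$ is well defined and the $K_n$ form a genuine exhaustion.

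Then I would show that each $K_n$ is contractible. The idea is to iterate the homotopy moves from the proof of \Cref{thm:simply-connected} one dimension at a time: a cycle of any dimension inside $K_n$ can be pushed by $\delta$-refinement towards $\psi_n$, using \eqref{eq:conditions-arrow-2-H}, \eqref{eq:conditions-arrow-3-H}, \eqref{eq:conditions-arrow-delta}, and the grouping axioms to provide the required higher simplices that fill in the homotopies. Concretely, I would exhibit a deformation retraction of $K_n$ onto the vertex $\psi_n$ by collapsing one ``layer'' of refinement at a time; at each layer the retraction is a simplicial collapse whose pieces are cones furnished by \Cref{lem:arrow-right-grouping}.

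Finally, since $\bigcup_n K_n = P$ by construction, $P$ is a filtered union of contractible subcomplexes, and hence contractible (for instance by Whitehead's theorem, since any map from a compact space into $P$ factors through some $K_n$). The main obstacle will be the middle step: in our setting the arrows of $P$ are one-directional and higher simplices do not automatically exist just because their faces do, so verifying that the zigzag contractions of \Cref{fig:contract} can be assembled into a full simplicial deformation retraction of $K_n$ requires careful bookkeeping with the $\delta$-axioms and is where the additional antisymmetry assumption becomes indispensable.
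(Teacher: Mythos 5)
Your proposal has a real gap at the very first step of the tower construction, and it stems from assuming the refinement map $\delta_n$ is total when in fact it is only partial. You propose to build a ``common refinement'' $\psi_n$ of $\varphi_1,\dots,\varphi_n$ by iterating $\delta$. But $\delta_n$ is only defined on the subset $H_n \subseteq H^n$, and the axioms of a refinement structure (in particular \eqref{eq:conditions-arrow-2-H}, \eqref{eq:conditions-arrow-3-H}, and \Cref{lem:arrow-right-grouping}) only produce membership in $H_n$ under quite specific arrow patterns such as $\varphi_1 \leftarrow \varphi_2 \to \varphi_3$ or $\psi_1,\dots,\psi_k \to \varphi$. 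There is no axiom asserting that an arbitrary finite collection of vertices of $\PP(\Gamma,H,X)$ has a common refinement — and in the motivating example (\Cref{sec:markov-shifts-fg}), $(\varphi_1,\dots,\varphi_n) \in H_n$ is the genuine extra condition that $\im(\varphi_1 \star \cdots \star \varphi_n)$ be an $S$-Markov shift, which fails for generic tuples. So the vertex $\psi_n$ need not exist, and consequently neither does the exhaustion $K_1 \subseteq K_2 \subseteq \cdots$ that your argument rests on. The extra antisymmetry hypothesis gives you a partial order on $\cong$-classes, but it gives you no joins; a poset without joins does not admit the directed tower you need.

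The paper takes a route that avoids ever asking for a refinement outside the reach of the axioms. It uses the antisymmetry hypothesis to deformation retract $P$ onto a subcomplex $U$ of $\cong$-representatives, passes to an honest ordered simplicial complex $W$, and then (having already established simple connectivity via \Cref{thm:simply-connected}) reduces contractibility to the vanishing of $H_n(W)$ for $n \geq 1$ by Whitehead's theorem. Vanishing of homology is proved by the Freudenthal subdivision operator (\Cref{thm:freudenthal-subdivision-operator}): given a cycle with finite support in $K \subseteq W$, the subdivision replaces its vertices by refinements $\tilde{\delta}(\varphi_i,\varphi_j)$ of \emph{pairs of adjacent vertices of simplices of the cycle} — exactly the situation where the arrow axioms guarantee $H_2$-membership — and iterating raises the ``rank'' of every vertex until simplices become degenerate. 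The crucial point is that one only ever refines along edges that are already present, never across the whole vertex set at once. If you want to rescue a tower-style argument, you would at minimum need to add an axiom guaranteeing directedness of the refinement poset; with the axioms as stated, the paper's subdivide-along-the-cycle strategy appears necessary.
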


Before we come to the proof of this theorem, which will make up the
rest of this section, we state a corollary for the automorphism
group of SFTs over $\Z$ or equivalently the space $\SSE(\{0,1\},A)$.

\begin{cor}
  \label{cor:classifying-space}
  Let $A$ be non-degenerate square $\{0,1\}$-matrix $A$. The space
  $\SSE(\{0,1\},A)$ is a classifying space of $\Aut(X_A)$. 
\end{cor}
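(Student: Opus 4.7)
The plan is to reduce this to Theorem \ref{thm:contractible} applied to the groupoid $\Gamma := \Lambda_{X_A}$ of Markov shifts in the connected component of $X_A$ with the set $H := H_{X_A}$ of elementary conjugacies as generators. By Theorem \ref{thm:simplicial-set-iso} the isomorphism $\NN(\Lambda,E) \cong \SSE(\{0,1\})$ restricts to an isomorphism $\NN(\Lambda_{X_A}, H_{X_A}) \cong \SSE(\{0,1\}, A)$ under which Corollary \ref{cor:sse-automorphismgroup} identifies the fundamental group with $\Aut(X_A)$. Hence it suffices to show that the universal cover of this complex is contractible, and a candidate for this cover is at hand: by Theorem \ref{thm:covering}, $\PP(\Gamma, H, X_A)$ covers $\NN(\Gamma, H)$, and combining Lemma \ref{lem:refinment-ongroups} with Theorem \ref{thm:simply-connected} it is simply connected, so it is the universal cover.

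It then remains to verify the two hypotheses of Theorem \ref{thm:contractible}: that $H_{X_A}$ is countable, and that $\varphi_1 \to \varphi_2$ together with $\varphi_2 \to \varphi_1$ forces $\varphi_1 \cong \varphi_2$. Countability is immediate from Theorem \ref{thm:algebraic-from-elementary}, as every elementary conjugacy in $\Gamma_{X_A}$ is determined by a pair $(R,S)$ of finite $\{0,1\}$-matrices, of which there are only countably many.

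The substantive step — and the only real obstacle — is the second hypothesis. Writing $\psi := \varphi_2 \circ \varphi_1^{-1}$, the assumption says $\psi \in H_{X_A}$ and $\psi^{-1} \in H_{X_A}$, so $\psi$ has neighborhood $\{0,1\}$ and, being the inverse of a map with neighborhood $\{0,1\}$, also has neighborhood $\{-1,0\}$. Let $f,g$ be the two local rules, so $\psi(x)_0 = f(x_0, x_1) = g(x_{-1},x_0)$ for every $x$ in the source Markov shift. Non-degeneracy of the defining matrix guarantees that every admissible letter $x_0$ has an admissible left neighbor $x_{-1}$; fixing such a choice, the value $g(x_{-1}, x_0)$ is a function of $x_0$ alone, and matching it against $f(x_0, x_1)$ for all admissible $x_1$ forces $f$ to be independent of its second argument. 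Thus $\psi$ is a one-block map. Symmetrically $\psi^{-1}$ is a one-block map, so $\psi$ is an alphabet bijection and $\varphi_1 \cong \varphi_2$.

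With both hypotheses checked, Theorem \ref{thm:contractible} yields contractibility of $\PP(\Gamma, H, X_A)$, and hence $\SSE(\{0,1\}, A)$ is an Eilenberg--MacLane space $K(\Aut(X_A), 1)$, i.e.\ a classifying space for $\Aut(X_A)$.
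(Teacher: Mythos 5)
Your proof is correct and follows essentially the same route as the paper's: both reduce to verifying the hypothesis of \Cref{thm:contractible} that $H\cap H^{-1}$ consists only of alphabet bijections, using non-degeneracy to realize words as biinfinite configurations and the fact that $\psi\in H\cap H^{-1}$ has both neighborhoods $\{0,1\}$ and $\{-1,0\}$. The only (harmless) differences are stylistic: you phrase the key step constructively (fix a left neighbor for each letter) while the paper argues by contradiction, and you spell out the countability hypothesis, which the paper leaves implicit.
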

\begin{proof}
  Let $H$ be the set of all conjugacies of Markov shifts over $\Z$ having neighborhood
  $\{0,1\}$ whose inverse has neighborhood $\{-1,0\}$. 
  We have to show that $H \cap H^{-1}$ consists only of alphabet
  permutations, because then the assumptions to
  \Cref{thm:contractible}
  are fulfilled. Assume that $\varphi \in H \cap H^{-1}$ is not
  an alphabet permutation.
  Let $\varphi_{\loc}$ be the local function of $\varphi$ for the
  neighborhood $\{0,1\}$.
  Assume that $\varphi$ does not have neighborhood $\{0\}$.
  Then there must be $a,b,c$ with
  $\varphi_{\loc}(a,b)\neq \varphi_{\loc}(a,c)$.
  Using the fact that $A$ is non-degenerate, we can extend the words
  $ab$ and $ac$ on both sides, generating configuration
  $x_1,x_2$ with $\varphi(x_1)_0 \neq \varphi(x_2)_0$ and
  $(x_1)_n=(x_2)_n$ for all $n \leq 0$.
  But this contradicts $\varphi$ having neighborhood $\{-1,0\}$,
  The same argument shows that $\varphi^{-1}$ has
  neighborhood $0$. Hence $\varphi$ is an alphabet permutation.
\end{proof}
\begin{rem}
  \label{rem:higher-dimension-H-H-1}
  The problem in higher dimensions is the fact that
  for the generating set $H$, as defined in \Cref{sec:markov-shifts-fg},
  $H \cap H^{-1}$ might contain more elements then just alphabet
  permutations.
  For example consider the $S$-Markov shift over $\Z^2$
  with 11 symbols \[\square, \uparrow, \downarrow, \leftarrow, \rightarrow,
  \nwarrow, \swarrow, \searrow, \nearrow, 1, 2\]
  and neighborhood $S=\{(1,0),(0,0),(0,1)\}$. where every
  $S$ pattern not appearing in \Cref{fig:subshift}
  is forbidden. As in \Cref{sec:markov-shifts-fg}
  let $H$ be the set of conjugacies having neighborhood $S$ and
  whose inverse has neighborhood $S^{-1}$.

  Consider the automorphism $\varphi$
  that exchanges $1$ and $2$ if they are surrounded by arrows.
  This is clearly not a alphabet permutation
  but it is in $H \cap H^{-1}$ as the knowledge
  of the value at a site $i$ and any of its neighbors
  determines the value of $\varphi$ at site $i$.
  \begin{figure}
\begin{center}
    \begin{tikzpicture}[scale=0.6]
\foreach \i in {-1,...,8}
{
	\draw (\i,-1) -- (\i,8);	
	\draw (-1,\i) -- (8,\i);	
}
\draw node at (1.5,1.5) {1};
\draw node at (0.5,1.5) {$\uparrow$};
\draw node at (2.5,1.5) {$\downarrow$};
\draw node at (1.5,2.5) {$\rightarrow$};
\draw node at (1.5,0.5) {$\leftarrow$};
\draw node at (2.5,0.5) {$\swarrow$};
\draw node at (0.5,0.5) {$\nwarrow$};
\draw node at (0.5,2.5) {$\nearrow$};
\draw node at (2.5,2.5) {$\searrow$};
\draw node at (5.5,1.5) {1};
\draw node at (1.5,5.5) {2};
\draw node at (5.5,5.5) {2};
\draw node at (4.5,5.5) {$\uparrow$};
\draw node at (6.5,5.5) {$\downarrow$};
\draw node at (5.5,6.5) {$\rightarrow$};
\draw node at (5.5,4.5) {$\leftarrow$};
\draw node at (6.5,4.5) {$\swarrow$};
\draw node at (4.5,4.5) {$\nwarrow$};
\draw node at (4.5,6.5) {$\nearrow$};
\draw node at (6.5,6.5) {$\searrow$};
\end{tikzpicture}
\end{center}
\caption{A subshift with an exotic automorphism in $H \cap H^{-1}$}
\label{fig:subshift}
\end{figure}

  Now one could also just define $\cong$ as
  the smallest equivalence relation containing $\to$, but then
  it is not clear if one can find $\delta$ to get a refinement structure.
\end{rem}

For the homological calculations, which we
will need for the proof of \Cref{thm:contractible}, it is easier to work with the space of unordered Markov
partitions as Wagoner does.

Assume we have a refinement structure $(\cong,(H_n)_{n \in \N},
(\delta_n)_{n \in \N}$ such that $\cong$) is the smallest equivalence
relation containing $\to$.
Pick
a representative from each equivalence class of $\cong$ and let $r(\varphi)$ be the
representative of the equivalence class of $\varphi$.
Denote the concatenation of $\delta$ followed
by $r$ by $\tilde{\delta}$.
Let $U$ be the maximal subcomplex of $P$ whose vertices are
these representatives.

\begin{lem}
  \label{lem:deformation-retract}
  The subcomplex $U$ is a deformation retract of $P$.
\end{lem}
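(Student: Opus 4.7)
My plan is to combine a simplicial retraction with a multi-stage homotopy along the zig-zag chains witnessing $\varphi \cong r(\varphi)$.

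First, I would verify that the vertex map $\varphi \mapsto r(\varphi)$ extends to a simplicial retraction $\rho\colon P \to U$. For any simplex $(\varphi_0,\dots,\varphi_n)$ of $P$ we have $\varphi_i \to \varphi_j$ for all $i<j$, and since $\varphi_i \cong r(\varphi_i)$ and $\varphi_j \cong r(\varphi_j)$, property \eqref{eq:conditions-equivalent} gives $r(\varphi_i) \to r(\varphi_j)$. Thus $(r(\varphi_0),\dots,r(\varphi_n))$ is a simplex of $U$, and $\rho$ is a retraction since it fixes its image.

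Second, because $\cong$ is generated by $\to$, for each vertex $\varphi$ of $P$ there exists a zig-zag chain $\varphi = \psi^\varphi_0, \psi^\varphi_1, \dots, \psi^\varphi_{m_\varphi} = r(\varphi)$ whose consecutive pairs are edges of $P$; choose one such chain for each $\varphi$ (trivial if $\varphi \in U$). Define intermediate maps $\rho_k\colon P \to P$ on vertices by $\rho_k(\varphi) := \psi^\varphi_{\min(k,m_\varphi)}$. Each $\rho_k$ extends to a simplicial self-map of $P$ by the same argument as in the first step, since $\rho_k(\varphi) \cong \varphi$.

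Third, for each $k \geq 1$ I would construct a homotopy from $\rho_{k-1}$ to $\rho_k$ via the prism construction on each simplex $\sigma = (\varphi_0, \dots, \varphi_n)$. The vertical prism edges $\psi^{\varphi_i}_{k-1} - \psi^{\varphi_i}_k$ are edges of $P$ by construction of the chain, and the horizontal edges are simplices of $P$ by \eqref{eq:conditions-equivalent}. Concatenating these stage homotopies on shrinking intervals (e.g.\ $[1-2^{-(k-1)},1-2^{-k}]$) produces a continuous map $H\colon P \times [0,1] \to P$ from $\mathrm{id}_P$ to $\rho$; continuity in the CW topology holds because each simplex is affected by only finitely many stages ($k \leq \max_i m_{\varphi_i}$). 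The homotopy fixes $U$ pointwise since $m_\varphi = 0$ there.

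The main obstacle is the third step: gluing the prism triangulations consistently across all simplices of $P$ when vertical edges in the chain may point in either direction. When a vertical edge $\psi^{\varphi_i}_{k-1} - \psi^{\varphi_i}_k$ has its arrow pointing the wrong way for the standard prism triangulation, I would insert an intermediate vertex such as $\delta(\psi^{\varphi_i}_{k-1}, \psi^{\varphi_i}_k)$, invoking \Cref{lem:arrow-right-grouping} together with the refinement axioms \eqref{eq:conditions-arrow-2-H}--\eqref{eq:conditions-arrow-delta} to guarantee that this midpoint has arrows to both endpoints in the required direction. Verifying that these local adjustments assemble into a globally consistent simplicial homotopy is the heart of the argument.
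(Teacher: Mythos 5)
The key observation you are missing is that no zig-zag chain is needed at all: for every vertex $\varphi$ of $P$ one already has the direct edge $\varphi \to r(\varphi)$. Indeed, take $\varphi_1 = \varphi_2 = \varphi_3 = \varphi$ and $\varphi_4 = r(\varphi)$ in \eqref{eq:conditions-equivalent}: since $\varphi \to \varphi$ (identities lie in $H$), $\varphi \cong \varphi$, and $\varphi \cong r(\varphi)$, the axiom gives $\varphi \to r(\varphi)$. More generally, for any simplex $(\varphi_0,\dots,\varphi_n)$ of $P$, inserting $r(\varphi_i)$ immediately after each $\varphi_i$ with $\varphi_i \neq r(\varphi_i)$ yields an ordered tuple all of whose required arrows hold by \eqref{eq:conditions-equivalent}, hence a single simplex of $P$ that contains both the original simplex and its image under $\rho$ as faces. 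The paper's proof is just the straight-line homotopy $\Phi_t\bigl(\sum_i \alpha_i \varphi_i\bigr) = \sum_i (1-t)\alpha_i\varphi_i + t\alpha_i r(\varphi_i)$ inside that enlarged simplex; this is affine on each cell $\sigma \times [0,1]$, agrees on faces, is the identity at $t=0$, equals $\rho$ at $t=1$, and is constant on $U$.

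Your first step (that $\rho$ is simplicial) is correct and matches the paper. Your second and third steps are not wrong in spirit, but they leave the proof genuinely unfinished: you explicitly flag that ``gluing the prism triangulations consistently'' and handling vertical edges that point the wrong way ``is the heart of the argument,'' and then you do not carry it out. This is a real gap, not just a detail, because the chains $\psi^\varphi_k$ are chosen independently for each vertex and there is no reason the resulting prisms triangulate consistently along shared faces, nor that the ad hoc insertion of $\delta$-midpoints would repair this globally. The whole multi-stage construction, together with the reliance on $\cong$ being generated by $\to$, is avoidable once one notices the single-edge fact above, which holds for any refinement structure and collapses the homotopy to one stage.
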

\begin{proof}
  We are going to define a homotopy $\Phi_t$
  from $P$ to $U$ fixing $U$.
  Our goal is to map a point 
  $p = \sum_{i=1}^{n} \alpha_i \varphi_i$
  in  $[\varphi_1,\dots,\varphi_n]$ by $\Phi_t$ to
  $\sum_{i=1} (1-t) \alpha_i \varphi_i + t \alpha_i r(\varphi_i)$.
  We only have to specify in which simplex this image should lie.
  Since we  want $\Phi_t$ to be constant on $U$,
  the right simplex to pick is the one where
  we add $r(\varphi_i)$ after every occurrence of
  $\varphi_i$ with $\varphi_i \neq r(\varphi_i)$.
  For example if $\varphi_2=r(\varphi_2)$,
  $\varphi_1 \neq r(\varphi_1)$ and $\varphi_3 \neq r(\varphi_3)$
  then $\Phi_t([\varphi_1,\varphi_2,\varphi_3]) \subseteq
  [\varphi_1,r(\varphi_1),\varphi_2,\varphi_3,r(\varphi_3)]$. This
  is indeed a simplex in $P$ since 
  $\varphi_1 \to \varphi_2$ implies $\varphi_1 \to r(\varphi_2)$
  and $r(\varphi_1) \to \varphi_2$ by the properties of $\cong$.
\end{proof}

Notice that on $U$ the relation $\rightarrow$ defines a partial order
on the vertices which is a total order when restricted to simplices.

$U$ has the structure of a simplicial set with degeneration map given
by
\begin{align*}
  [\varphi_1,\dots,\varphi_i,\dots,\varphi_n] \mapsto
  [\varphi_1,\dots,\varphi_i,\varphi_i,\dots,\varphi_n].
\end{align*}
The geometric realization of this simplicial set, where degenerate
simplices are identified with the corresponding lower dimensional
simplices, is homeomorphic to the geometric realization of the
ordered simplicial complex $W$ whose simplices are of the form
$[\varphi_1,\dots,\varphi_n]$ where $\varphi_1,\dots,\varphi_n$ are
pairwise different and $\varphi_i \to \varphi_j$ for $i < j$.

Now by \cite[Proposition 2.1]{rourkeDSETSHOMOTOPYTHEORY1971} the
geometric realization of $U$ as a simplical set and the geometric
realization of $U$ as a $\Delta$-set are homotopy equivalent. 
Since $P$, $U$ and $W$ are homotopy equivalent, it
is enough to show that $W$ is contractible.

We already know that $W$ is simply connected by \Cref{thm:simply-connected}.  By
Whiteheads theorem it is therefore enough to show that all higher
homology groups vanish.  For the computation we use simplicial
homology for simplicial complexes
as defined in \cite[Chapter 7]{rotmanIntroductionAlgebraicTopology1998}.

Given a homology class of $W$ we will find a series
of representatives that contain only simplices
whose vertices can be represented as the refinement
of more and more different vertices from a finite set.
Eventually all of these simplices will therefore be
degenerate and the homology class must be zero.

\begin{defn}
  \label{def:Ktag}
	Let $K$ be a finite subcomplex of $W$. Denote by $K'$ the subcomplex
	of $W$ whose $n$-simplices are either of the form
   \begin{enumerate}[(I)]
     \item 
	$[\tilde{\delta}
     (\varphi_{i_0},\varphi_{j_0}), \dots,
     \tilde{\delta}(\varphi_{i_n},\varphi_{j_n})]$
     with $i_k < j_k$ for all $k \in \{0,\dots,n\}$,
     $i_0  \leq \dots\leq i_n$ and $j_0\leq \dots\leq j_n$, or
     \label{enum:typei}
 \item $[
   \tilde{\delta}(\varphi_{0},\varphi_{\ell}),
   \tilde{\delta}(\varphi_{1},\varphi_{\ell}),
   \dots,
   \tilde{\delta}(\varphi_{\ell},\varphi_{\ell}),
   \tilde{\delta}(\varphi_{\ell},\varphi_{\ell+1}),
   \dots,
   \tilde{\delta}(\varphi_{\ell},\varphi_{n})]$ for some $\ell \in
   \{1,\dots,n\}$
   \label{enum:typeii}
 \end{enumerate}
   where $[\varphi_0,\dots,\varphi_n]$ is a simplex in $K$.
\end{defn}

The following theorem is based on the Freudenthal subdivision of
a simplex, see \Cref{sec:freudenthal} for a proof and more background information.
\begin{thm}\label{thm:freudenthal-subdivision-operator}
  Let $K$ be a finite subcomplex of $W$. There is map
  between chain complexes $C_n(W) \to C_n(W)$
  mapping $C_n(K)$ into $C_n(K')$ which is chain homotopic to the identity.
\end{thm}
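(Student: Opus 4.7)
The plan is to define the operator $s \colon C_n(W) \to C_n(W)$ by applying Freudenthal's subdivision simplex by simplex, and to produce the required chain homotopy to the identity via the standard prism construction, deferring the purely combinatorial content to \Cref{sec:freudenthal}. On a simplex $\sigma = [\varphi_0, \ldots, \varphi_n]$ of $W$, I label the vertex indexed by $(i,j)$ of the subdivided $\Delta^n$ (with $0 \leq i \leq j \leq n$) by $\tilde{\delta}(\varphi_i, \varphi_j)$; the top-dimensional sub-simplices of Freudenthal's subdivision are then of two kinds that match exactly the two clauses of \Cref{def:Ktag}. Staircases with $i_k < j_k$ throughout and $i_0 \leq \dots \leq i_n$, $j_0 \leq \dots \leq j_n$ yield type~(I) simplices, while those touching the diagonal $i_k = j_k$ specialise to the ``corner'' type~(II) simplices once $\tilde{\delta}(\varphi_\ell, \varphi_\ell) \cong \varphi_\ell$ (from \eqref{eq:conditions-drop-delta} together with the choice of representative $r$) is used. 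The signed sum of these sub-simplices defines $s(\sigma) \in C_n(K')$ whenever $\sigma \in C_n(K)$.

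The first thing to verify is that every labelled staircase is actually a simplex of $W$, i.e.\ that the arrows between vertex labels exist. For $i_0 \leq i_1$ and $j_0 \leq j_1$, applying \eqref{eq:conditions-arrow-delta} to $\varphi_{i_0} \to \varphi_{i_1}$ and $\varphi_{j_0} \to \varphi_{j_1}$ yields $\delta(\varphi_{i_0},\varphi_{j_0}) \to \delta(\varphi_{i_1},\varphi_{j_1})$, and \eqref{eq:conditions-equivalent} propagates the arrow through the representative selection $r$ to give the required $\tilde{\delta}(\varphi_{i_0},\varphi_{j_0}) \to \tilde{\delta}(\varphi_{i_1},\varphi_{j_1})$. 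The arrows involving the diagonal vertices $\tilde{\delta}(\varphi_\ell,\varphi_\ell) \cong \varphi_\ell$ appearing in type~(II) follow by the same method, together with \eqref{eq:conditions-drop-H}, \eqref{eq:conditions-drop-delta} and \Cref{lem:arrow-right-grouping}. That $s$ is a chain map, $\partial \circ s = s \circ \partial$, is a purely combinatorial property of Freudenthal's subdivision which I will prove once at the level of ordered abstract simplicial complexes in \Cref{sec:freudenthal} and then transport to $W$ through the labelling.

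For the chain homotopy $h \colon C_n(W) \to C_{n+1}(W)$ with $\partial h + h \partial = s - \id$, I will use the usual prism construction: triangulate $\Delta^n \times [0,1]$ so that the bottom copy of $\Delta^n$ carries the original labels $\varphi_i$, the top copy carries the Freudenthal labels $\tilde{\delta}(\varphi_i, \varphi_j)$, and the sides agree, by induction on $n$, with the previously constructed homotopy on $\partial\sigma$. The mixed arrows between an original vertex $\varphi_i$ and a refinement $\tilde{\delta}(\varphi_j, \varphi_k)$ required for the prism sub-simplices follow from \eqref{eq:conditions-arrow-2-H}, \eqref{eq:conditions-arrow-3-H} and \Cref{lem:arrow-right-grouping}, so each prism sub-simplex lies in $W$ and, whenever $\sigma \in K$, fits one of the templates of \Cref{def:Ktag}. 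I expect the main obstacle to be the bookkeeping: tracking orientation signs in the prism triangulation, arranging the induction on $n$ so that $\partial h(\sigma) + h(\partial\sigma) = s(\sigma) - \sigma$ holds on the nose, and verifying that every intermediate vertex label stays within the prescribed shapes. Once the combinatorial identities from \Cref{sec:freudenthal} are in place, the labelling translates them into the desired chain-level identity and the theorem follows.
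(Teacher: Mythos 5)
Your proposal is correct and follows essentially the same route as the paper: apply the Freudenthal edgewise subdivision simplex-by-simplex, label the barycentric pairs $(i,j)$ by $\tilde\delta(\varphi_i,\varphi_j)$, observe that the top-dimensional Freudenthal cells split exactly into the ``staircase'' (type~I) and ``corner'' (type~II) shapes of \Cref{def:Ktag}, verify the arrows via \eqref{eq:conditions-arrow-delta} and \eqref{eq:conditions-equivalent}, and get the chain homotopy from a prism-over-a-subdivision construction. The paper packages the prism homotopy as an explicit operator $\varrho$ on an auxiliary complex $K^\#$ (interpolating between $K$ and its subdivision $K^F$) and then pushes everything into $W$ by the simplicial map induced by $\tilde\delta$, rather than setting up an induction on dimension, but that is a presentational rather than a mathematical difference. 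One small overreach: the prism sub-simplices of the homotopy $h$ need only lie in $W$, not fit either template of \Cref{def:Ktag}; only the subdivision operator $s$ must land in $C_n(K')$, so the remark at the end of your last paragraph asks for more than the theorem requires (and more than is true, since those $(n{+}1)$-cells mix original and refined vertices).
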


\begin{thm}\label{thm:trivial-homology}
  For every $n\geq 1$ the simplicial homology group $H_n(W)$ is
  trivial.
\end{thm}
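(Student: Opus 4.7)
The plan is an iterated subdivision argument. Any $n$-cycle $c \in C_n(W)$ is a finite chain, hence supported in some finite subcomplex $K_0 \subseteq W$. Applying \Cref{thm:freudenthal-subdivision-operator} once yields a cycle $T(c)$ supported in $K_0'$ and homologous to $c$; iterating produces cycles $c_m := T^m(c)$ supported in $K_m := K_{m-1}'$, all homologous to $c$. The whole task then reduces to showing $c_m = 0$ for some large enough $m$, because then the iterated chain homotopy from \Cref{thm:freudenthal-subdivision-operator} exhibits $c$ itself as a boundary.

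To establish this vanishing I would track a combinatorial invariant of the vertices of the $K_m$. Using the grouping property \eqref{eq:conditions-grouping-delta} and the drop property \eqref{eq:conditions-drop-delta}, one verifies by induction on $m$ that every vertex of $K_m$ is $\cong$-equivalent, and therefore literally equal after applying the representative choice $r$, to some $\tilde\delta(\varphi_{i_1},\dots,\varphi_{i_r})$ where $\{\varphi_{i_1},\dots,\varphi_{i_r}\}$ is a subset of the vertex set of a single simplex of $K_0$. Assigning to each vertex $u$ of $K_m$ the underlying subset $S(u) \subseteq \{\varphi_0,\dots,\varphi_N\}$, I would then argue that the two subdivision patterns of \Cref{def:Ktag}, iterated, force the collection $(S(u_0),\dots,S(u_n))$ of underlying subsets of the vertices of any surviving $n$-simplex in $c_m$ to grow in a controlled way: each further subdivision step either strictly enlarges some $S(u_k)$, or the configuration is already stable at a maximal subset. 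Since $K_0$ is finite and each such subset sits inside a bounded ambient set, this stabilization must occur after finitely many steps.

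Once uniform stabilization is reached, every $n$-simplex in the support of $c_m$ has all of its $n+1$ underlying subsets equal to a single common maximal subset, so all $n+1$ vertex representatives coincide. Such a tuple is not an $n$-simplex of the ordered simplicial complex $W$, so $c_m = 0$ in $C_n(W)$ and we are done. The hard part will be the monotone-growth claim for Type~(II) simplices in \Cref{def:Ktag}, which carry a distinguished pivot vertex $\varphi_\ell$ whose naïve underlying subset is the singleton $\{\ell\}$ and thus seems resistant to enlargement. This is where the \Cref{thm:contractible} hypothesis that mutual arrows imply $\cong$ becomes crucial: it allows one to identify $r(\tilde\delta(\psi,\varphi_\ell))$ with $\varphi_\ell$ whenever $\psi$ arrow-refines $\varphi_\ell$ (using \Cref{lem:arrow-right-grouping} and \eqref{eq:conditions-arrow-delta}), which is exactly what is needed to absorb the persistent pivots into the surrounding combined refinement and reach the uniform stabilization above.
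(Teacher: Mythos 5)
Your reduction to ``iterate the Freudenthal subdivision and show the underlying subsets eventually saturate'' is attractive but has a genuine gap, and it is precisely the one you flag at the end. The persistence of the Type~(\ref{enum:typeii}) pivots is not merely ``the hard part'' of an otherwise correct argument --- it is the place where the plan breaks, and your proposed fix does not work. You claim that the extra hypothesis of \Cref{thm:contractible} (mutual arrows imply $\cong$) lets you identify $r(\tilde\delta(\psi,\varphi_\ell))$ with $\varphi_\ell$ when $\psi\to\varphi_\ell$. But \Cref{lem:arrow-right-grouping} only gives you one direction, $\tilde\delta(\psi,\varphi_\ell)\to\varphi_\ell$; to invoke the hypothesis you would also need $\varphi_\ell\to\tilde\delta(\psi,\varphi_\ell)$, which is false in general (in the Markov-partition model $\tilde\delta(\psi,\varphi_\ell)$ is a strictly finer common refinement, so the arrow does not reverse). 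Consequently your invariant $S(u)$ does not strictly increase along Type~(\ref{enum:typeii}) simplices, the pivot $\varphi_\ell$ can recur at every subdivision stage (since $\tilde\delta(\varphi_\ell,\varphi_\ell)\cong\varphi_\ell$ by \eqref{eq:conditions-drop-delta}), and the claimed termination after finitely many steps is unjustified.

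The paper handles this obstacle by an explicit coning argument after each subdivision, not by further subdivision. One defines the rank of a vertex relative to the finite starting complex $K$ and shows one can move a cycle from $K_k$ (all ranks $\geq k$) to $K_{k+1}$: apply \Cref{thm:freudenthal-subdivision-operator} once, observe Type~(\ref{enum:typei}) simplices already lie in $K_{k+1}$, and then for each surviving low-rank pivot $\varphi$ in a Type~(\ref{enum:typeii}) simplex one cones off the part of the chain containing $\varphi$ from an auxiliary vertex $\tilde\varphi=\tilde\delta(\psi_1,\dots,\psi_m,\varphi)$ (the $\psi_i$ being the vertices arrowing into $\varphi$ within that part, or dually the ones $\varphi$ arrows into). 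This $\tilde\varphi$ does have higher rank, the cone identity $\partial D_{\tilde\varphi}(\beta^+)=\beta^+-D_{\tilde\varphi}(\partial\beta^+)$ removes $\varphi$ from the cycle up to boundary, and after treating all low-rank vertices one lands in $K_{k+1}$. Iterating $|K|$ times exhausts the possible ranks and kills the cycle. So the two ingredients your proposal lacks are: (a) a rank function with respect to a fixed $K$ (rather than a single underlying-subset assignment), and (b) the coning step that actively excises bad pivots instead of hoping subdivision absorbs them.
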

\begin{proof}
  Let $[\alpha]$ be a homology class in $H_n(W)$.  There is a finite 
  subcomplex $K$ of $W$ such that all simplices of $\alpha$ are
  contained in $K$. Let $\tilde{K}$ be
  the maximal subcomplex of $W$ containing all vertices of the form
  $\tilde{\delta}(\varphi_1,\dots,\varphi_\ell)$ for
  $\varphi_1,\dots,\varphi_\ell \in K$, $\ell \in \N$.
  
  For a vertex $\varphi$ in $\tilde{K}$ let $k$ be the
  maximal number such that
  $\varphi \cong \tilde{\delta}(\varphi_1,\dots,\varphi_k)$ for pairwise
  different vertices $\varphi_1,\dots,\varphi_k$ in $K$.  We call $k$
  the \emph{rank} of $\varphi$ with respect to $K$.
	
  Let $K_n$ be the maximal subcomplex of $\tilde{K}$ such that the rank of every
  vertex is at least $n$.
	
  The result now follows directly from the following two claims.
	\begin{enumerate}[(1)]
		\item 	$K_{|K|+1}$ is empty.
        \label{enum:claim-homology-1}
      \item For every cycle $\alpha$ in $K_{k}$ with $1\leq k$ there
        is a cycle $\alpha'$ in $K_{k+1}$ representing the same
        homology class as $\alpha$ in $H_n(W)$.
        \label{enum:claim-homology-2}
	\end{enumerate}
   The first claim follows directly from the definition of rank.
	For the second claim assume $\alpha$ is a cycle
   supported in $K_{k}$. By
   \Cref{thm:freudenthal-subdivision-operator}
   we can find $\beta \in K_k'$ in the same homology class as $\alpha$.
	Recall that there are two types of simplices in $K'_k$.
   Simplices of type \ref{enum:typei} are good as
   they are already contained in $K_{k+1}$. We just have to get
   rid of the simplices of type \ref{enum:typeii}.
	Let $V$ be the set of vertices of simplices appearing in $\beta$
   whose rank is at most $k$. They can only appear
   in simplices of type \ref{enum:typeii} and each of
   these simplices contains at most one such vertex. Let $\varphi \in V$.
   For a chain $\gamma$ denote by $\gamma^+$
   the part of the chain consisting only of simplices containing
   $\varphi$ and let $\gamma^-$ be the part of the chain consisting
   only of simplices not containing $\varphi$. We then get a
   decomposition $\gamma=\gamma^+ + \gamma^-$.

   Let $I$ be the set of all
   vertices $\psi$ occurring in simplices of $\beta^+$ with
   $\psi \to \varphi$ and $\psi \neq \varphi$.  Similarly let $T$
   be the set of all vertices $\psi$ occurring in simplices of
   $\beta^+$ with $\varphi \to \psi$ and
   $\psi \neq \varphi$.
	
	We have to differentiate between two cases.
	
	Case 1: $I \neq \emptyset$.  Let
   $\tilde{\varphi}=\tilde{\delta}(\psi_1,\dots,\psi_m,\varphi)$ for
   $I = \{\psi_1,\dots,\psi_m\}$. This is
   defined by  by \Cref{lem:arrow-right-grouping}.
   Since every simplex in $\beta$ contains at
   most one vertex of rank $n$ and all other vertices have larger
   rank, the rank of $\tilde{\varphi}$ is larger then $n$. Define maps
   $D_{\tilde{\varphi}}: C_n(\supp({\beta}^+)) \to C_{n+1}(W)$ and
   $D_{\tilde{\varphi}}: C_{n-1}(\supp(\partial(\beta^+))) \to
   C_{n}(W)$, by
   $D_{\tilde{\varphi}}([\varphi_1,\dots,\varphi_k]) =
   [\tilde{\varphi},\varphi_1,\dots,\varphi_k]$.  
   Next we show that there actually is such a simplex
   $[\tilde{\varphi},\varphi_1,\dots,\varphi_k]$ in $W$.
   Let
   \[[\tilde{\delta}(\varphi_0,\varphi_\ell),\dots,
     \tilde{\delta}(\varphi_{\ell},\varphi_{\ell}),
   \tilde{\delta}(\varphi_{\ell},\varphi_{\ell+1}),
   \dots,
   \tilde{\delta}(\varphi_{\ell},\varphi_{n})]\]
 
   be a simplex of type \ref{enum:typeii} appearing in $\beta$ with
   $\varphi_\ell=\varphi$.
   For $0\leq k<\ell$ we have $\tilde{\delta}(\varphi_k,\varphi_\ell) \in I$
   and therefore $\tilde{\varphi} \to
   \tilde{\delta}(\varphi_k,\varphi_\ell)$.
   For $\ell<k\leq n$ we have $\varphi=\varphi_\ell \to
   \tilde{\delta}(\varphi_\ell,\varphi_k)$
   and thus again $\tilde{\varphi} \to
   \tilde{\delta}(\varphi_\ell,\varphi_k)$.

   A simple calculation  gives
	\begin{align*}
	\partial D_{\tilde{\varphi}} (\beta^+)
	= \beta^+ - D_{\tilde{\varphi}} (\partial (\beta^+)).
	\end{align*}
	Now $(\partial (\beta^+))^+$ must be zero as
   $\partial(\beta)=0$ and
   $(\partial(\beta))^+ = (\partial(\beta^+))^+$.
   Hence $(D_{\tilde{\varphi}}(\partial(\beta^+)))^+=(D_{\tilde{\varphi}}(\partial(\beta^+)^+))=0$
   and $D_{\tilde{\varphi}}(\partial(\beta^+))=(D_{\tilde{\varphi}}(\partial(\beta^+)))^-$.
   Therefore
   $\beta$ is in the same homology class as
   $\beta^- + (D_{\tilde{\varphi}} (\partial({\beta}^+)))^-$.  This
   cycle does not contain any simplex containing $\varphi$ and every
   simplex in $(D_{\tilde{\varphi}}(\partial(\beta^+)))^-$ has rank at
   least $k+1$.  Replace $\beta$ by
   $\beta^-+(D_{\tilde{\varphi}} (\partial(\beta^+)))^-$.  Now
   repeat this process for all vertices $\varphi \in V$.  The
   resulting cycle contains only vertices of rank $k+1$.
	
	Case 2: $I=0$. In the case $T \neq 0$ and instead of adding a
   vertex to the beginning of simplices we add it to the end. More
   precisely, repeat the argument of Case 1 using
   $\tilde{\varphi} := \delta(\psi_1,\dots,\psi_m,\varphi)$ for
   $\{\psi_1,\dots,\psi_m\}=T$ and
   $D_{\tilde{\varphi}}([\varphi_1,\dots,\varphi_n]):=(-1)^n
   [\varphi_1,\dots,\varphi_n,\tilde{\varphi}]$.
\end{proof}
\appendix

\section{The Freudenthal Subdivision}
\label{sec:freudenthal}
The Freudenthal sudivision of a simplex is based on the following
idea. First we subdivided the cube $[0,1]^n$ into the simplices of the form
\begin{align*}
  \{x \in [0,1]^n \setsep 0 \leq x_{\pi(1)} \leq \dots \leq x_{\pi(n)} \leq 1\} 
\end{align*}
where $\pi$ varies over all permutations of $\{1,\dots,n\}$.
We lift this to a subdivision of $\R^n$ be subdividing all
lattice cubes $z+[0,1]^n, z \in \Z^n$ in the same way.
Finally this subdivision induces a subdivision of the simplex
\begin{align*}
  \simplex_n := \{x \in [0,2]^n \setsep 0 \leq x_n \leq \dots \leq x_1 \leq 2\}
\end{align*}
and this is the subdivision we are looking for.
It was introduced by Freudenthal in
\cite{freudenthalSimplizialzerlegungenBeschrankterFlachheit1942},
and reappeared in several contexts, see
for example \cite{edelsbrunnerEdgewiseSubdivisionSimplex2000},
\cite{bokstedtCyclotomicTraceAlgebraic1993}
and \cite{brunSubdivisionsToricComplexes2005}.

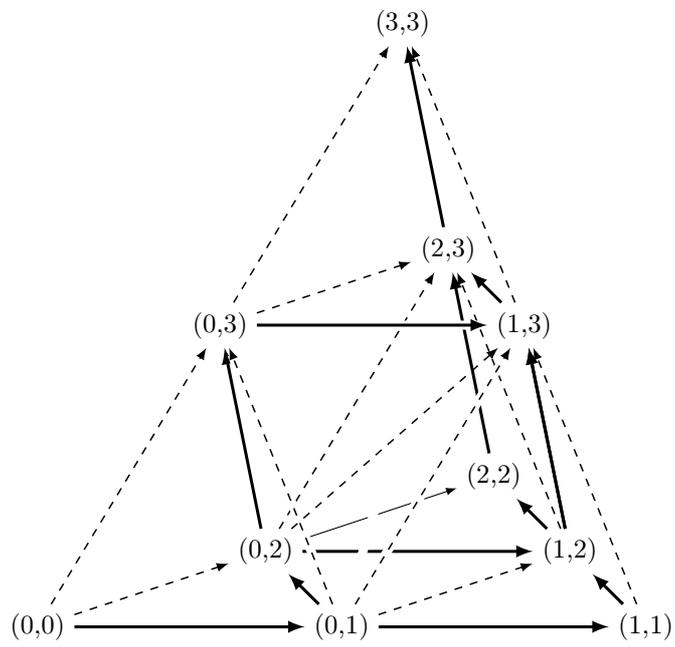
\begin{figure}
\begin{center}
\begin{tikzpicture}[scale=4.0,
  mainline/.style = {line width = 0.4mm,-latex},
  whiteline/.style = {line width = 1.2mm, white, shorten <=0.1cm,shorten >=0.1cm},
  thinline/.style = {line width = 0.2mm, dashed,-latex},
  ]
  \node (00) at (0,0) {(0,0)};
  \node (11) at (2,0) {(1,1)};
  \node (22) at (1.5,0.5) {(2,2)};
  \node (33) at (1.2,2) {(3,3)};
  \node (01) at ($(00)!0.5!(11)$) {(0,1)};
  \node (02) at ($(00)!0.5!(22)$) {(0,2)};
  \node (03) at ($(00)!0.5!(33)$) {(0,3)};
  \node (12) at ($(11)!0.5!(22)$) {(1,2)};
  \node (13) at ($(11)!0.5!(33)$) {(1,3)};
  \node (23) at ($(22)!0.5!(33)$) {(2,3)};
  \draw[mainline] (00) -- (01);
  \draw[mainline] (01) -- (11);
  \draw[thinline] (00) -- (02);
  \draw[thin, -latex] (02) -- (22);
  \draw[mainline] (11) -- (12);
  \draw[mainline] (12) -- (22);
  \draw[mainline] (22) -- (23);
  \draw[mainline] (23) -- (33);
  \draw[thinline] (11) -- (13);
  \draw[thinline] (13) -- (33);
  \draw[thinline] (00) -- (03);
  \draw[thinline] (03) -- (33);
  \draw[mainline] (02) -- (03);
  \draw[mainline] (01) -- (02);
  \draw[mainline] (13) -- (23);
  \draw[mainline] (12) -- (13);
  \draw[mainline] (12) -- (13);
  \draw[mainline] (02) -- (12);
  \draw[thinline] (02) -- (23);
  \draw[thinline] (12) -- (23);
  \draw[whiteline] (01) -- (03);
  \draw[thinline] (01) -- (03);
  \draw[thinline] (03) -- (23);
  \draw[thinline] (01) -- (12);
  \draw[whiteline] (01) -- (13);
  \draw[thinline] (01) -- (13);
  \draw[whiteline] (01) -- (23);
  \draw[thinline] (02) -- (13);
  \draw[whiteline] (03) -- (13);
  \draw[mainline] (03) -- (13);
\end{tikzpicture}
\end{center}
\caption{Freudenthal subdivision of the $3$-dimensional simplex, every
  directed path of length three of thick edges, of which no two are in
  a line, determines one cell of the subdivision}
\label{fig:simplex-freudenthal}
\end{figure}

Formally, let $V:=\{(i,j) \in \{0,\dots,n\}^2 \setsep i \leq j\}$ and
define a map
\begin{align*}
  \vartheta: V \to \Z^n,\quad \vartheta(i,j)_k =
  \begin{cases}
    2 & \text{for } k \leq i \\
    1 & \text{for } i < k \leq j \\
0 & \text{for } j < k 
  \end{cases}.
\end{align*}
So for example
\begin{align*}
  \vartheta(0,0)&=(0,\dots,0),\\
  \vartheta(1,3)&=(2,1,1,0,\dots,0) \text{ and }\\
  \vartheta(1,n)&=(2,1,\dots,1).
\end{align*}
This map is clearly injective.
Notice that the points $\theta(k,k)$ for $k \in \{1,\dots,n\}$
are precisely the vertices of $\simplex_n$
and that $\vartheta(i,j)$ is the midpoint of $\vartheta(i,i)$ and $\vartheta(j,j)$.

Now let $\simplex_n^F$ be the simplicial complex generated by
all simplices $[v_0,\dots,v_n]$ with $v_i \in \{0,1,2\}^n \cap \simplex_n$
and
\begin{align}\label{eq:permutation-freudenthal}
      v_{i}=v_{i-1} + e_{\pi(i)}
\end{align}
for all $i \in \{1,\dots,n\}$ where
$\pi$ is a permutation of $\{1,\dots,n\}$.
We also denote the set of $n$-dimensional simplices in this subdivision
by $\simplex_n^F$.

Define
\begin{align*}
  \sgn [v_0,\dots,v_n]=\sgn(\pi) &= \det
  (v_1-v_0,v_2-v_1,\dots,v_n-v_{n-1})\\
  &=\det (v_1-v_0,v_2-v_0,\dots,v_n-v_0).
\end{align*}
Such a simplex is uniquely determined by $v_0$ and $\pi$.
The only restriction on $v_0$ is $v_0 \in \{0,1\}^n$
but depending on $v_0$ not every permutation $\pi$ produces vertices
$v_i \in \simplex_n$. The complex
$\simplex^F_n$ is the so called \emph{Freudenthal subdivision} of the simplex $\simplex_n$.
See \Cref{fig:simplex-freudenthal} for an illustration
of $\simplex^F_3$.


The inclusion maps of the $k$-th face of $\simplex_n$ are explicitly given by
doubling the $k$-th coordinate, i.e., 
\begin{align*}
  d_k&: \simplex_{n} \to \simplex_{n+1},\\
  d_k(x_1,\dots,x_n)&=(x_1,\dots,x_k,x_k,\dots,x_n) \text{ for } 1\leq
  k \leq n, \\
  d_0(x_1,\dots,x_n)&=(2,x_1,\dots,x_n),\\
  d_{n+1}(x_1,\dots,x_n)&=(x_1,\dots,x_n,0).
\end{align*}

Based on this subdivision of $\simplex_n$ we are now subdividing an
(abstract) ordered simplicial complex $K$.
Let $K^F$ be the simplicial complex given by
simplices of the form $[(v_{i_0},v_{j_0}), \dots,(v_{i_m},v_{j_m})]$
where $[\vartheta(i_0,j_0),\dots,\vartheta(i_m,j_m)]$ is
a simplex in $\simplex_n^F$ and $[v_0,\dots,v_n]$ is a simplex in $K$.
In particular we must have $i_k \leq j_k$ for all $k \in
\{1,\dots,m\}$,
$i_0\leq\dots\leq i_m$ and $j_0\leq\dots\leq j_m$.
For a simplex $T=[(\vartheta(i_0,j_0),\dots,\vartheta(i_n,j_n)] \in
\simplex_n^F$ and a simplex $[v_0,\dots,v_n] \in K$
define
$\tau_{v_0,\dots,v_n}(T):=[(v_{i_0},v_{j_0}),\dots,(v_{i_n,j_n})] \in K^F$.

To construct a chain homotopy between the identity and the
subdivision, which we need for the proof of \Cref{thm:freudenthal-subdivision-operator},
we also construct the simplicial complex $K^\#$ whose simplices of
maximal dimension are of
the form
\[[v_{i_0},\dots,v_{i_k},(v_{i_{k}},v_{j_{k}}),\dots,(v_{i_m},v_{j_m})]\] for $k \in
\{0,\dots,m\}$ where $[(v_{i_0},v_{j_0}),\dots,(v_{i_m},v_{j_m})] \in
K^F$ and $v_0,\dots,v_k$ are pairwise different.
Both $K$ and $K^F$ are subcomplexes of $K^\#$.

Now define a family of maps $F: C_m(K) \to C_m(K^\#)$ by
\begin{align}
  [v_0,\dots,v_m] &\mapsto
                    \sum_{T \in \simplex^F}
                    \sgn(T) \tau_{v_0,\dots,v_m}(T).
  \label{eq:F}
\end{align}
Next we show that $F$ is a chain map, i.e., that it intertwines
with the boundary map.
We have
\begin{align*}
  \partial F([v_0,\dots,v_m]) &= \sum_{T \in \simplex_m^F} 
                                \sgn(T) \partial \tau_{v_0,\dots,v_m}(T), \\
  F\partial ([v_0,\dots,v_m]) &=\sum_{k=0}^m (-1)^k\sum_{S \in \simplex_{m-1}^F}
                                \sgn(S)
                                \tau_{v_0,\dots,\hat{v_k},\dots,v_m}(S)
  \\
                              &=\sum_{k=0}^m (-1)^k\sum_{S \in \simplex_{m-1}^F}
                                \sgn(S)
                                \tau_{v_0,\dots,v_m}(d_k(S)).
\end{align*}                       
It is therefore enough to show
\begin{align}
  \sum_{T \in \simplex_m^F} \sgn(T) \partial T=\sum_{k=0}^m (-1)^k \sum_{S \in \simplex_{m-1}^F}
                                \sgn(S)
                                d_k(S). \label{eq:chain-map}
\end{align}

We now expand the left hand side as
\begin{align*}
  \sum_{T \in \simplex_m^F} \sgn(T) \partial T=\sum_{[w_0,\dots,w_m] \in \simplex_m^F}
  \sgn([w_0,\dots,w_m])
  \sum_{\ell}
  (-1)^\ell
  [w_0,\ldots,
  \widehat{w_\ell},\dots,
  w_m].
\end{align*}
A simplex $[w_0,\ldots,
  \widehat{w_\ell},\dots,
  w_m]$
  appears exactly once in this sum if
  \begin{enumerate}[(a)]
   \item $0<\ell<m$ and $w_{\ell+1}-w_{\ell-1} = e_k+e_{k+1}$ for some
     $k \in \{1,\dots,m-1\}$, or
   \item $\ell=0$ and one of the entries of $w_1$ equals $2$, or 
   \item $\ell=m$ and one of the entries of $w_{m-1}$ equals $0$.
   \end{enumerate}
   Otherwise $[w_0,\ldots, \widehat{w_\ell},\dots, w_m]$ appears twice
   with opposite signs and hence these terms cancel out. For example
   if $w_1$ contains only $0$ and $1$ as entries, then
\begin{align*}
[\widehat{w_0},w_1,\ldots, w_m] = [w_1,\ldots,w_m,
  \widehat{w_{m+1}}] \text{ for }w_{m+1}:=w_1+e_1+\dots+e_m
\end{align*}
and $\sgn([w_0,\dots,w_m])=(-1)^m\sgn([w_1,\dots,w_{m+1}])$,
hence \[(-1)^0 \sgn([w_0,\dots,w_m]) = - (-1)^{m+1} \sgn([w_1,\dots,w_{m+1}]).\]
For a simplex $[w_0,\dots,\widehat{w_\ell},\dots,w_m]$ with
$0<\ell<m$ and $w_{\ell+1}-w_{\ell-1}=e_k+e_{k+1}$
the vectors $w_0,\dots,w_{\ell-1},w_{\ell+1},\dots,w_m$
all have the property that there $k$-th and $k+1$-th entry agree, and hence
$[w_0,\dots,\widehat{w_\ell},\dots,w_m]$
appears as a summand of the form $d_k(T)$ on the right hand side of \eqref{eq:chain-map}.
In a simplex $[\widehat{w_0},w_1\dots,w_m]$ with a $0$-entry
in $w_1$, all the vectors $w_1, \dots, w_n$ must be
$0$ in the last coordinate, hence they appear as a
summand $d_m(T)$ on the right hand side of \eqref{eq:chain-map}.
Finally every simplex $[w_0,\dots,w_{m-1},\widehat{w_m}]$
appears as a summand of the form $d_0(T)$ on the right hand side of
\eqref{eq:chain-map}. Checking the signs, this shows that both sides of \eqref{eq:chain-map} are equal.

Define a map $\varrho: C(K^F) \to C(K^\#)$ via
\begin{align}
  [(v_{i_0},v_{i_1}),\dots,(v_{i_m,j_m})] \mapsto
  \sum_{k_0}^m (-1)^k [v_{i_0},\dots,v_{i_k},(v_{i_k},v_{j_k}),\dots,(v_{i_m},v_{j_m})]
  \label{eq:rho}
\end{align}
Remember that all simplices with repeated vertices are trivial in $C(K^\#)$.

The chain map $F$ is chain homotopic to the identity
$\id: C(K) \to C(K^\#)$ via the map given by $\varrho \circ F$.
To see this we first compute $\partial \circ \varrho + \varrho \circ
\partial$. Under this map
\begin{align*}
   [(v_{i_0},v_{j_0}), \dots,(v_{i_m},v_{j_m})]
   \mapsto
   &\sum_{k} \sum_{\ell \leq k}
   (-1)^{k+\ell}
   [v_{i_0},\dots,\hat{v}_{i_\ell},\dots,,v_{i_k},(v_{i_k},v_{j_k}),\dots,(v_{i_m},v_{j_m})] \\
   +&\sum_{k}
   \sum_{\ell \geq k} (-1)^{k+\ell+1}
   [v_{i_0},\dots,v_{i_k},(v_{i_k},v_{j_k}),\dots,
   \widehat{(v_{i_\ell},v_{j_\ell})},\dots,(v_{i_m},v_{j_m})] \\
   + &\sum_{\ell} \sum_{\ell <  k}
   (-1)^{k+1+\ell}
   [v_{i_0},\dots,\hat{v}_{i_\ell},\dots,,v_{i_k},(v_{i_k},v_{j_k}),\dots,(v_{i_m},v_{j_m})] \\
   +&\sum_{\ell}
   \sum_{k > \ell} (-1)^{k+\ell}
  [v_{i_0},\dots,v_{i_k},(v_{i_k},v_{j_k}),\dots,
  \widehat{(v_{i_\ell},v_{j_\ell})},\dots,(v_{i_m},v_{j_m})]  \\
      =&\sum_{k} 
   [v_{i_0},\dots,v_{i_{k-1}},(v_{i_k},v_{j_k}),\dots,(v_{i_m},v_{j_m})] \\
   -&\sum_{k}
   [v_{i_0},\dots,v_{i_k},(v_{i_{k+1}},v_{j_{k+1}}),\dots,(v_{i_m},v_{j_m})]
  \\
  =&[(v_{i_0},v_{j_0}),\dots,(v_{i_m,j_m})]-[v_{i_0},\dots,v_{i_m}] 
\end{align*}

Now
$[\vartheta(i_0,j_0),\dots,\vartheta(i_m,j_m)]=[\vartheta(0,0),\dots,\vartheta(m,m)]$
is the unique
simplex for which $i_0,\dots,i_m$ are pairwise different. The sign of
this simplex is $1$. Hence
\begin{align*}
  (\partial\circ \varrho \circ F + \varrho \circ F\circ \partial)
  ([v_0,\dots,v_m])
  &=(\partial\circ \varrho  + \varrho \circ \partial)(F([v_0,\dots,v_m])) \\
  &=F([v_0,\dots,v_m])-[v_0,\dots,v_m].
\end{align*}

This shows that $F$ and $\id$ are chain homotopic. We now return to
the simplicial complex $W$ from \Cref{sec:contractability}.
Let $K$ be a finite subcomplex of $W$.
 The map $\tilde{\delta}$ induces a simplicial map from $W^\# \to W$
given by $\varphi \mapsto \varphi$ and $(\varphi_1,\varphi_2) \mapsto \tilde{\delta}(\varphi_1,\varphi_2)$.
This expression is defined since for a vertex $(\varphi_1,\varphi_2) \in W^\#$
we have $\varphi_1 \to \varphi_2$, hence $(\varphi_1,\varphi_2) \in H_2$.
It is a simplicial map since
every simplex in $W^\#$ is of the form
$[\varphi_{i_0},\dots,\varphi_{i_k},(\varphi_{i_{k}},\varphi_{j_{k}}),
\dots,(\varphi_{i_n},\varphi_{j_n})]$ where
$[\varphi_1,\dots,\varphi_p]$ is a simplex in $W$,
$i_0\leq i_1\leq \dots \leq i_n$ and $j_{k+1} \leq j_{k+2}\leq
\dots\leq j_m$ and $i_\ell \leq j_\ell$, 
hence
\begin{align*}
  \varphi_{i_\ell} &\to \varphi_{i_m} && \text{for } 0\leq \ell<m\leq k,\\
  \varphi_{i_\ell} &\to
                     \tilde{\delta}(\varphi_{i_{m}},\varphi_{j_{m}})
                                      && \text{for } 0\leq \ell\leq k <  m \leq n,\\
  \tilde{\delta}(\varphi_{i_\ell},\varphi_{j_\ell})
                   &\to
                     \tilde{\delta}(\varphi_{i_{m}},\varphi_{j_{m}})
                     && \text{for } k<\ell<m\leq n.
\end{align*}  
This furthermore induces a map of chain complexes $D: C_n(W^\#) \to C_n(W)$.
Restricted to $W$ it is the identity.

Clearly every simplex in $\simplex_n^F$ contains at most one vertex of
$\simplex_n$ and every vertex of $\simplex_n$ is contained in
precisely one simplex of $\simplex_n^F$.  These simplices are of the
form
$[\vartheta(0,j),\dots,
\vartheta(j-1,j),
\vartheta(j,j),
\vartheta(j,j+1),\dots,
\vartheta(j,n)]$.
The image of $C_n(W^F)$ under $D$ is therefore contained in $C_n(W')$ (see \Cref{def:Ktag}).
Composing $F: C_n(W) \to C_n(W^\#)$ with $D$ thus gives a map
from $C_n(W) \to C_n(W)$ which maps $C_n(K)$ to $C_n(K')$ and which is chain-homotopic to
the identity. We thus proved \Cref{thm:freudenthal-subdivision-operator}.

\bibliographystyle{alpha}
\bibliography{references}
\end{document}